\def\E{\mathbb{E}}
\def\E{\mathbb{E}}
\def\eps{\epsilon}
\def\1{\mathbf{1}}
\def\lam {\lambda}
\def\Lam{\Lambda}
\def\tce{t_c + \eps}
\def\tce2{t_c + \frac{\eps}{2}}
\newtheorem*{theorem*}{Theorem}
\newtheorem{theorem}{Theorem}
\newtheorem{lemma}[theorem]{Lemma}
\newtheorem{cor}[theorem]{Corollary}
\newtheorem*{prop*}{Proposition}
\newtheorem{conj}{Conjecture}
\newtheorem{claim}[theorem]{Claim}
\newtheorem{question}{Question}
\begin{document}
\title{Counting independent sets in cubic graphs of given girth}
\author{Guillem Perarnau}
\author{Will Perkins}
\thanks{WP supported in part by EPSRC grant EP/P009913/1.}
\address{University of Birmingham}
\email{g.perarnau@bham.ac.uk,math@willperkins.org}
\date{\today}

\subjclass[2010]{Primary 05C69; Secondary 05C31, 05C35}
\keywords{Independent sets, independence polynomial, hard-core model, Petersen graph, Heawood graph, occupancy fraction}

\begin{abstract}
We prove a tight upper bound on the independence polynomial (and total number of independent sets) of cubic graphs of girth at least $5$.  The bound is achieved by unions of the Heawood graph, the point/line incidence graph of the Fano plane.  

We also give a tight lower bound on the total number of independent sets of triangle-free cubic graphs. This bound is achieved by unions of the Petersen graph.

We conjecture that in fact all Moore graphs are extremal for the scaled number of independent sets in regular graphs of a given minimum girth, maximizing this quantity if their girth is even and minimizing if odd. The Heawood and Petersen graphs are instances of this conjecture, along with complete graphs, complete bipartite graphs, and cycles.  
\end{abstract}

\maketitle

\section{Independent sets in regular graphs}

A classic theorem of Kahn~\cite{kahn2001entropy} states that a union of $n/2d$ copies of the complete $d$-regular bipartite graph ($K_{d,d}$) has the most independent sets of all $d$-regular bipartite graphs on $n$ vertices.  Zhao~\cite{zhao2010number} extended this to all $d$-regular graphs.  A result of Galvin and Tetali~\cite{galvin2004weighted} for bipartite graphs combined with Zhao's result shows that maximality of $K_{d,d}$ holds at the level of the {\em independence polynomial},
\begin{equation}
P_G(\lam) = \sum_{I \in \mathcal I(G)} \lam^{|I|},
\end{equation}
where $\mathcal I(G)$ is the set of all independent sets of $G$.
\begin{theorem}[Kahn, Galvin--Tetali, Zhao~\cite{kahn2001entropy,galvin2004weighted,zhao2010number}]
\label{thm:KddPart}
For every $d$-regular graph $G$ and all $\lam>0$,
\begin{equation}
\label{eq:kahn}
\frac{1}{|V(G)|} \log P_G(\lam) \le \frac{1}{2d} \log P_{K_{d,d}}(\lam).
\end{equation}
\end{theorem}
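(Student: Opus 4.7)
The plan is to prove (\ref{eq:kahn}) in two stages, matching the three cited works. First I would establish the bound for bipartite $d$-regular graphs via an entropy argument (Kahn at $\lam=1$, extended to all $\lam>0$ by Galvin--Tetali), and then remove the bipartite hypothesis using Zhao's bipartite double cover trick.

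For the bipartite stage, let $G=(A\cup B,E)$ be bipartite and $d$-regular and draw a random independent set $\mathbf I$ from the hard-core measure at fugacity $\lam$, so that
\[
\log P_G(\lam) \;=\; H(\mathbf I) + (\log\lam)\,\E|\mathbf I|,
\]
an identity obtained by expanding the definition of entropy of $\mathbf I$. I would split $H(\mathbf I)=H(\mathbf I_A)+H(\mathbf I_B\mid \mathbf I_A)$, estimate $H(\mathbf I_A)$ by a Shearer-type inequality conditioning on second neighbourhoods in $A$, and apply the chain rule to $H(\mathbf I_B\mid \mathbf I_A)$. Both halves decompose into $|V(G)|$ local contributions, one per vertex, each depending only on the marginal law of $\mathbf I$ on a single star $\{v\}\cup N(v)$. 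A local convex inequality then bounds each contribution by its value on the corresponding star of $K_{d,d}$; summing the $|V(G)|$ local bounds and adding back $(\log\lam)\E|\mathbf I|$ yields (\ref{eq:kahn}) with equality on disjoint unions of $K_{d,d}$.

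For the reduction, given an arbitrary $d$-regular $G$, I would form the bipartite double cover $G\times K_2$ on $V(G)\times\{0,1\}$ with edges $(u,0)(v,1)$ for each $uv\in E(G)$; this graph is bipartite, $d$-regular, on $2|V(G)|$ vertices. Zhao's key inequality reads
\[
P_{G\times K_2}(\lam) \;\ge\; P_G(\lam)^2,
\]
which I would prove by a symmetric-difference argument: for $(I_1,I_2)\in\mathcal I(G)^2$, every edge of $G[I_1\triangle I_2]$ joins $I_1\setminus I_2$ to $I_2\setminus I_1$, so its components are bipartite, and one can canonically split the symmetric difference across the two sides of the cover while placing $I_1\cap I_2$ on both sides, producing an independent set of $G\times K_2$ of the same total weight $\lam^{|I_1|+|I_2|}$. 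Combining this with the bipartite case applied to $G\times K_2$ gives
\[
2\log P_G(\lam)\le \log P_{G\times K_2}(\lam)\le \frac{2|V(G)|}{2d}\log P_{K_{d,d}}(\lam),
\]
which rearranges to (\ref{eq:kahn}).

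The technical heart is the local entropy bound in the bipartite stage: the Shearer-type decomposition and the local convex inequality must be designed so that both are saturated simultaneously by $K_{d,d}$ and remain sharp at every value of $\lam$, not only $\lam=1$. Kahn solved this in the counting case via a Lagrange-multiplier optimisation on distributions over stars; the Galvin--Tetali extension to the full independence polynomial is the decisive technical input, since the optimiser and the identification of the tight case both depend nontrivially on $\lam$. The Zhao reduction is comparatively soft, but one must commit to a canonical tie-break on components of $G[I_1\triangle I_2]$ to turn the symmetric-difference map into a genuine weight-preserving injection.
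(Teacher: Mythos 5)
Your sketch is a correct outline of the standard proof from the cited works: Kahn's entropy argument and the Galvin--Tetali weighted extension for bipartite $G$, followed by Zhao's bipartite double cover $G\times K_2$ together with the weight-preserving injection $\mathcal I(G)^2\hookrightarrow\mathcal I(G\times K_2)$ to drop bipartiteness. One small imprecision: in the bipartite stage Kahn's Shearer step covers $A$ by the \emph{first} neighbourhoods $N(b)$, $b\in B$ (each $a\in A$ lies in exactly $d$ of them), rather than by second neighbourhoods, but the rest of your description of the entropy decomposition and the per-star convexity bound is the right shape, and your remark that the double-cover injection preserves $|I_1|+|I_2|$ is precisely what lets the counting argument pass to the independence polynomial. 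The paper, however, does not take this route: it cites Theorem~\ref{thm:KddPart} and then observes that it is a corollary of the strictly stronger occupancy-fraction inequality of Theorem~\ref{thm:KddOcc}, obtained by integrating $\alpha_G(t)/t$ from $0$ to $\lambda$ using $\log P_G(0)=0$. That integration route is the one the paper builds on, since the occupancy-fraction bound is itself proved by a local linear-programming relaxation which is the method extended throughout the paper, and it yields pointwise control of $\alpha_G(\lambda)$ for every $\lambda$, not merely the integrated inequality~(\ref{eq:kahn}). Conversely, the entropy plus double-cover argument you sketch is more portable: for instance it generalizes to homomorphism counts $\mathrm{Hom}(G,H)$ for bipartite $G$, a setting the occupancy-fraction method does not immediately reach.
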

 The result on the number of independent sets in a regular graph is recovered by taking $\lam=1$ and noting that the independence polynomial is multiplicative over taking disjoint unions of graphs. 

The function $P_G(\lam)$ is also known as the partition function (the normalizing constant) of the {\em hard-core model} from statistical physics.  The hard-core model is a probability distribution over the independent sets of a graph $G$, parametrized by a positive real number $\lam$, the {\em fugacity}.  The distribution is given by:
\[ \Pr [I] = \frac{\lam^{|I|}}{ P_G(\lam)} .\]

The  derivative of $\frac{1}{|V(G)|} \log P_G(\lam) $ has a nice probabilistic interpretation: it is the {\em occupancy fraction}, $\alpha_G(\lam)$, the expected fraction of vertices of $G$ in the random independent set drawn from the hard-core model:
\begin{align*}
\alpha_G(\lam) &:= \frac{1}{|V(G)|} \E |I| \\
&= \frac{ \lam P_G'(\lam)  }{|V(G)| \cdot P_G(\lam)} \\
&= \frac{\lam }{|V(G)|}  (\log P_G(\lam))'.
\end{align*}
Davies, Jenssen, Perkins and Roberts recently gave a strengthening of Theorem~\ref{thm:KddPart}, showing that~\eqref{eq:kahn} holds at the level of the occupancy fraction.  
\begin{theorem}[Davies, Jenssen, Perkins, Roberts~\cite{davies2015independent}]
\label{thm:KddOcc} 
For every $d$-regular graph $G$ and all $\lam>0$,
\begin{equation}
\label{eq:occupancy}
\alpha_G(\lam) \le \alpha_{K_{d,d}}(\lam).
\end{equation}
\end{theorem}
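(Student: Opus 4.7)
The plan is to use the occupancy method: instead of bounding the whole partition function, bound $\alpha_G(\lam)$ directly by a local computation at a random vertex. Let $v\in V(G)$ be uniformly random and condition on the restriction of $I$ to $V(G)\setminus N[v]$, where $N[v]=\{v\}\cup N(v)$. By the spatial Markov property of the hard-core distribution, each neighbor $u_i$ of $v$ is either \emph{externally blocked} (has a neighbor outside $N[v]$ lying in $I$, which forces $u_i\notin I$) or \emph{free}. Letting $K$ be the number of free neighbors of $v$, the star partition function on $\{v\}\cup N(v)$ with these boundary conditions gives
\[
\pr(v\in I \mid K) \;=\; \frac{\lam}{\lam+(1+\lam)^K},
\qquad
\alpha_G(\lam) \;=\; \E\!\left[\frac{\lam}{\lam+(1+\lam)^K}\right],
\]
where the expectation is taken jointly over the random vertex and the hard-core measure.

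To compare with $K_{d,d}$, note that there all $d$ neighbors of $v$ share exactly the same ``other'' neighborhood $V(G)\setminus N[v]$, so the indicators of being free are perfectly correlated and the law of $K$ is supported on $\{0,d\}$; this two-point law together with the identity above determines $\alpha_{K_{d,d}}(\lam)$ in closed form. For general $d$-regular $G$, I would supplement the formula for $\alpha_G(\lam)$ with self-consistency relations obtained by applying the same local analysis at each $u_i$ and using $d$-regularity to write $\alpha_G(\lam)$ also as the average of $\pr(u\in I)$ over neighbors $u$ of a random vertex. The resulting linear constraints on the joint law of the local statistics cut out a convex set of feasible distributions, and one maximizes the linear objective $\E[\lam/(\lam+(1+\lam)^K)]$ over this set.

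The main obstacle is proving that the maximum is attained at the two-point law realized by $K_{d,d}$. The map $k\mapsto \lam/(\lam+(1+\lam)^k)$ is neither convex nor concave on $[0,d]$, so a naive Jensen inequality applied to $K$ is useless, and the bound must come from the finer constraints encoding that the law of $K$ is the marginal of a genuine hard-core Gibbs measure on some $d$-regular graph. Making this extremality step sharp---either by setting up an explicit linear program on local marginals and verifying that the optimum lies at the $\{0,d\}$-supported law, or by a direct coupling/swap argument showing that correlating the blocking indicators only increases occupancy---is the technical heart of the proof, and is what lifts the Kahn--Galvin--Tetali--Zhao bound from the logarithm of the partition function to the occupancy fraction itself.
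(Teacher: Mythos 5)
Your outline correctly identifies the method---it is exactly the occupancy method of Davies, Jenssen, Perkins, and Roberts~\cite{davies2015independent}, a result this paper cites rather than reproves. The local view (the number $K$ of neighbors of a random vertex $v$ that are externally uncovered), the conditional formula $\Pr(v\in I\mid K)=\lam/(\lam+(1+\lam)^K)$ and hence $\alpha_G(\lam)=\E[\lam/(\lam+(1+\lam)^K)]$, and the observation that in $K_{d,d}$ the external neighborhoods of all $d$ neighbors of $v$ coincide so $K$ is supported on $\{0,d\}$, are all correct and are the core of their argument.

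Where the proposal falls short is precisely the step you flag as the ``technical heart'': you neither write down a concrete consistency constraint nor show that it forces the optimum onto the two-point law, so as written there is a genuine gap. In the cited proof this is closed by a single elementary identity. By $d$-regularity, the expected number of occupied neighbors of a uniformly random vertex equals $d\,\alpha_G(\lam)$, and computing both sides through the same local view gives
\[
d\,\E\!\left[\frac{\lam}{\lam+(1+\lam)^K}\right]\;=\;\E\!\left[\frac{\lam\,K\,(1+\lam)^{K-1}}{\lam+(1+\lam)^{K}}\right].
\]
Together with $\sum_j p_j=1$, this is a linear program in the law $(p_0,\dots,p_d)$ of $K$ with only two equality constraints, so an optimal solution has support of size at most two; one then checks directly (or via an LP dual certificate) that among the feasible two-point laws the optimum is the $\{0,d\}$-supported one, which $K_{d,d}$ realizes. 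Your appeal to ``the finer constraints encoding that the law of $K$ is the marginal of a genuine hard-core Gibbs measure'' both overstates what is needed and leaves the decisive step undone. The plan is sound; writing out the constraint above and finishing the small LP would complete it.
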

Theorem~\ref{thm:KddPart} can be recovered from Theorem~\ref{thm:KddOcc} by noting that $\log P_G(0)= 0$ for all $G$, and then integrating $\frac{\alpha_G(t)}{t}$ from $0$ to $\lam$.

Now $K_{d,d}$ contains many copies of the $4$-cycle, $C_4$, as subgraphs (in fact the highest possible $C_4$ density of a $d$-regular triangle-free graph). Heuristically we might imagine that having many short even cycles increases the independent set density, while having odd cycles decreases it.  So what happens if we forbid $4$-cycles?

Similarly, Cutler and Radcliffe~\cite{cutler2014maximum} show that $\frac{1}{|V(G)|} \log P_G(\lam)$ is {\em minimized} over all $d$-regular graphs by a union of copies of $K_{d+1}$, the complete graph on $d+1$ vertices.  But $K_{d+1}$ has (many) triangles.  So what happens if we forbid triangles?

\begin{question}
\noindent
\begin{itemize}
\item Which $d$-regular graphs of girth at least $4$ have the fewest independent sets? 
\item Which $d$-regular graphs of girth at least $5$ have the most independent sets? 
\item More generally, for $g$ even, which $d$-regular graphs of girth at least $g$ have the fewest independent sets, and for $g$ odd, which $d$-regular graphs of girth at least $g$ have the most independent sets?
\end{itemize}
\end{question} 
Here we answer the first two questions for the class of cubic ($3$-regular) graphs: the triangle-free cubic graphs with the fewest independent sets are copies of  the Petersen graph $P_{5,2}$, and the cubic graphs of girth at least $5$ with the most independent sets are copies of the Heawood graph $H_{3,6}$.

\begin{center}
\begin{figure} 
\caption{}
\label{fig:P52_H36}
\begin{tabular}{ c c }
  
\begin{tikzpicture}
    \node[shape=circle,fill=black] (A1) at (0,1) {};
     \node[shape=circle,fill=black] (A2) at (0,2) {};
    \node[shape=circle,fill=black] (B1) at (0.5877,-0.809) {};
    \node[shape=circle,fill=black] (B2) at (1.1754,-1.618) {};
    \node[shape=circle,fill=black] (C1) at (-0.5877,-0.809)  {};
    \node[shape=circle,fill=black] (C2) at (-1.1754,-1.618) {};
    \node[shape=circle,fill=black] (D1) at (0.951,0.309) {};
    \node[shape=circle,fill=black] (D2) at (1.902,0.618) {};
    \node[shape=circle,fill=black] (E1) at (-0.951,0.309) {};
    \node[shape=circle,fill=black] (E2) at (-1.902,0.618) {};

\draw plot [smooth, tension=2] coordinates { (A1) (A2) };
\draw plot [smooth, tension=2] coordinates { (B1) (B2) };
\draw plot [smooth, tension=2] coordinates { (C1) (C2) };
\draw plot [smooth, tension=2] coordinates { (D1) (D2) };
\draw plot [smooth, tension=2] coordinates { (E1) (E2) };

\draw plot [smooth, tension=2] coordinates { (A1) (B1) };
\draw plot [smooth, tension=2] coordinates { (B1) (E1) };
\draw plot [smooth, tension=2] coordinates { (E1) (D1) };
\draw plot [smooth, tension=2] coordinates { (D1) (C1) };
\draw plot [smooth, tension=2] coordinates { (C1) (A1) };

\draw plot [smooth, tension=2] coordinates { (A2) (D2) };
\draw plot [smooth, tension=2] coordinates { (D2) (B2) };
\draw plot [smooth, tension=2] coordinates { (B2) (C2) };
\draw plot [smooth, tension=2] coordinates { (C2) (E2) };
\draw plot [smooth, tension=2] coordinates { (E2) (A2) };

\end{tikzpicture}
 
  &

 \hspace{0.5cm}
\begin{tikzpicture}
    \node[shape=circle,fill=black] (A1) at (0,2) {};
    \node[shape=circle,fill=black] (A2) at (0,-2) {};
   \node[shape=circle,fill=black] (C1) at (0.868,1.802) {};
    \node[shape=circle,fill=black] (C2) at (0.868,-1.802) {};
   \node[shape=circle,fill=black] (C3) at (-0.868,1.802) {};
    \node[shape=circle,fill=black] (C4) at (-0.868,-1.802) {};
    
    \node[shape=circle,fill=black] (D1) at (1.564,1.247) {};
    \node[shape=circle,fill=black] (D2) at (1.564,-1.247) {};
   \node[shape=circle,fill=black] (D3) at (-1.564,1.247) {};
    \node[shape=circle,fill=black] (D4) at (-1.564,-1.247) {};
    \node[shape=circle,fill=black] (E1) at (1.95,0.445) {};
    \node[shape=circle,fill=black] (E2) at (1.95,-0.445) {};
    \node[shape=circle,fill=black] (E3) at (-1.95,0.445) {};
    \node[shape=circle,fill=black] (E4) at (-1.95,-0.445) {};

\draw plot [smooth, tension=2] coordinates { (A1) (C1) };
\draw plot [smooth, tension=2] coordinates { (C1) (D1) };
\draw plot [smooth, tension=2] coordinates { (D1) (E1) };
\draw plot [smooth, tension=2] coordinates { (E1) (E2) };
\draw plot [smooth, tension=2] coordinates { (E2) (D2) };
\draw plot [smooth, tension=2] coordinates { (D2) (C2) };
\draw plot [smooth, tension=2] coordinates { (C2) (A2) };
\draw plot [smooth, tension=2] coordinates { (A2) (C4) };
\draw plot [smooth, tension=2] coordinates { (C4) (D4) };
\draw plot [smooth, tension=2] coordinates { (D4) (E4) };
\draw plot [smooth, tension=2] coordinates { (E4) (E3) };
\draw plot [smooth, tension=2] coordinates { (E3) (D3) };
\draw plot [smooth, tension=2] coordinates { (D3) (C3) };
\draw plot [smooth, tension=2] coordinates { (C3) (A1) };

\draw plot [smooth, tension=2] coordinates { (E4) (C1) };
\draw plot [smooth, tension=2] coordinates { (E3) (C2) };
\draw plot [smooth, tension=2] coordinates { (E1) (D3) };
\draw plot [smooth, tension=2] coordinates { (E2) (D4) };
\draw plot [smooth, tension=2] coordinates { (C3) (C4) };
\draw plot [smooth, tension=2] coordinates { (A1) (D2) };
\draw plot [smooth, tension=2] coordinates { (A2) (D1) };
\end{tikzpicture}
\\
\\
Petersen Graph $P_{5,2}$
&
\hspace{0.5cm}
Heawood Graph $H_{3,6}$
  \end{tabular}
  \end{figure}
  \end{center}

  Notably, in all of the cases that we know ($K_{d+1}, K_{d,d}$, the cycles $C_n$, and the Petersen and Heawood graphs), the  optimizing graph is a  {\em Moore graph}.  A $(d,g)$-Moore graph, for $g$ odd, is a $d$-regular graph with girth $g$, diameter $(g-1)/2$ and exactly
\[ 1 + d \sum_{j=0}^{(g-3)/2} (d-1)^j \]
 vertices.  If $g$ is even, then a $(d,g)$-Moore graph is $d$-regular, has girth $g$  and exactly
 \[  1 + (d-1)^{g/2 -1} + d \sum_{j=0}^{(g-4)/2} (d-1)^j  \]
vertices.   Moore graphs are necessarily {\em cages}: regular graphs with the fewest number of vertices for their girth.  It is natural to consider the maximization problem for graphs of girth at least an odd integer $g$ and the minimization problem for graphs of girth at least an even integer $g$ as we expect short even cycles to encourage more independent sets and short odd cycles to suppress independent sets, and thus the solution to the maximization problem for even $g$ will be the same as the solution of the maximization problem for $g-1$, and likewise for minimization.  This intuition is borne out in all of the above examples.

Moore graphs do not exist for every pair $d,g$.  But we conjecture that if such a Moore graph exists, then it is extremal for the scaled number of independent sets in a $d$-regular graph of girth at least $g-1$ (and of course extremal for graphs of girth at least $g$ as well).
 
 \begin{conj}
 \label{conj:Moore}
 Suppose $g$ is odd and there exists a $(d,g)$-Moore graph, $G^*_{d,g}$.  Then for every $d$-regular graph $G$ of girth at least $g-1$,
 \[  \frac{1}{|V(G)|} \log | \mathcal I(G)| \ge \frac{1}{|V(G^*_{d,g})|} \log | \mathcal I(G^*_{d,g}) |     .\] 
 Suppose $g$ is even and there exists a $(d,g)$-Moore graph $G^*_{d,g}$.  Then for every $d$-regular graph $G$ of girth at least $g-1$,
 \[  \frac{1}{|V(G)|} \log | \mathcal I(G)| \le \frac{1}{|V(G^*_{d,g})|} \log | \mathcal I(G^*_{d,g}) |     .\] 
 \end{conj}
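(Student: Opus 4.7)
The plan is to strengthen the conjecture to the level of the occupancy fraction and prove it by extending the occupancy method of Davies--Jenssen--Perkins--Roberts~\cite{davies2015independent} used to establish Theorem~\ref{thm:KddOcc}. Concretely, I would first aim to show that if $G^*_{d,g}$ exists and $G$ is $d$-regular of girth at least $g-1$, then for every $\lam > 0$,
\[ \alpha_G(\lam) \ge \alpha_{G^*_{d,g}}(\lam) \quad (g \text{ odd}), \qquad \alpha_G(\lam) \le \alpha_{G^*_{d,g}}(\lam) \quad (g \text{ even}), \]
and then deduce the conjecture by integrating $\alpha_G(t)/t$ from $0$ to $1$, using $\log P_G(0)=0$, exactly as Theorem~\ref{thm:KddPart} was derived from Theorem~\ref{thm:KddOcc}.

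The occupancy machinery would proceed as follows. Pick a vertex $v \in V(G)$ uniformly at random and sample $I$ from the hard-core model at fugacity $\lam$. Set $r := \lfloor g/2 \rfloor - 1$. The girth-$(g-1)$ hypothesis makes the BFS ball $B_r(v)$ a truncated $d$-ary rooted tree, so the conditional distribution of $I \cap B_r(v)$ given $I \setminus B_r(v)$ obeys a clean tree recursion via the spatial Markov property of the hard-core model. Averaging $\1[v \in I]$ over $v$ and over $I$ expresses $\alpha_G(\lam)$ as the expectation of a local statistic whose distribution lies in a convex polytope $\mathcal K_{d,g,\lam}$ of ``valid profiles,'' cut out by (i) the tree recursion for hard-core marginals on the truncated tree, (ii) the symmetries obtained by re-rooting, and (iii) the degree and girth constraints on the boundary vertices of $B_r(v)$.

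The key step is an LP bound: the linear functional ``probability that the root is occupied'' should be extremized over $\mathcal K_{d,g,\lam}$ by the profile induced by $G^*_{d,g}$, maximized for even $g$ and minimized for odd $g$. The Moore graph is distinguished here by having diameter exactly $(g-1)/2$ or $g/2$, so $B_r(v)$ together with a canonical boundary identification recovers all of $G^*_{d,g}$, placing its profile at a vertex of $\mathcal K_{d,g,\lam}$. The parity enters through the nature of this identification: an even-girth closure is bipartite and encourages independent sets (maximization), while an odd-girth closure forces the presence of odd cycles and discourages them (minimization).

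The principal obstacle is producing the LP certificate beyond the cubic cases $g \in \{4,5\}$ treated in this paper, where the dual multipliers can be written down essentially by hand. For general $(d,g)$ the polytope $\mathcal K_{d,g,\lam}$ grows rapidly, and although the Moore profile is the obvious extremizer by symmetry, rigorously certifying it will require either an explicit dual construction, a comparison to the hard-core free energy on the infinite $d$-regular tree in the spirit of Heilmann--Lieb and Csikv\'ari, or an induction on $g$ that exploits the self-similarity of truncated $d$-regular trees. A secondary subtlety is that the very existence of $G^*_{d,g}$ is conditional (the $(57,5)$-Moore graph being the famous open case), so any proof must remain intrinsic to the local profile and cannot depend on a direct calculation inside a specific realization of $G^*_{d,g}$.
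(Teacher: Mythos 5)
The statement you were asked to prove is Conjecture~\ref{conj:Moore}, which the paper states as an \emph{open conjecture}; there is no ``paper's own proof'' to compare against. The paper establishes only the two cubic cases: girth at least $5$ (Heawood graph, Theorem~\ref{thm:d3g6}) and girth at least $4$ (Petersen graph, Theorem~\ref{thm:d3g4}). Your outline is a fair description of the strategy the authors use in those cases, but as you yourself concede in the final paragraph, it does not constitute a proof of the general statement --- it is a program, with the hard LP-certification step left entirely open.

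Beyond the fact that the proposal is a plan rather than a proof, there is a concrete error in the strengthening you propose. You claim one should first prove $\alpha_G(\lam) \ge \alpha_{G^*_{d,g}}(\lam)$ for \emph{every} $\lam>0$ when $g$ is odd, and then integrate. This occupancy-fraction statement is \emph{false} already for $d=3$, $g=5$: the paper explicitly notes that for $\lam > \lam_3^* \approx 1.84593$ the generalized Petersen graph $P_{7,2}$ has a strictly smaller occupancy fraction than $P_{5,2}$, and Theorem~\ref{thm:d3g4} is accordingly stated only for $\lam\in(0,1]$ (with the integration over $(0,1]$ sufficing for the counting corollary). In general one should expect the conjectured occupancy-fraction inequality, in the odd-girth (minimization) direction, to hold only on a bounded $\lam$-interval, with a different graph (e.g.\ a larger cage or an independence-ratio minimizer such as $P_{7,2}$ or $CYC_{13}$) extremal at large $\lam$; see Conjectures~\ref{conj:d3LB} and~\ref{conj:d4LB}. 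Any correct proposal must restrict the range of $\lam$ in the odd case. The even-girth case appears to be cleaner (Theorem~\ref{thm:d3g6} holds for all $\lam>0$), but that is a special feature, not a consequence of the parity heuristic about ``closures'' you give; your justification there is heuristic, not an argument. Finally, your claim that the $g$-balls $B_r(v)$ are truncated trees requires girth strictly greater than $2r$, and when $G$ has girth exactly $g-1$ (the minimum allowed) the local neighborhoods at depth $\lfloor g/2\rfloor-1$ are not trees for $g$ odd --- which is precisely why the paper has to enlarge the configuration set $\mathcal C_5$ (and further to $\mathcal C_4$) to include second-neighborhood collisions and edges. This is not a cosmetic complication; it is where most of the casework in Sections~\ref{sec:g5}--\ref{sec:g4} and the appendices lives, and you would need to account for it.
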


\subsection{The Petersen graph}

The {\em Petersen graph}, $P_{5,2}$, has $10$ vertices, is $3$-regular and vertex-transitive, has girth $5$ and is a $(3,5)$-Moore graph (see Figure~1).  Its independence polynomial is
\[ P_{P_{5,2}}(\lam) = 1+ 10\lam +30 \lam^2 + 30 \lam^3 + 5 \lambda ^4,\]
and its occupancy fraction is
\begin{equation}
\alpha_{P_{5,2}}(\lam) = \frac{\lambda  \left(1+ 6 \lam + 9\lam^2 +2 \lambda ^3\right)}{P_{P_{5,2}}(\lam)}.
\end{equation}

Our first result provides a tight lower bound on the occupancy fraction of triangle-free cubic graphs for every $\lam\in (0,1]$:
\begin{theorem}
\label{thm:d3g4}
For any triangle-free, cubic graph $G$, and for every $\lam \in (0,1]$,
\[ \alpha_G(\lam) \ge \alpha_{P_{5,2}} (\lam),\]
with equality if and only if $G$ is a union of copies of $P_{5,2}$. 
\end{theorem}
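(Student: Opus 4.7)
The plan is to adapt the occupancy method used in the proof of Theorem~\ref{thm:KddOcc} to the triangle-free cubic setting. Let $G$ be triangle-free cubic, let $I$ be a hard-core sample at fugacity $\lam$, and let $v$ be a uniformly random vertex of $G$ with neighbors $u_1, u_2, u_3$. Since $G$ is triangle-free the $u_i$ form an independent set, so conditional on the configuration on $V \setminus \{v, u_1, u_2, u_3\}$ the restriction of $I$ to these four vertices is distributed as the hard-core measure on a star. Writing $Y_i = \mathbf{1}\{u_i \text{ has an occupied neighbor other than } v\}$ and $k = 3 - Y_1 - Y_2 - Y_3$, an elementary computation gives
\[
\P(v \in I \mid \text{ext}) = \frac{\lam}{\lam + (1+\lam)^k} =: f(k), \qquad
\P(u_i \in I \mid \text{ext}) = \mathbf{1}\{Y_i=0\}\,\frac{\lam (1+\lam)^{k-1}}{\lam + (1+\lam)^k}.
\]
Averaging over $v$ and using that each $u_i$ (summed over $v$) ranges uniformly over $V(G)$ yields the identities
\[
\alpha_G(\lam) = \E[f(k)], \qquad 3\alpha_G(\lam) = \E\!\left[\frac{k\lam(1+\lam)^{k-1}}{\lam + (1+\lam)^k}\right].
\]

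Next I would set up a linear program whose variables are the joint probabilities of local configurations around a random vertex. Tracking only $k$ will not suffice to pin down the Petersen distribution; additional variables encoding second-neighborhood information are needed (for instance, $Y_i = 0$ forces both non-$v$ neighbors of $u_i$ to be unoccupied, and the same local calculation iterated at $u_i$ produces further linear identities). The LP constraints are the vertex-based identities above, their analogues obtained by applying the local computation at the $u_i$'s, and non-negativity; the objective is $\E[f(k)]$. The goal is to show that for $\lam \in (0,1]$ this minimum equals $\alpha_{P_{5,2}}(\lam)$, attained at the local distribution of disjoint copies of $P_{5,2}$. The bound itself is typically proved by exhibiting a dual certificate: a non-negative combination of the primal constraints that, combined with a pointwise linear inequality $f(k) \ge L(k)$, collapses algebraically to the Petersen occupancy fraction. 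The certificate can be guessed from the tight primal constraints in the Petersen case.

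The main obstacle is identifying the right set of local observables so that the minimizer of the LP is (essentially uniquely) the Petersen distribution, and then constructing and verifying the dual certificate over the full range $\lam \in (0,1]$. The cutoff $\lam \le 1$ almost certainly marks the threshold at which the linear envelope of $f$ used in the dual remains valid; beyond it a different envelope, or indeed a different minimizer, may appear. The equality case is then handled by chasing through the saturation conditions: equality forces the local distribution of $G$ to coincide with that of $P_{5,2}$, and standard structural arguments (every vertex lies in $5$-cycles arranged in the Petersen pattern, no $4$-cycles, correct second-neighborhood degrees, etc.) conclude that $G$ is a disjoint union of copies of $P_{5,2}$.
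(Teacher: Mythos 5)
Your high-level plan matches the paper's method: a local second-neighborhood view, consistency constraints between the views from $v$ and from a random neighbor $u$, an LP relaxation, and a dual certificate with complementary slackness for uniqueness. The first-neighborhood probability formulas you write are also correct. But the sketch glosses over the two places where essentially all of the work in the paper lives.

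First, the paper does not merely ``track second-neighborhood information.'' It defines the configuration as $C = (W, E_{12}, E_{22})$, the set $W$ of externally uncovered second neighbors together with the first-to-second edges and the edges inside the second neighborhood, and enumerates $207$ such configurations for cubic triangle-free graphs. The LP constraints are then exactly the three consistency identities
\[
\sum_{C} p(C)\bigl(\gamma_t^v(C) - \gamma_t^u(C)\bigr) = 0, \qquad t = 0,1,2,
\]
where $\gamma_t^v(C)$ (resp.\ $\gamma_t^u(C)$) is the conditional probability given $C$ that $v$ (resp.\ a uniformly random neighbor $u$ of $v$) has exactly $t$ occupied neighbors. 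You gesture at ``further linear identities'' obtained by iterating the local computation at the $u_i$'s, but you do not isolate this particular family, and it is precisely this family (not the raw identities $\alpha_G = \E[f(k)]$ you write down) that makes the LP tight at the Petersen distribution.

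Second, and this is the genuine gap, a single dual certificate does not work across $(0,1]$. The paper must partition $(0,1]$ into the four subintervals $(0, 3/16]$, $[3/16, 11/20]$, $[11/20, \sqrt{3/5}]$, $[\sqrt{3/5}, 1]$, choose different rational dual functions $\Lam_0^*(\lam), \Lam_1^*(\lam), \Lam_2^*(\lam)$ on each, and verify nonnegativity of the slack functions on each subinterval via a M\"obius substitution $\lam = \lam(t)$ mapping $[0,\infty)$ onto the subinterval, reducing the check to positivity of polynomial coefficients in $t$. Nothing in your outline anticipates that the dual certificate must change along the way. Your ``linear envelope of $f$'' explanation for the cutoff at $\lam = 1$ is also off: $(0,1]$ is simply the range on which the authors could produce a working dual, while the conjectured changeover of the minimizer to $P_{7,2}$ happens at $\lam_3^* \approx 1.846$, not at $\lam = 1$. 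Without the explicit constraint family, the subinterval decomposition, and the dual variables (and the computer-assisted positivity check over all $207$ configurations), the plan as stated does not yet constitute a proof.
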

By integrating $\frac{\alpha_G(\lam)}{\lam}$ from $\lam=0$ to $1$ we obtain the corresponding counting result:
\begin{cor}
\label{cor:Peter}
For any triangle-free, cubic graph $G$, and any $\lam \in (0,1]$,
\[ \frac{1}{|V(G)|} \log P_G(\lam) \ge  \frac{1}{10} \log P_{P_{5,2}}(\lam), \] 
and in particular,
\[  \frac{1}{|V(G)|} \log | \mathcal I(G)| \ge \frac{1}{10} \log | \mathcal I(P_{5,2}) | ,\]
with equality if and only if $G$ is a union of copies of $P_{5,2}$. 
\end{cor}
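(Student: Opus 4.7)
The plan is to derive Corollary \ref{cor:Peter} directly from Theorem \ref{thm:d3g4} by integration, as the authors indicate immediately before the statement. First I would set up the identity
\[ \frac{1}{|V(G)|}\log P_G(\lambda) \;=\; \int_0^\lambda \frac{\alpha_G(t)}{t}\,dt, \]
which follows from $P_G(0)=1$ (so $\log P_G(0)=0$) together with the already-displayed formula $\alpha_G(\lambda)/\lambda = (\log P_G(\lambda))'/|V(G)|$. The integrand extends continuously to $t=0$ because $P_G(\lambda) = 1 + |V(G)|\lambda + O(\lambda^2)$, which forces $\alpha_G(t)/t \to 1$ as $t \to 0^+$. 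The analogous identity holds for $P_{5,2}$, with the prefactor $1/|V(G)|$ replaced by $1/10$.

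Next I would apply Theorem \ref{thm:d3g4} pointwise to the integrand. For any triangle-free cubic $G$ and any $t \in (0,1]$ the theorem gives $\alpha_G(t) \ge \alpha_{P_{5,2}}(t)$; dividing by $t$ and integrating from $0$ to $\lambda \in (0,1]$ yields $\frac{1}{|V(G)|}\log P_G(\lambda) \ge \frac{1}{10}\log P_{P_{5,2}}(\lambda)$. Setting $\lambda = 1$ and using $|\mathcal I(G)| = P_G(1)$ recovers the independent-set counting bound.

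For the equality characterization, suppose equality holds at some $\lambda \in (0,1]$. The continuous, non-negative function $f(t) := \alpha_G(t)/t - \alpha_{P_{5,2}}(t)/t$ on $[0,\lambda]$ then satisfies $\int_0^\lambda f = 0$, and so vanishes identically on $[0,\lambda]$. Picking any $t_0 \in (0,\lambda]$ and invoking the equality clause of Theorem \ref{thm:d3g4} at $t_0$ forces $G$ to be a disjoint union of copies of $P_{5,2}$. The converse is immediate from multiplicativity of $P_G$ over disjoint unions combined with additivity of $|V(G)|$.

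Because Theorem \ref{thm:d3g4} is assumed, there is no substantive obstacle to the corollary itself; the only point requiring care is the equality case, handled by the standard fact that a non-negative continuous function with zero integral vanishes identically. The real work of the paper lies in proving the underlying occupancy-fraction inequality (Theorem \ref{thm:d3g4}), not in this deduction.
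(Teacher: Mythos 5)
Your proof is correct and matches the paper's approach exactly: the authors derive the corollary by integrating the occupancy-fraction inequality from Theorem~\ref{thm:d3g4}, precisely as you do, with the equality case following from continuity and the equality clause of the theorem. Your spelled-out treatment of the endpoint behavior ($\alpha_G(t)/t \to 1$ as $t\to 0^+$) and of the equality characterization is a faithful elaboration of what the paper leaves implicit.
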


The inequality of course holds for $\lam =0$ but we do not have uniqueness, as $P_G(0) =1$ for all $G$.

Recently, Cutler and Radcliffe~\cite{cutler2016minimum} proved a lower bound on the occupancy fraction of triangle-free cubic graphs that gave $\frac{1}{|V(G)|} \log | \mathcal I(G)| \ge 0.430703$, while the tight bound above is $\frac{1}{10} \log | \mathcal I(P_{5,2}) | = \frac{\log (76  )}{10} \approx 0.43307 $.  Cutler and Radcliffe also conjectured that for every cubic triangle-free graph $G$ on $n$ vertices and for every $\lam>0$
\[\frac{1}{n} \log P_G(\lam) \geq\min \left\{ \frac{1}{10} \log P_{P_{5,2}}(\lam),\frac{1}{14} \log P_{P_{7,2}}(\lam)\right\},\]
where $P_{7,2}$ is the (7,2)-Generalized Petersen graph (see Figure~2).
Theorem~\ref{thm:d3g4} proves this conjecture for $\lam \in (0,1]$.

\begin{center}
\begin{figure}[h!]\label{fig:P72} 
\caption{}
\begin{tabular}{ c }
  
 \hspace{0.5cm}
\begin{tikzpicture}
    \node[shape=circle,fill=black] (A1) at (0,1) {};
     \node[shape=circle,fill=black] (A2) at (0,2) {};
    \node[shape=circle,fill=black] (B1) at (0.439,-0.901) {};
    \node[shape=circle,fill=black] (B2) at (0.878,-1.802) {};
     \node[shape=circle,fill=black] (C1) at (-0.439,-0.901) {};
     \node[shape=circle,fill=black] (C2) at (-0.878,-1.802) {};
    \node[shape=circle,fill=black] (D1) at (0.782,0.623) {};
     \node[shape=circle,fill=black] (D2) at (1.564,1.246) {};
     \node[shape=circle,fill=black] (E1) at (-0.782,0.623) {};
     \node[shape=circle,fill=black] (E2) at (-1.564,1.246) {};
    \node[shape=circle,fill=black] (F1) at (0.975,-0.226) {};
    \node[shape=circle,fill=black] (F2) at (1.950,-0.452) {};
     \node[shape=circle,fill=black] (G1) at (-0.975,-0.226) {};
     \node[shape=circle,fill=black] (G2) at (-1.950,-0.452) {};

\draw plot [smooth, tension=2] coordinates { (A1) (A2) };
\draw plot [smooth, tension=2] coordinates { (B1) (B2) };
\draw plot [smooth, tension=2] coordinates { (C1) (C2) };
\draw plot [smooth, tension=2] coordinates { (D1) (D2) };
\draw plot [smooth, tension=2] coordinates { (E1) (E2) };
\draw plot [smooth, tension=2] coordinates { (F1) (F2) };
\draw plot [smooth, tension=2] coordinates { (G1) (G2) };

\draw plot [smooth, tension=2] coordinates { (A1) (F1) };
\draw plot [smooth, tension=2] coordinates { (F1) (C1) };
\draw plot [smooth, tension=2] coordinates { (C1) (E1) };
\draw plot [smooth, tension=2] coordinates { (E1) (D1) };
\draw plot [smooth, tension=2] coordinates { (D1) (B1) };
\draw plot [smooth, tension=2] coordinates { (B1) (G1) };
\draw plot [smooth, tension=2] coordinates { (G1) (A1) };

\draw plot [smooth, tension=2] coordinates { (A2) (D2) };
\draw plot [smooth, tension=2] coordinates { (D2) (F2) };
\draw plot [smooth, tension=2] coordinates { (F2) (B2) };
\draw plot [smooth, tension=2] coordinates { (B2) (C2) };
\draw plot [smooth, tension=2] coordinates { (C2) (G2) };
\draw plot [smooth, tension=2] coordinates { (G2) (E2) };
\draw plot [smooth, tension=2] coordinates { (E2) (A2) };

\end{tikzpicture}

\\
\\
(7,2)-Generalized Petersen $P_{7,2}$
  \end{tabular}
  \end{figure}
  \end{center}

\subsection{The Heawood graph}

 The {\em Heawood graph},  $H_{3,6}$, has $14$ vertices, is $3$-regular and vertex-transitive, has girth $6$, and is a $(3,6)$-Moore graph~(see Figure~1).  It can be constructed as the point-line incidence graph of the Fano plane.  Its independence polynomial is 
\[ P_{H_{3,6}}(\lam) = 1+14\lam + 70\lam^2 + 154\lam^3 +147 \lam^4 + 56 \lam^5 +14\lam^6 + 2\lam^7,\]
and its occupancy fraction is
\begin{equation}
\label{eq:HeaOcc}
\alpha_{H_{3,6}}(\lam) =  \frac{ \lam ( 1+ 10\lam +33 \lam^2 + 42 \lam^3 +20 \lam^4 + 6\lam^5 + \lam^6  ) }{ P_{H_{3,6}}(\lam)  }  . 
\end{equation}

Our second result provides a tight upper bound on the occupancy fraction of cubic graphs with girth at least $5$:
\begin{theorem}
\label{thm:d3g6}
For any cubic graph $G$ of girth at least $5$, and for every $\lam>0$,
\[ \alpha_G(\lam) \le \alpha_{H_{3,6}} (\lam),\]
with equality if and only if $G$ is a union of copies of $H_{3,6}$. 
\end{theorem}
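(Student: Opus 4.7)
The plan is to adapt the \emph{occupancy method} of Davies--Jenssen--Perkins--Roberts~\cite{davies2015independent} (used to prove Theorem~\ref{thm:KddOcc}) to cubic graphs of girth at least $5$. Fix such a graph $G$, draw $\mathbf I$ from the hard-core distribution on $G$ at fugacity $\lam$, and sample $v\in V(G)$ uniformly at random, so that $\alpha_G(\lam)=\P(v\in\mathbf I)$. Because the girth is at least $5$, the ball $B_2(v)$ induces a tree $T_v$ on exactly ten vertices: the root $v$, its three children, and their six children. I would reveal $\mathbf I\cap B_2(v)$ together with, for each grandchild $w$ of $v$, an indicator $B_w\in\{0,1\}$ of whether some neighbour of $w$ lying outside $B_2(v)$ belongs to $\mathbf I$. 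By the spatial Markov property of the hard-core model, the conditional law of $\mathbf I\cap B_2(v)$ given the \emph{blockage pattern} $B=(B_w)\in\{0,1\}^6$ is the hard-core model on $T_v$ with the blocked grandchildren effectively deleted, so $\alpha_G(\lam)$ becomes an explicit linear functional of the law of $B$ with coefficients in $\mathbb Q(\lam)$.

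The next step is to impose enough consistency constraints on the law of $B$ that the resulting linear program has value $\alpha_{H_{3,6}}(\lam)$. Averaging over the random vertex $v$ yields identities such as $\P(v\in\mathbf I)=\P(u\in\mathbf I)$ for any neighbour $u$ of $v$ and the analogous equalities for grandchildren; re-rooting $B_2$ at a child or grandchild and demanding that the marginals agree produces further constraints tying the law of $B$ to a single global occupancy fraction. Together with the trivial requirement that $B$ is a probability distribution on $\{0,1\}^6$, these give a finite-dimensional linear program whose value upper bounds $\alpha_G(\lam)$ for every cubic graph of girth at least $5$.

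The crux of the argument is showing that the value of this linear program equals the Heawood occupancy fraction $\alpha_{H_{3,6}}(\lam)$ of~\eqref{eq:HeaOcc}. Feasibility is straightforward: in $H_{3,6}$ the vertex-transitivity and the Fano-plane incidence structure render the law of $B$ very symmetric and computable in closed form, and a direct check shows that it attains $\alpha_{H_{3,6}}(\lam)$. For the matching upper bound I would exhibit a dual certificate---nonnegative multipliers on the consistency constraints whose weighted combination dominates the linear objective pointwise on $\{0,1\}^6$---valid uniformly in $\lam>0$. Constructing and verifying such multipliers is a symbolic-algebra computation in $\lam$ that, in analogy with~\cite{davies2015independent}, is the principal technical obstacle of the proof.

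Finally, the equality statement should follow from complementary slackness together with a short rigidity argument: at the optimum every vertex of $G$ must realise the same local distribution of $B$ as the Heawood graph, which forces the girth to be exactly $6$ and each $B_3(v)$ to close up in the pattern of the Heawood graph; a covering argument then promotes this local rigidity to the global conclusion that $G$ is a disjoint union of copies of $H_{3,6}$.
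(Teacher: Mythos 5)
The key structural assumption in your proposal is false once the girth drops from $6$ to $5$, and this breaks the very first step. You assert that ``because the girth is at least $5$, the ball $B_2(v)$ induces a tree $T_v$ on exactly ten vertices.'' Girth at least $5$ only rules out edges inside the first neighborhood (triangles) and repeated second neighbors ($4$-cycles); it does \emph{not} rule out edges among the second neighbors. Any $5$-cycle through $v$, say $v\,u_1\,w_1\,w_2\,u_2\,v$, places the edge $w_1 w_2$ inside the second neighborhood, so for graphs of girth exactly $5$ the ball $B_2(v)$ is not a tree. Consequently, conditioning on your blockage pattern $B\in\{0,1\}^6$ does \emph{not} determine the conditional law of $\mathbf I\cap B_2(v)$: that law is the hard-core measure on whatever graph $B_2(v)$ actually induces (with blocked grandchildren removed), and this varies with the unseen edges among the six grandchildren. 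Thus $\alpha_G(\lam)$ is not a well-defined linear functional of the law of $B$ alone, and the finite-dimensional LP you describe does not capture all girth-$\ge 5$ cubic graphs.

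This is precisely the obstruction the paper addresses by enlarging the local view. The configuration recorded there is a triple $(W,E_{12},E_{22})$ consisting of the set of externally uncovered second neighbors together with the edges from the first neighborhood to $W$ \emph{and} the edges inside $W$; for girth at least $5$ this produces, up to symmetry, $46$ configurations organized into $15$ types (one of which, the tree type $C_0$, recovers your $\{0,1\}^6$ picture and suffices on its own only under the stronger girth-$\ge 6$ hypothesis). Once the local view is enriched in this way, your outline---write $\alpha_G$ as an expectation over configurations, impose re-rooting/consistency constraints, solve the LP via a dual certificate, and extract uniqueness from complementary slackness plus local rigidity---matches the paper's strategy closely. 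The consistency constraints the paper actually uses are the identities $\Pr[v\text{ has }t\text{ occupied neighbors}]=\Pr[u\text{ has }t\text{ occupied neighbors}]$ for $t=0,1,2$, where $u$ is a uniformly random neighbor of $v$; these, rather than the bare $\Pr(v\in\mathbf I)=\Pr(u\in\mathbf I)$, are what make the dual feasible with value $\alpha_{H_{3,6}}(\lam)$. To repair your proposal you would need to (i) replace the $\{0,1\}^6$ local view by one that also records the induced subgraph on the second neighborhood, (ii) enumerate the resulting configurations, and (iii) verify nonnegativity of the slack function over this larger configuration set, which is where the genuine computational work lies.
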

And by integrating $\frac{\alpha_G(t)}{t}$ from $t=0$ to $\lam$ we obtain the corresponding counting results.
\begin{cor}
\label{cor:Hea}
For any cubic graph $G$ of girth at least $5$, and for every $\lam >0$,
\[ \frac{1}{|V(G)|} \log P_G(\lam) \le  \frac{1}{14} \log P_{H_{3,6}}(\lam), \] 
and in particular,
\[  \frac{1}{|V(G)|} \log | \mathcal I(G)| \le \frac{1}{14} \log | \mathcal I(H_{3,6}) | ,\]
with equality if and only if $G$ is a union of copies of $H_{3,6}$. 
\end{cor}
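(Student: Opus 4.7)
The plan is to derive Corollary~\ref{cor:Hea} from Theorem~\ref{thm:d3g6} by integration, exactly as the paper already suggests. The key tool is the identity recorded in the derivation just before Theorem~\ref{thm:KddOcc}, expressing the occupancy fraction as a logarithmic derivative:
\[ \alpha_G(\lam) \;=\; \frac{\lam}{|V(G)|}\bigl(\log P_G(\lam)\bigr)'. \]
Rearranging this as $\frac{d}{d\lam}\bigl[|V(G)|^{-1}\log P_G(\lam)\bigr] = \alpha_G(\lam)/\lam$ and using $P_G(0)=1$ (so $\log P_G(0)=0$), integration from $0$ to $\lam$ yields
\[ \frac{1}{|V(G)|}\log P_G(\lam) \;=\; \int_0^\lam \frac{\alpha_G(t)}{t}\,dt, \]
and the same identity holds for $H_{3,6}$ with the denominator $14$.

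Having set this up, the pointwise bound from Theorem~\ref{thm:d3g6}, namely $\alpha_G(t)\le\alpha_{H_{3,6}}(t)$ for every $t>0$, slots in under the integral to give
\[ \frac{1}{|V(G)|}\log P_G(\lam) \;\le\; \int_0^\lam \frac{\alpha_{H_{3,6}}(t)}{t}\,dt \;=\; \frac{1}{14}\log P_{H_{3,6}}(\lam). \]
Specializing to $\lam=1$ and using $P_G(1)=|\mathcal I(G)|$ recovers the statement for the total count of independent sets as a direct consequence.

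The only routine subtleties are integrability at $t=0$ and the equality case. Since $P_G(0)=1$ and $P_G'(0)=|V(G)|$, one has $\alpha_G(t)/t\to 1$ as $t\to 0^+$, so the integrand extends continuously to $[0,\lam]$ and the integral is well defined. For the iff statement, if $G$ is a disjoint union of copies of $H_{3,6}$, then multiplicativity of the independence polynomial over disjoint unions, $P_G(\lam)=P_{H_{3,6}}(\lam)^{|V(G)|/14}$, gives equality immediately. Conversely, if $G$ is not such a union, then the uniqueness clause of Theorem~\ref{thm:d3g6} means $\alpha_G$ and $\alpha_{H_{3,6}}$ disagree as rational functions of $\lam$; combined with the pointwise inequality and continuity, we get $\alpha_G(t)<\alpha_{H_{3,6}}(t)$ on a dense open subset of $(0,\lam]$, so the integrated gap is strictly positive and the inequality in the corollary is strict. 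I do not anticipate any genuine obstacle in this derivation — the corollary really is just integration of the occupancy bound, and all of the mathematical content sits in Theorem~\ref{thm:d3g6}.
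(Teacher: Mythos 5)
Your proof is correct and follows exactly the route the paper itself indicates: integrate $\alpha_G(t)/t$ from $0$ to $\lam$, use $\log P_G(0)=0$, and apply the pointwise bound from Theorem~\ref{thm:d3g6}. The only addition you make — checking integrability at $t=0$ and spelling out the equality case — is sound, though the uniqueness argument can be shortened since Theorem~\ref{thm:d3g6} already gives strict inequality at \emph{every} $\lam>0$ when $G$ is not a union of copies of $H_{3,6}$, so there is no need to pass through a density argument.
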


Note that Theorem~\ref{thm:d3g6} applies to all positive $\lam$, while Theorem~\ref{thm:d3g4} requires $\lam \in (0,1]$.  Some bound on the interval for which $P_{5,2}$ minimizes the occupancy fraction is necessary: for large $\lam$, $P_{7,2}$ has a smaller occupancy fraction, and in fact in the limit as $\lam \to \infty$, it is minimal:  Staton~\cite{staton1979some} proved  the independence ratio of any triangle-free cubic graph is at least $5/14$ and this is achieved by $P_{7,2}$.

Corollaries~\ref{cor:Peter} and \ref{cor:Hea} answer the $3$-regular case of a question of Zhao~\cite{yufeiSurvey} and confirm his conjecture in these cases that minimum and maximum normalized number of independent sets under local constraints are attained by finite graphs.

We prove Theorems~\ref{thm:d3g4} and \ref{thm:d3g6} by introducing several local constraints that the hard-core model induces on any $3$-regular graph of a given girth.  We then relax the optimization problem to the set of all probability distributions on local configurations that satisfy these constraints and solve the relaxation using linear programming.

We begin in Section~\ref{sec:related} with an overview of the method  that we use to prove Theorems~\ref{thm:d3g4} and \ref{thm:d3g6}. 

In Section~\ref{sec:girth6} we prove Theorem~\ref{thm:d3g6} under the additional assumption that $G$ has girth at least $6$.  This illustrates the method and introduces the type of local constraints we will use in the other proofs.  In Section~\ref{sec:g5} we extend the proof to all graphs of girth at least $5$ by introducing new variables into our maximization problem.  In Section~\ref{sec:g4} we prove Theorem~\ref{thm:d3g4} by switching from maximization to minimization and by introducing further variables corresponding to graphs containing $4$-cycles.  Nonetheless, the constraints remain unchanged from those in Sections~\ref{sec:girth6} and \ref{sec:g5}.

We conclude in Section~\ref{sec:extensions} with some further questions on the number of independent sets and graph homomorphisms in regular graphs with girth constraints.

\section{On the method and related work}
\label{sec:related}
The method we use is an extension of the  method used in~\cite{davies2015independent} to prove Theorem~\ref{thm:KddOcc}  and the analogous theorem for random matchings in regular graphs.  At a high level, the method works as follows.  To bound the occupancy fraction of the hard-core model on a graph $G$, we consider the experiment of drawing an independent set $I$ from the hard-core model, then independently choosing a vertex $v$ uniformly from the graph.  We then record a {\em local view } of both $G$ and $I$ from the perspective of $v$. The depth-$t$ local view from $v$ includes both the information of the graph structure of the depth-$t$ neighborhood of $v$ as well as the boundary conditions the independent set $I$ induces on this neighborhood (that is, which vertices at the boundary are blocked from being in the independent set by some external vertex).   In~\cite{davies2015independent}, the local view considered was of the first neighborhood of $v$, with each neighbor labeled according to whether or not it had an occupied neighbor among the second neighbors of $v$.  

In this paper, we extend the local view to include the first and second neighborhood of $v$.  We call a realization of the local view a {\em configuration}.  The probabilistic experiment of drawing $I$ and $v$ at random induces a probability distribution on the set of all possible configurations.  Not all probability distributions over the configuration set are attainable by graphs; certain consistency conditions must hold. For instance, here we use the fact that the probability that $v$ has $t$ occupied neighbors in this experiment must equal the probability that a random neighbor $u$ of $v$ has $t$ occupied neighbors.  Such consistency conditions serve as constraints in an optimization problem in which the variables are the individual probabilities of each possible configuration.

The art in applying the method is choosing the right local view and which consistency conditions to impose.  Enriching the local view as we have done here adds power to the optimization program, but comes at the cost of increasing the  complexity of the resulting linear program.  As an example, compare the upper bound on the independence polynomial in $d$-regular triangle-free graphs~\cite{davies2015independent} with the lower bound for $3$-regular triangle-free graphs given by Theorem~\ref{thm:d3g4}: the proof of the first is short and elementary, while the proof of the second requires (at least in this iteration) a large mass of calculations given in the appendix and in the ancillary files.  

This suggests several directions for further inquiry into this method.
\begin{enumerate}
\item Is there a general theory of which problems can be solved using this method and is there an underlying principle that indicates which distributions and graphs are extremal?
\item Can the proof procedure be efficiently automated, in a way that given the definition of the local view, the allowed configurations, and the objective function, a computer outputs a bound along with a proof certificate?
\item Is there a more analytic and less computational analysis of the linear programs than the proofs we give here?
\end{enumerate}

The method has to this point been used for upper and lower bounds on independent sets and matchings in regular graphs~\cite{davies2015independent, davies2016averagens, cutler2016minimum}, as well as the Widom-Rowlinson model~\cite{cohen2015widom}, another statistical physics model with hard constraints.  In~\cite{PottsExtremes}, the method is applied to models with soft constraints, namely the Ising and Potts models on regular graphs, which in the `zero-temperature limit' yield extremal bounds on the number of $q$-colorings of cubic graphs. See the survey of Zhao~\cite{yufeiSurvey} for more on extremal problems for regular graphs.

\section{Proof of Theorem \ref{thm:d3g6} for girth at least $6$}
\label{sec:girth6}
Let $G$ be a $3$-regular graph of girth at least $6$.  Since $G$ has girth greater than $5$, every vertex $v \in V(G)$ has $6$ distinct second neighbors, and its second neighbors form an independent set.

Draw an independent set $I\in \mathcal{I}(G)$ from the hard-core distribution on $G$ with fugacity $\lambda>0$. We say that a vertex is \emph{occupied} if it is in $I$.  
Pick a vertex $v$ uniformly at random from $V(G)$. We say that a vertex $u$ of the second neighborhood of $v$ is \emph{externally uncovered} if none of its neighbors at distance $3$ from $v$ are in $I$.
Order the neighbors of $v$ arbitrarily, $u_1, u_2,u_3$.
Describe the local view of $v$ with respect to $I$ by  $C= (c_1,c_2,c_3)$ with $c_i$ being the number of externally uncovered second neighbors joined to $u_i$.

Let 
\begin{align*}
\mathcal C_6 &= \{ (0,0,0), (0,0,1), (0,0,2), (0,1,1), (0,1,2), \\
 &(0,2,2), (1,1,1), (1,1,2), (1,2,2),(2,2,2) \}
\end{align*}
be the set of all possible configurations $C$ that can arise from cubic graphs of girth at least $6$.  As the functions that appear in the optimization problem below do not depend on the ordering of the $c_i$'s we can restrict ourselves to multisets; that is, the configuration $(1,1,2)$ is equivalent $(2,1,1)$.  We abuse notation and let $C$ refer to the vector $(c_1, c_2,c_3)$ as well as the graph formed by the configuration: $v$ joined to its three neighbors $u_1,u_2,u_3$, each joined to $c_1, c_2, c_3$ second neighbors of $v$ respectively.

Consider the following quantities,
\begin{align*}
Z_-(C) &= \prod_{i=1}^3 \left( \lam + (1+\lam)^{c_i} \right )  \\
Z_+(C) &= \lam  (1+\lam)^{\sum_{i=1}^3 c_i } \\
Z(C) &= Z_-(C) + Z_+(C) .
\end{align*}
Here, $Z_-(C)$ is the partition function of $C$ restricted to independent sets with $v$  unoccupied; $Z_+$ is the same restricted to $v$ occupied, and $Z(C)$ is the total partition function of $C$. We reserve the letter $P$ for a partition function of a full cubic graph, as in $P_{H_{3,6}}$, and use $Z$ for the partition function of configuration. Then the probability that $v$ is occupied given configuration $C$ is
\begin{align*}
\alpha_{C,v}(\lam)&:= \Pr[ v \in I | C] = \frac{Z_+(C)}{Z(C)}. 
\end{align*}

Observe that the occupancy fraction of $G$ can be written as
$$
\alpha_G(\lam) = \frac{1}{n}\sum_{v\in V(G)} \Pr[v\in I] = \sum_{C\in \mathcal C_6}\alpha_{C,v}(\lam) \Pr[C],
$$
where $\Pr[C]$ is the probability that the configuration $C$ is observed when $I\in \mathcal I (G)$ is chosen according to the hard-core model and $v$ is chosen uniformly at random. 

Given the choice of $v$, let $u$ be a vertex chosen uniformly at random from the neighbors of $v$. 
We can write down formulae for the conditional probabilities that either $v$ or $u$ has a given   the number of occupied neighbors.  Let 
\begin{align*}
\gamma_t^v(C) &= \Pr[v \text{ has } t \text{ occupied neighbors}|C]  \quad \text{ and }\\
\gamma_t^u(C) &= \Pr[u \text{ has } t \text{ occupied neighbors}|C].
\end{align*}
\begin{lemma}
\label{lem:g6functions}
\begin{align*}
\gamma^v_0(C) &=  \frac{1+\lam}{\lam} \alpha_{C,v}(\lam) \\
\gamma^v_1(C) &=  \alpha_{C,v}(\lam) \cdot \sum_{i=1}^3 (1+\lam)^{-c_i} \\
\gamma^v_2(C) &=   \frac{\lam^2}{Z(C)} \cdot \sum_{i=1}^3 (1+\lam)^{c_i} \\
\gamma^u_0(C) &=   (1- \alpha_{C,v}(\lam)) \frac{1+\lam}{3} \sum_{i=1}^3 \frac{1}{\lam + (1+\lam)^{c_i}}     \\
\gamma^u_1(C) &=    \alpha_{C,v}(\lam) \cdot \frac{1}{3} \sum_{i=1}^3 (1+\lam)^{-c_i} + (1- \alpha_{C,v}(\lam)) \cdot \frac{1}{3} \sum_{i=1}^3 \frac{c_i \lam}{\lam + (1+ \lam)^{c_i}}     \\
\gamma^u_2(C) &=  \alpha_{C,v}(\lam) \cdot \frac{1}{3} \sum_{i=1}^3 c_i \lam (1+\lam)^{-c_i} + (1- \alpha_{C,v}(\lam)) \cdot \frac{1}{3} \sum_{i\in\{1,2,3\}\atop c_i=2} \frac{\lam^2}{\lam + (1+ \lam)^{c_i}} .
\end{align*}
\end{lemma}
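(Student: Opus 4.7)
The plan is to observe that the girth hypothesis collapses each conditional probability in the lemma to a finite hard-core computation on a small tree. Since $G$ has girth at least $6$, the subgraph induced by $v$, its three neighbors $u_1,u_2,u_3$, and the six distinct second neighbors is a tree, and the second neighbors form an independent set. The data in $C$ fix, for each $u_i$, a set of $c_i$ externally uncovered second neighbors (free) and $2-c_i$ externally covered ones, which each have an occupied vertex at distance $3$ from $v$ and so must themselves be unoccupied. Conditioning on $C$ therefore makes the distribution of the local configuration equal to the hard-core distribution on this tree with the externally covered leaves set to $0$; the resulting partition function is precisely $Z(C)$, split as $Z_-(C)$ (when $v$ is unoccupied) and $Z_+(C)$ (when $v$ is occupied). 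All six $\gamma$'s are then probabilities on this tree and can be written down by direct enumeration.

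To prove the formulae for $\gamma_t^v$ I would enumerate the subsets $S\subseteq\{1,2,3\}$ of occupied $u_i$'s, noting that any non-empty $S$ forces $v$ unoccupied. If $v$ is occupied then $S=\emptyset$, all $u_i$ are forced unoccupied, and this contributes $Z_+(C)$ to the $t=0$ count; if $v$ is unoccupied, the branch at $u_i$ contributes $\lambda$ when $i\in S$ (forcing its $c_i$ free leaves to $0$) and $(1+\lambda)^{c_i}$ when $i\notin S$. Summing over subsets $S$ with $|S|=t$ and dividing by $Z(C)$ matches the stated expressions once I rewrite them using $Z_+(C)=\lambda(1+\lambda)^{\sum_i c_i}$ and $\alpha_{C,v}=Z_+(C)/Z(C)$; for instance the $t=0$ contribution is $Z_+(C)+(1+\lambda)^{\sum_i c_i}=\tfrac{1+\lambda}{\lambda}Z_+(C)$, which yields $\gamma_0^v(C)=\tfrac{1+\lambda}{\lambda}\alpha_{C,v}(\lambda)$, and $\gamma_1^v$, $\gamma_2^v$ fall out similarly from the expansion of $\prod_i(\lambda+(1+\lambda)^{c_i})$ and $Z_+(C)/Z(C)$.

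For $\gamma_t^u$ I would average over $i\in\{1,2,3\}$ and condition on the occupancy of $v$. If $v$ is occupied (probability $\alpha_{C,v}$) then every $u_i$ is forced unoccupied and each of its $c_i$ free second neighbors is independently occupied with probability $\lambda/(1+\lambda)$, so the number of occupied neighbors of $u_i$ is $1+\mathrm{Bin}(c_i,\lambda/(1+\lambda))$. If $v$ is unoccupied (probability $1-\alpha_{C,v}$) then the three branches are independent; in branch $i$, the vertex $u_i$ is occupied with probability $\lambda/(\lambda+(1+\lambda)^{c_i})$ and then has $0$ occupied neighbors, while when $u_i$ is unoccupied the count of occupied neighbors equals $\mathrm{Bin}(c_i,\lambda/(1+\lambda))$. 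Evaluating these binomial probabilities at $t=0,1,2$ and combining yields the three stated formulae for $\gamma_t^u$; the restriction to indices with $c_i=2$ in the $v$-unoccupied part of $\gamma_2^u$ appears because $\mathrm{Bin}(c_i,\lambda/(1+\lambda))=2$ is impossible for $c_i<2$. The only real work is algebraic bookkeeping, and I expect no conceptual obstacle beyond checking that the forced leaves are accounted for consistently in the numerators and denominators.
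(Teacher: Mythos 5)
Your proof is correct and follows essentially the same route as the paper: both condition on the local configuration $C$, exploit that the depth-$2$ ball is a tree (girth $\ge 6$) so that the conditional law is the hard-core measure on this finite tree with externally covered leaves pinned to $0$, and then read off the $\gamma$'s from the partition functions $Z_\pm(C)$ by direct enumeration. Your binomial bookkeeping (e.g.\ the number of occupied neighbours of $u_i$ being $1+\mathrm{Bin}(c_i,\lambda/(1+\lambda))$ when $v$ is occupied) is just a compact repackaging of the same case analysis the paper carries out for $\gamma_2^v$ and $\gamma_2^u$.
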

\begin{proof}
Here we show how to obtain the expressions for $\gamma^v_2(C)$ and $\gamma^u_2(C)$; the others  follow similarly. For $\gamma^v_2(C)$ we need to select a vertex $u_i$ in the neighborhood of $v$ that will not be included in $I$, and insist that the two other neighbors of $v$ are in $I$. Given this choice, we may additionally include any subset of the externally uncovered neighbors of $u_i$ (different from $v$), contributing the factor $(1+\lam)^{c_i}$. The term $\lam^2$ accounts for the contribution of $u_j$ with $j\neq i$. It follows that 
$$
\gamma^v_2(C) =   \frac{\lam^2}{Z(C)} \cdot \sum_{i=1}^3 (1+\lam)^{c_i}
$$
For $\gamma^u_2(C)$ we need to distinguish between the case where $v$ is occupied and the case where it is unoccupied. Let us assume first that $v$ is occupied, then it contributes with a factor of $\lam$, and accounts for $1$ of the two required occupied neighbors of $u$. Select a vertex $u=u_i$ in the neighborhood of $v$ uniformly at random. Since $v\in I$, we need to include exactly one of its externally uncovered neighbors, contributing a factor of $c_i\lam$. The neighbors $u_j$ with $j\neq i$ cannot be included in $I$ since $v$ is occupied, thus we can include any subset of their externally uncovered neighbors, contributing with a factor of $\frac{(1+\lam)^{c_1+c_2+c_3}}{(1+\lam)^{c_i}}= \frac{Z_+(C)(1+\lam)^{-c_i}}{\lam}$. The probability that $u$ has $2$ occupied neighbors and that $v$ is occupied is
$$
\frac{1}{Z(C)} \cdot \lam \cdot \frac{1}{3} \sum_{i=1}^3 c_i \lam \cdot  \frac{Z_+(C)(1+\lam)^{-c_i}}{\lam} =  \alpha_{C,v}(\lam) \cdot \frac{1}{3} \sum_{i=1}^3 c_i \lam (1+\lam)^{-c_i}\;.
$$
The probability that  $u$ has $2$ occupied neighbors and that $v$ is unoccupied can be computed in a similar way, giving that
$$
\gamma^u_2(C) =  \alpha_{C,v}(\lam) \cdot \frac{1}{3} \sum_{i=1}^3 c_i \lam (1+\lam)^{-c_i} + (1- \alpha_{C,v}(\lam)) \cdot \frac{1}{3} \sum_{i\in\{1,2,3\}\atop c_i=2} \frac{\lam^2}{\lam + (1+ \lam)^{c_i}} .
$$
\end{proof}

Now we can form the following linear program with decision variables $p(C)$ with $ C \in \mathcal C_6$ corresponding to $\Pr[C]$:
\begin{align*}
\alpha_{\mathrm{max}}(\lam) = \max \,    &\sum_{C \in \mathcal C_6} \alpha_{C,v}(\lam) p(C)    \text{ subject to } \\
&\sum_{C \in \mathcal C_6} p(C) = 1 \\
&\sum_{C \in \mathcal C_6} p(C) \cdot  ( \gamma^v_t(C) - \gamma^u_t(C)) =0 \, \, \,  \text{ for } t=0,1,2  \\
&p(C) \ge 0 \, \, \,  \forall C \in \mathcal C_6
\end{align*}

The dual program with decision variables $\Lam_p, \Lam_0, \Lam_1, \Lam_2$ is:

\begin{align*}
&\alpha_{\mathrm{max}}(\lam) = \min \,   \Lam_p   \text{ subject to  } \\
& \Lam_p  +\sum_{t=0}^2 \Lam_t \left [ \gamma^v_t(C) - \gamma^u_t(C)   \right ]   \ge \alpha_{C,v}(\lam) \,\,\,\, \forall C \in \mathcal C_6.
\end{align*}

To show that $\alpha_G(\lam) \le \alpha_{H_{3,6}} (\lam)$ for all cubic $G$ of girth at least $6$, we need to show that $\alpha_{\mathrm{max}}(\lam) \le  \alpha_{H_{3,6}}(\lam)$ (the reverse inequality is immediate since the distribution induced by $H_{3,6}$ is a feasible solution).  To prove this using linear programming duality, it is enough to find a feasible solution to the dual program with $\Lam_p =  \alpha_{H_{3,6}}$.  We define the {\em slack function} of a configuration $C$ as:
\begin{align}\label{eq:slack1}
\text{SLACK}_{\mathrm{max}}( \lam,\Lam_0, \Lam_1, \Lam_2, C) &=  \alpha_{H_{3,6}} -\alpha_{C,v}(\lam) +\sum_{t=0}^2 \Lam_t \left [ \gamma^v_t(C) - \gamma^u_t(C)   \right ].
\end{align}

Our goal is now to find values for the dual variables $\Lam_0^*, \Lam_1^*, \Lam_2^*$ so that 
\begin{align*}
\text{SLACK}_{\mathrm{max}}(\lam, \Lam_0^*, \Lam_1^*, \Lam_2^*,C) \ge 0
\end{align*}
for all configurations $C\in \mathcal C_6$ and $\lam>0$.

Our candidate solution is the Heawood graph $H_{3,6}$ (see Figure~2). There are only $4$ possible configurations arising from $H_{3,6}$: $(0,0,0), (1,0,0), (1,1,1)$, and  $(2,2,2)$.  These correspond respectively to having $3$ or $4$, $2$, $1$ and $0$ vertices from $I$ in the third neighborhood of $v$.

\begin{center}
\begin{figure}[h!]\label{fig:Hea} 
\caption{}
\begin{tabular}{ c}
\begin{tikzpicture}
    \node[shape=circle,fill=black] (A1) at (0,0) {};
     \node[shape=circle,fill=black] (B1) at (-2,1.5) {};
     \node[shape=circle,fill=black] (B2) at (0,1.5) {};
     \node[shape=circle,fill=black] (B3) at (2,1.5) {};
    \node[shape=circle,fill=black] (C1) at (-2.5,3) {};
     \node[shape=circle,fill=black] (C2) at (-1.5,3) {};
     \node[shape=circle,fill=black] (C3) at (-0.5,3) {};
     \node[shape=circle,fill=black] (C4) at (0.5,3) {};
     \node[shape=circle,fill=black] (C5) at (1.5,3) {};
    \node[shape=circle,fill=black] (C6) at (2.5,3) {};
    
    \node[shape=circle,fill=black] (D1) at (-2.1,4.5) {};
     \node[shape=circle,fill=black] (D2) at (-0.7,4.5) {};
     \node[shape=circle,fill=black] (D3) at (0.7,4.5) {};
     \node[shape=circle,fill=black] (D4) at (2.1,4.5) {};
     
     \draw plot [smooth, tension=2] coordinates { (A1) (B1) };
     \draw plot [smooth, tension=2] coordinates { (A1) (B2) };
     \draw plot [smooth, tension=2] coordinates { (A1) (B3) };
     
     \draw plot [smooth, tension=2] coordinates { (B1) (C1) };
     \draw plot [smooth, tension=2] coordinates { (B1) (C2) };
     \draw plot [smooth, tension=2] coordinates { (B2) (C3) };
     \draw plot [smooth, tension=2] coordinates { (B2) (C4) };
     \draw plot [smooth, tension=2] coordinates { (B3) (C5) };
     \draw plot [smooth, tension=2] coordinates { (B3) (C6) };
     
     \draw plot [smooth, tension=2] coordinates { (C1) (D1) };
     \draw plot [smooth, tension=2] coordinates { (C1) (D2) };
     \draw plot [smooth, tension=2] coordinates { (C2) (D3) };
     \draw plot [smooth, tension=2] coordinates { (C2) (D4) };
     
     \draw plot [smooth, tension=2] coordinates { (C3) (D1) };
     \draw plot [smooth, tension=2] coordinates { (C3) (D3) };
     \draw plot [smooth, tension=2] coordinates { (C4) (D2) };
     \draw plot [smooth, tension=2] coordinates { (C4) (D4) };
     
     \draw plot [smooth, tension=2] coordinates { (C5) (D1) };
     \draw plot [smooth, tension=2] coordinates { (C5) (D4) };
     \draw plot [smooth, tension=2] coordinates { (C6) (D2) };
     \draw plot [smooth, tension=2] coordinates { (C6) (D3) };
\end{tikzpicture}
\\
\\
Heawood Graph $H_{3,6}$ viewed from a vertex
  \end{tabular}
  \end{figure}
  \end{center}

If we set the dual constraints to hold with equality for the configurations $(0,0,0), (1,0,0)$, and $(1,1,1)$,  and set $\Lam_p = \alpha_{H_{3,6}}$, we get the following system of equations.
{\smaller
\begin{align*}
&\frac{\lam}{\lam + (1+\lam)^3}= \alpha_{H_{3,6}} + \Lam_0 \left [ \frac{1+ 2\lam}{\lam + (1+\lam)^3} -1  \right ] + \Lam_1 \cdot \frac{2 \lambda }{(\lambda +1)^3+\lambda }  +\Lam_2 \frac{3 \lam^2}{\lam + (1+\lam)^3}      \\
&\frac{\lambda }{2 \lambda ^2+4 \lambda +1}= \alpha_{H_{3,6}}- \Lam_0 \frac{5 \lambda  (\lambda +1)}{6 \lambda ^2+12 \lambda +3}   - \Lam_1  \frac{\lambda  \left(\lambda ^2-2 \lambda -5\right)}{3 (\lambda +1) \left(2 \lambda ^2+4 \lambda +1\right)} 
 + \Lam_2 \frac{\lambda ^2 (3 \lambda +8)}{3 \left(2 \lambda ^3+6 \lambda ^2+5 \lambda +1\right)}   \\
&\frac{\lambda  (\lambda +1)^3}{\lambda  (\lambda +1)^3+(2 \lambda +1)^3}=  \\
&\alpha_{H_{3,6}}+  \Lam_0 \frac{\lambda  \left(\lambda ^3-2 \lambda -1\right)}{\lambda ^4+11 \lambda ^3+15 \lambda ^2+7 \lambda +1} + \Lam_1  \frac{\lambda -2 \lambda ^3}{\lambda ^4+11 \lambda ^3+15 \lambda ^2+7 \lambda +1}   + \Lam_2 \frac{\lambda ^2 \left(-\lambda ^2+\lambda +2\right)}{\lambda ^4+11 \lambda ^3+15 \lambda ^2+7 \lambda +1}  
\end{align*}
}

Solving these equations give candidate values for the dual variables.
\begin{claim}
\label{claim:dualFeas}
With the following assignments to the dual variables, 
\[ \text{SLACK}_{\mathrm{max}}( \lam,\Lam_0^*, \Lam_1^*, \Lam_2^*, C) \ge 0 \]
 for all configurations $C\in \mathcal C_6$. 
\begin{align*}
\Lam_0^* & = \frac{- 3 -27 \lam - 94 \lam^2  -139 \lam^3 -20 \lam^4 +139 \lam^5 +124 \lam^6 + 45 \lam^7 + 9 \lam^8 + \lam^9 }{(1+\lam) (1+2\lam) \cdot P_{H_{3,6}}(\lambda)} \\
\Lam_1^* &= \frac{- 3 -24 \lam - 73 \lam^2  -99 \lam^3 -25 \lam^4 +63 \lam^5 +55 \lam^6 +15 \lam^7 + \lam^8 }{(1+2\lam) \cdot P_{H_{3,6}}(\lambda)} \\
\Lam_2^* &=  \frac{- 3 -27 \lam - 94 \lam^2  -160 \lam^3 -132 \lam^4 -46 \lam^5 -3 \lam^6 + \lam^7 }{(1+2\lam) \cdot P_{H_{3,6}}(\lambda)}  
\end{align*}
\end{claim}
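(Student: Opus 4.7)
The plan is to verify the inequality directly for each of the ten elements of $\mathcal C_6$ by substituting the closed forms for $\alpha_{C,v}$, $\gamma^v_t(C)$, $\gamma^u_t(C)$ from Lemma~\ref{lem:g6functions} together with the proposed values $\Lam_0^*, \Lam_1^*, \Lam_2^*$ into~\eqref{eq:slack1}. After clearing denominators, the slack at a fixed $C$ becomes a rational function
\[
\text{SLACK}_{\mathrm{max}}(\lam, \Lam_0^*, \Lam_1^*, \Lam_2^*, C) = \frac{N_C(\lam)}{D_C(\lam)},
\]
where $D_C(\lam)$ is a product of the manifestly positive polynomials $Z(C)$, $(1+\lam)$, $(1+2\lam)$, and $P_{H_{3,6}}(\lam)$. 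Hence $D_C(\lam) > 0$ for $\lam > 0$, and positivity of the slack reduces to positivity of the single-variable polynomial $N_C(\lam)$.

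For the four configurations $(0,0,0), (0,0,1), (1,1,1), (2,2,2)$ that appear in the local view of $H_{3,6}$, the vanishing $N_C \equiv 0$ is expected. The first three are tight by construction: $\Lam_0^*, \Lam_1^*, \Lam_2^*$ were obtained as the solution of the $3\times 3$ linear system displayed immediately before the claim. For $(2,2,2)$ the identity $N_C \equiv 0$ is the consistency check that the candidate Heawood extremizer is compatible with the dual bound, and I would verify it as an explicit polynomial identity using the formulas of Lemma~\ref{lem:g6functions} specialized to $c_1 = c_2 = c_3 = 2$.

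For each of the remaining six configurations, namely $(0,0,2), (0,1,1), (0,1,2), (0,2,2), (1,1,2), (1,2,2)$, I would expand $N_C(\lam)$, factor out the largest power of $\lam$ dividing it, and certify that what remains is a polynomial with nonnegative integer coefficients. Factoring out the correct power of $\lam$ is natural because $\alpha_{H_{3,6}}(\lam) = O(\lam)$ and the slack vanishes to a predictable order at $\lam = 0$ when several of the $c_i$ are zero, so extracting this low-order behaviour collapses a potentially alternating polynomial into one whose coefficients are visibly positive.

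The main obstacle is bulk rather than subtlety: the polynomials $N_C(\lam)$ have degree in the mid-teens with integer coefficients that are not small, and the verification is most cleanly executed in a computer algebra system. I would defer the coefficient tables to the appendix and ancillary files announced in Section~\ref{sec:related}, while the body of the proof records only that each of the six polynomials, after removing its common $\lam$ factor, has nonnegative coefficients. No sum-of-squares rearrangement appears to be required in one variable, since the positivity can be witnessed coefficient by coefficient once the right monomial prefactor has been divided out.
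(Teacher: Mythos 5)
Your proposal is essentially the paper's own proof: both clear denominators against the manifestly positive product of $Z(C)$, $P_{H_{3,6}}(\lam)$ and the remaining linear factors, reduce to a single-variable polynomial positivity check for each of the ten configurations, observe vanishing exactly on the four Heawood configurations $(0,0,0),(0,0,1),(1,1,1),(2,2,2)$, and for the other six factor out the common power of $\lam$ and verify the remaining coefficients are nonnegative. The only cosmetic difference is the choice of positive multiplier (the paper uses $3(\lam+2)P_{H_{3,6}}(\lam)Z(C)$ and lists the ten resulting polynomials explicitly), which does not change the argument.
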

In particular, Claim~\ref{claim:dualFeas} shows that in the primal $\alpha_{\mathrm{max}}(\lam) = \alpha_{H_{3,6}}(\lam)$.  To prove Claim~\ref{claim:dualFeas} we will show that for all $C \in \mathcal C_6$, the following scaling of the slack function
\begin{align}
\label{eq:g5Ffunc}
F_{\mathrm{max}}(C):= 3 (\lam +2) \cdot P_{H_{3,6}}(\lam) \cdot Z(C) \cdot \text{SLACK}_{\mathrm{max}}( \lam,\Lam_0^*, \Lam_1^*, \Lam_2^*,C) ,
\end{align}
is identically $0$ if $C$ is in the support of the Heawood graph and a polynomial in $\lam$ with positive coefficients otherwise. This suffices to prove  Claim~\ref{claim:dualFeas} since $ 3 (\lam +2) \cdot P_{H_{3,6}}(\lam) \cdot Z(C) $ is itself a polynomial in $\lam$ with positive coefficients. 

 Using \eqref{eq:HeaOcc} and Lemma~\ref{lem:g6functions}, we calculate:
\begin{align*}
F_{\mathrm{max}}((0,0,0))& = 0   \\
F_{\mathrm{max}}((0,0,1))& =  0 \\
F_{\mathrm{max}}((0,0,2))& =  \lambda ^5 (\lambda +1) (\lambda +2) \left(\lambda ^5+13 \lambda ^4+47 \lambda ^3+69 \lambda ^2+36 \lambda +6\right) \\
F_{\mathrm{max}}((0,1,1))& = \lambda ^6 \left(2 \lambda ^5+11 \lambda ^4+30 \lambda ^3+41 \lambda ^2+20 \lambda +3\right)   \\
F_{\mathrm{max}}((0,1,2))& = \lambda ^5 \left(4 \lambda ^7+47 \lambda ^6+209 \lambda ^5+458 \lambda ^4+523 \lambda ^3+303 \lambda ^2+84 \lambda +9\right)  \\
F_{\mathrm{max}}((0,2,2))& = \lambda ^5 (\lambda +2) \left(4 \lambda ^7+49 \lambda ^6+218 \lambda ^5+463 \lambda ^4+502 \lambda ^3+269 \lambda ^2+66 \lambda +6\right)  \\
F_{\mathrm{max}}((1,1,1))& =  0  \\
F_{\mathrm{max}}((1,1,2))& = \lambda ^5 (\lambda +1)^3 \left(3 \lambda ^4+37 \lambda ^3+77 \lambda ^2+39 \lambda +6\right)  \\
F_{\mathrm{max}}((1,2,2))& = 2 \lambda ^5 (\lambda +1)^3 \left(\lambda ^5+15 \lambda ^4+52 \lambda ^3+62 \lambda ^2+24 \lambda +3\right) \\
F_{\mathrm{max}}((2,2,2))& = 0.  
\end{align*}
Indeed for all $C$ in the support of the Heawood graph $F_{\mathrm{max}}(C)=0$, and for all other $C$, $F_{\mathrm{max}}(C)$ is a polynomial in $\lam$ with positive coefficients. This proves Claim~\ref{claim:dualFeas} and thus shows that $\alpha_G(\lam) \le \alpha_{H_{3,6}}(\lam)$ for all $\lam >0$ and all cubic $G$ of girth at least $6$.  Uniqueness follows from complementary slackness and the fact that we have $4$ linearly independent constraints; therefore the only feasible distribution whose support is contained in the support of the Heawood graph is the Heawood graph.

\section{Girth at least $5$}
\label{sec:g5}
Now we extend the proof to include graphs of girth $5$.  
If $G$ is cubic and has girth at least than $5$, then every vertex $v \in V(G)$ has $6$ distinct second neighbors, but now its second neighborhood  may contain some edges.

Draw an independent set $I\in \mathcal{I}(G)$ from the hard-core distribution on $G$ with fugacity $\lambda>0$. Recall that a vertex $u$ of the second neighborhood of $v$ is externally uncovered if none of its neighbors at distance $3$ from $v$ are in $I$. For $i\in\{1,2,3\}$, let $u_i$ be the neighbors of $v$ and for $j\in\{1,2\}$ let $w_{ij}$ be the neighbors of $u_i$ that are second neighbors of $v$.  Let $C=(W, E_{12}, E_{22})$ be a configuration where $W$ is the set of second neighbors of $v$ that are externally uncovered, $E_{12}$ is the set of edges between first neighbors of $v$ and $W$, and $E_{22}$ is the set of edges within $W$ in the configuration $C$.

Let $\mathcal C_5$ be the set of all possible configurations $C$ that can arise from a cubic graph of girth at least $5$. The possible configurations are the following:

\begin{center}
\begin{tabular}{ c c c }
  \begin{tikzpicture}
    \node[shape=circle,draw=black] (A) at (0,0) {};
    \node[shape=circle,draw=black] (B) at (0.5,0) {};
    \node[shape=circle,draw=black] (C) at (1.2,0) {};
    \node[shape=circle,draw=black] (D) at (1.7,0) {};
    \node[shape=circle,draw=black] (E) at (2.4,0) {};
    \node[shape=circle,draw=black] (F) at (2.9,0) {} ;
\end{tikzpicture}
  &
  \hspace{1cm}
\begin{tikzpicture}
    \node[shape=circle,fill=black] (A) at (0,0) {};
    \node[shape=circle,draw=black] (B) at (0.5,0) {};
    \node[shape=circle,fill=black] (C) at (1.2,0) {};
    \node[shape=circle,draw=black] (D) at (1.7,0) {};
    \node[shape=circle,draw=black] (E) at (2.4,0) {};
    \node[shape=circle,draw=black] (F) at (2.9,0) {} ;

\draw plot [smooth, tension=2] coordinates { (0,0.15) (0.6,0.5) (1.2,0.15)};
\end{tikzpicture}
  & 
\hspace{1cm}
\begin{tikzpicture}
    \node[shape=circle,fill=black] (A) at (0,0) {};
    \node[shape=circle,draw=black] (B) at (0.5,0) {};
    \node[shape=circle,fill=black] (C) at (1.2,0) {};
    \node[shape=circle,draw=black] (D) at (1.7,0) {};
    \node[shape=circle,fill=black] (E) at (2.4,0) {};
    \node[shape=circle,draw=black] (F) at (2.9,0) {} ;

\draw plot [smooth, tension=2] coordinates { (0,0.15) (0.6,0.4) (1.2,0.15)};
\draw plot [smooth, tension=2] coordinates { (0,0.15) (1.2,0.6) (2.4,0.15)};
\end{tikzpicture}  
 \\
$C_0(x_1,x_2,x_3)$ & $C_1(x_1,x_2,x_3)$ & \hspace{1cm} $C_2(x_1,x_2,x_3)$\\
 \\
  \begin{tikzpicture}
    \node[shape=circle,fill=black] (A) at (0,0) {};
    \node[shape=circle,fill=black] (B) at (0.5,0) {};
    \node[shape=circle,fill=black] (C) at (1.2,0) {};
    \node[shape=circle,fill=black] (D) at (1.7,0) {};
    \node[shape=circle,draw=black] (E) at (2.4,0) {};
    \node[shape=circle,draw=black] (F) at (2.9,0) {} ;
    
\draw plot [smooth, tension=2] coordinates { (0,0.15) (0.85 ,0.6) (1.7,0.15)};
\draw plot [smooth, tension=2] coordinates { (0.5,0.15) (0.85,0.4) (1.2,0.15)};
\end{tikzpicture}
  &
  \hspace{1cm}
\begin{tikzpicture}
    \node[shape=circle,fill=black] (A) at (0,0) {};
    \node[shape=circle,fill=black] (B) at (0.5,0) {};
    \node[shape=circle,fill=black] (C) at (1.2,0) {};
    \node[shape=circle,draw=black] (D) at (1.7,0) {};
    \node[shape=circle,fill=black] (E) at (2.4,0) {};
    \node[shape=circle,draw=black] (F) at (2.9,0) {} ;

\draw plot [smooth, tension=2] coordinates { (0,0.15) (0.6 ,0.6) (1.2,0.15)};
\draw plot [smooth, tension=2] coordinates { (0.5,0.15) (1.45,0.4) (2.4,0.15)};
\end{tikzpicture}
  & 
\hspace{1cm}
\begin{tikzpicture}
    \node[shape=circle,fill=black] (A) at (0,0) {};
    \node[shape=circle,fill=black] (B) at (0.5,0) {};
    \node[shape=circle,fill=black] (C) at (1.2,0) {};
    \node[shape=circle,fill=black] (D) at (1.7,0) {};
    \node[shape=circle,fill=black] (E) at (2.4,0) {};
    \node[shape=circle,fill=black] (F) at (2.9,0) {} ;

\draw plot [smooth, tension=2] coordinates { (0.5,0) (1.2,0)};
\draw plot [smooth, tension=2] coordinates { (1.7,0) (2.4,0)};
\draw plot [smooth, tension=2] coordinates { (0,0.15) (1.45,0.5)  (2.9,0.15)};
\end{tikzpicture}  
 \\ 
 $C_3(x_1,x_2,x_3)$ & \hspace{1cm} $C_4(x_1,x_2,x_3)$ & \hspace{1cm} $C_5(x_1,x_2,x_3)$\\
 \\
  \begin{tikzpicture}
    \node[shape=circle,fill=black] (A) at (0,0) {};
    \node[shape=circle,fill=black] (B) at (0.5,0) {};
    \node[shape=circle,fill=black] (C) at (1.2,0) {};
    \node[shape=circle,draw=black] (D) at (1.7,0) {};
    \node[shape=circle,fill=black] (E) at (2.4,0) {};
    \node[shape=circle,draw=black] (F) at (2.9,0) {} ;
    
\draw plot [smooth, tension=2] coordinates { (0,0.15) (0.6 ,0.4) (1.2,0.15)};
\draw plot [smooth, tension=2] coordinates { (0.5,0.15) (1.45,0.6) (2.4,0.15)};
\draw plot [smooth, tension=2] coordinates { (1.2,0.15) (1.8,0.4) (2.4,0.15)};

\end{tikzpicture}
  &
  \hspace{1cm}
\begin{tikzpicture}
    \node[shape=circle,fill=black] (A) at (0,0) {};
    \node[shape=circle,fill=black] (B) at (0.5,0) {};
    \node[shape=circle,fill=black] (C) at (1.2,0) {};
    \node[shape=circle,fill=black] (D) at (1.7,0) {};
    \node[shape=circle,fill=black] (E) at (2.4,0) {};
    \node[shape=circle,draw=black] (F) at (2.9,0) {} ;

\draw plot [smooth, tension=2] coordinates { (0,0.15) (0.6 ,0.4) (1.2,0.15)};
\draw plot [smooth, tension=2] coordinates { (0.5,0.15) (1.1,0.4) (1.7,0.15)};
\draw plot [smooth, tension=2] coordinates { (0,0.15) (1.2,0.6) (2.4,0.15)};
\end{tikzpicture}
  & 
\hspace{1cm}
\begin{tikzpicture}
    \node[shape=circle,fill=black] (A) at (0,0) {};
    \node[shape=circle,draw=black] (B) at (0.5,0) {};
    \node[shape=circle,fill=black] (C) at (1.2,0) {};
    \node[shape=circle,fill=black] (D) at (1.7,0) {};
    \node[shape=circle,fill=black] (E) at (2.4,0) {};
    \node[shape=circle,fill=black] (F) at (2.9,0) {} ;

\draw plot [smooth, tension=2] coordinates { (0,0.15) (0.6 ,0.4) (1.2,0.15)};
\draw plot [smooth, tension=2] coordinates { (1.7,0.15) (2.3,0.4) (2.9,0.15)};
\draw plot [smooth, tension=2] coordinates { (0,0.15) (1.2,0.6) (2.4,0.15)};
\end{tikzpicture}  
  \\ 
$C_6(x_1,x_2,x_3)$ & \hspace{1cm} $C_7(x_1,x_2,x_3)$ & \hspace{1cm} $C_8(x_1,x_2,x_3)$\\
 \\
  \begin{tikzpicture}
    \node[shape=circle,fill=black] (A) at (0,0) {};
    \node[shape=circle,fill=black] (B) at (0.5,0) {};
    \node[shape=circle,fill=black] (C) at (1.2,0) {};
    \node[shape=circle,fill=black] (D) at (1.7,0) {};
    \node[shape=circle,fill=black] (E) at (2.4,0) {};
    \node[shape=circle,fill=black] (F) at (2.9,0) {} ;
    
\draw plot [smooth, tension=2] coordinates { (0,0.15) (0.6 ,0.4) (1.2,0.15)};
\draw plot [smooth, tension=2] coordinates { (0,0.15) (1.2 ,0.6) (2.4,0.15)};
\draw plot [smooth, tension=2] coordinates { (0.5,0.15) (1.7,0.5) (2.9,0.15)};
\draw plot [smooth, tension=2] coordinates { (1.7,0.15) (2.3,0.4) (2.9,0.15)};

\end{tikzpicture}
  &
  \hspace{1cm}
\begin{tikzpicture}
    \node[shape=circle,fill=black] (A) at (0,0) {};
    \node[shape=circle,fill=black] (B) at (0.5,0) {};
    \node[shape=circle,fill=black] (C) at (1.2,0) {};
    \node[shape=circle,fill=black] (D) at (1.7,0) {};
    \node[shape=circle,fill=black] (E) at (2.4,0) {};
    \node[shape=circle,fill=black] (F) at (2.9,0) {} ;

\draw plot [smooth, tension=2] coordinates { (0,0.15) (0.6 ,0.4) (1.2,0.15)};
\draw plot [smooth, tension=2] coordinates { (0,0.15) (1.2 ,0.6) (2.4,0.15)};
\draw plot [smooth, tension=2] coordinates { (0.5,0.15) (1.7,0.5) (2.9,0.15)};
\draw plot [smooth, tension=2] coordinates { (0.5,0.15) (1.1,0.4) (1.7,0.15)};
\end{tikzpicture}
  & 
\hspace{1cm}
\begin{tikzpicture}
    \node[shape=circle,fill=black] (A) at (0,0) {};
    \node[shape=circle,fill=black] (B) at (0.5,0) {};
    \node[shape=circle,fill=black] (C) at (1.2,0) {};
    \node[shape=circle,fill=black] (D) at (1.7,0) {};
    \node[shape=circle,fill=black] (E) at (2.4,0) {};
    \node[shape=circle,fill=black] (F) at (2.9,0) {} ;

\draw plot [smooth, tension=2] coordinates { (0,0.15) (0.6 ,0.4) (1.2,0.15)};
\draw plot [smooth, tension=2] coordinates { (0.5,0.15) (1.45 ,0.6) (2.4,0.15)};
\draw plot [smooth, tension=2] coordinates { (1.2,0.15) (1.8,0.4) (2.4,0.15)};
\draw plot [smooth, tension=2] coordinates { (1.7,0.15) (2.3,0.5) (2.9,0.15)};
\end{tikzpicture}  
 \\ 
$C_9(x_1,x_2,x_3)$ & \hspace{1cm} $C_{10}(x_1,x_2,x_3)$ & \hspace{1cm} $C_{11}(x_1,x_2,x_3)$\\
 \\
  \begin{tikzpicture}
    \node[shape=circle,fill=black] (A) at (0,0) {};
    \node[shape=circle,fill=black] (B) at (0.5,0) {};
    \node[shape=circle,fill=black] (C) at (1.2,0) {};
    \node[shape=circle,fill=black] (D) at (1.7,0) {};
    \node[shape=circle,fill=black] (E) at (2.4,0) {};
    \node[shape=circle,draw=black] (F) at (2.9,0) {} ;
    
\draw plot [smooth, tension=2] coordinates { (0,0.15) (0.6 ,0.4) (1.2,0.15)};
\draw plot [smooth, tension=2] coordinates { (0.5,0.15) (1.1 ,0.5) (1.7,0.15)};
\draw plot [smooth, tension=2] coordinates { (0.5,0.15) (1.45 ,0.6) (2.4,0.15)};
\draw plot [smooth, tension=2] coordinates { (1.2,0.15) (1.8,0.5) (2.4,0.15)};
\end{tikzpicture}
  &
  \hspace{1cm}
\begin{tikzpicture}
    \node[shape=circle,fill=black] (A) at (0,0) {};
    \node[shape=circle,fill=black] (B) at (0.5,0) {};
    \node[shape=circle,fill=black] (C) at (1.2,0) {};
    \node[shape=circle,fill=black] (D) at (1.7,0) {};
    \node[shape=circle,fill=black] (E) at (2.4,0) {};
    \node[shape=circle,fill=black] (F) at (2.9,0) {} ;

\draw plot [smooth, tension=2] coordinates { (0,0.15) (0.6 ,0.4) (1.2,0.15)};
\draw plot [smooth, tension=2] coordinates { (0.5,0.15) (1.1 ,0.5) (1.7,0.15)};
\draw plot [smooth, tension=2] coordinates { (0.5,0.15) (1.45 ,0.6) (2.4,0.15)};
\draw plot [smooth, tension=2] coordinates { (1.2,0.15) (1.8,0.5) (2.4,0.15)};
\draw plot [smooth, tension=2] coordinates { (1.7,0.15) (2.3 ,0.4) (2.9,0.15)};
\end{tikzpicture}
  & 
\hspace{1cm}
\begin{tikzpicture}
    \node[shape=circle,fill=black] (A) at (0,0) {};
    \node[shape=circle,fill=black] (B) at (0.5,0) {};
    \node[shape=circle,fill=black] (C) at (1.2,0) {};
    \node[shape=circle,fill=black] (D) at (1.7,0) {};
    \node[shape=circle,fill=black] (E) at (2.4,0) {};
    \node[shape=circle,fill=black] (F) at (2.9,0) {} ;

\draw plot [smooth, tension=2] coordinates { (0,0.15) (0.6 ,0.4) (1.2,0.15)};
\draw plot [smooth, tension=2] coordinates { (0.5,0.15) (1.1 ,0.5) (1.7,0.15)};
\draw plot [smooth, tension=2] coordinates { (0.5,0.15) (1.45 ,0.6) (2.4,0.15)};
\draw plot [smooth, tension=2] coordinates { (1.2,0.15) (1.8,0.5) (2.4,0.15)};
\draw plot [smooth, tension=2] coordinates { (1.7,0.15) (2.3 ,0.4) (2.9,0.15)};
\draw plot [smooth, tension=2] coordinates { (0,0.15) (1.45 ,0.7) (2.9,0.15)};

\end{tikzpicture}  
 \\ 
 $C_{12}(x_1,x_2,x_3)$ & \hspace{1cm} $C_{13}(x_1,x_2,x_3)$ & \hspace{1cm} $C_{14}(x_1,x_2,x_3)$\\
\end{tabular}
\end{center}

\noindent where $x_i\in\{0,1,2\}$ indicates the number of externally uncovered neighbors of $u_i$ that are second neighbors of $v$ and that are included in the configuration as isolated vertices  in the graph induced by the second neighborhood. The \emph{type} of a configuration is the configuration up to the number of free second neighbors of $v$ that are isolated in the graph induced by the second neighborhood. For instance, the type of $C_1(1,1,1)$ is $C_1$. In Appendix~\ref{app:conf} we give a proof that, up to symmetries, these $15$ configuration types are all the possible ones for cubic graphs with girth at least $5$.

Up to relabeling of the first and second neighbors of $v$, these $14$ types give rise to $46$ configurations to consider. 
For instance, the type $C_0$ gives rise to $10$ configurations: $C_0(0,0,0)$, $C_0(1,0,0)$, $C_0(2,0,0)$ , $C_0(1,1,0)$, $C_0(2,1,0)$, $C_0(2,2,0)$, $C_0(1,1,1)$, $C_0(2,1,1)$, $C_0(2,2,1)$ and $C_0(2,2,2)$ and they correspond exactly to the set configurations $\mathcal C_6$ considered above. As another example, there are $3$ configurations corresponding to the type $C_6$: $C_6(0,0,0)$, $C_6(0,1,0)$ and $C_6(0,1,1)$. The configurations $C_6(0,1,0)$ and $C_6(0,0,1)$ are equivalent with respect to the functions in the optimization problem and so we include just one in our set $\mathcal C_5$.

We defer the formulae for $\alpha_{C,v}(\lam), \gamma^v_t(C), \gamma^u_t(C)$ until Section~\ref{sec:computFunc} where we give the formulae in a more general case.

We define the same linear program, dual program, and slack function as in Section~\ref{sec:girth6}. The optimality of $\alpha_{H_{3,6}}$ now follows from the following claim:
\begin{claim}
\label{claim:g5ub}
With the assignments to the dual variables given as in Claim~\ref{claim:dualFeas}, \\ $\text{SLACK}_{\mathrm{max}}(\lam, \Lam_0^*, \Lam_1^*, \Lam_2^*,C) \ge 0$ for all configurations $C\in \mathcal C_5$.
\end{claim}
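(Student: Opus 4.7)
The plan is to reuse the very same dual assignment $(\Lam_0^*, \Lam_1^*, \Lam_2^*)$ constructed in Claim~\ref{claim:dualFeas} and to verify that, with these values fixed, the slack function remains non-negative on the enlarged configuration set $\mathcal C_5$. Heuristically this should work because $H_{3,6}$ has girth $6$, so its occupancy-fraction-based dual certificate is already tuned to the optimum; introducing $5$-cycle configurations $C_1,\dots,C_{14}$ (which the Heawood graph never exhibits) should produce strictly positive slack. The task reduces to checking $46$ polynomial inequalities, one per configuration in $\mathcal C_5$, as opposed to the $10$ polynomials handled in Section~\ref{sec:girth6}.

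The first ingredient is to obtain closed-form expressions for $\alpha_{C,v}(\lam)$, $\gamma^v_t(C)$, and $\gamma^u_t(C)$ on each type $C_j(x_1,x_2,x_3)$. The derivations in Lemma~\ref{lem:g6functions} used essentially that the $w_{ij}$ form an independent set; for $\mathcal C_5$ the edges in $E_{12}$ and $E_{22}$ add further independence constraints, so the partition functions $Z_-(C)$ and $Z_+(C)$ must be computed by factorizing over the connected components of the subgraph induced on the externally uncovered second neighbors (after conditioning on occupation of $v$ and $u_1,u_2,u_3$). This gives closed-form rational expressions in $\lam$ for every configuration type, which are recorded in Section~\ref{sec:computFunc}.

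With these formulae in hand, one forms the scaled slack
\[
F_{\max}(C) := 3(\lam+2)\cdot P_{H_{3,6}}(\lam) \cdot Z(C) \cdot \text{SLACK}_{\max}(\lam,\Lam_0^*,\Lam_1^*,\Lam_2^*,C),
\]
exactly as in \eqref{eq:g5Ffunc}. For each of the $10$ type-$C_0$ configurations one recovers the polynomials already tabulated in Section~\ref{sec:girth6}. For each of the remaining $36$ configurations arising from types $C_1,\dots,C_{14}$, one must verify that $F_{\max}(C)$ is a polynomial in $\lam$ with non-negative coefficients; this is enough, since $3(\lam+2)P_{H_{3,6}}(\lam)Z(C)$ is itself a polynomial with positive coefficients.

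The principal obstacle is purely computational: the number of cases multiplies by almost fivefold, the polynomials have higher degree than in Section~\ref{sec:girth6} (because the local partition functions $Z(C)$ now contain factors encoding $5$-cycles), and the verification is impractical by hand. I would carry it out via symbolic computation, deferring the explicit list of polynomials and their coefficient sequences to an appendix and ancillary files, as the authors propose. Care is needed to enumerate the $\mathcal C_5$-configurations without redundancy, exploiting the $S_3$ symmetry on $u_1,u_2,u_3$ together with the $\mathbb Z_2$ symmetries on each $\{w_{i1},w_{i2}\}$; the configuration catalogue in Appendix~\ref{app:conf} is essential here. Given that the $36$ polynomial-coefficient checks go through, Claim~\ref{claim:g5ub} is immediate, and uniqueness of $H_{3,6}$ as the extremizer then follows exactly as in Section~\ref{sec:girth6} by complementary slackness: any optimal primal must be supported on $\{C : F_{\max}(C)=0\}$, which by inspection is precisely the support of $H_{3,6}$.
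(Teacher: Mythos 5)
Your approach to Claim~\ref{claim:g5ub} itself is exactly the paper's: reuse the dual certificate from Claim~\ref{claim:dualFeas}, form the scaled slack $F_{\mathrm{max}}(C)$ of \eqref{eq:F}, and verify by symbolic computation that each of the $46$ resulting expressions is either identically zero or a polynomial in $\lam$ with non-negative coefficients. That part is fine.

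However, your closing remark on uniqueness contains a genuine error. You assert that the zero set $\{C : F_{\mathrm{max}}(C)=0\}$ ``is precisely the support of $H_{3,6}$,'' but the paper explicitly finds that $F_{\mathrm{max}}(C_1(1,1,0))\equiv 0$ even though $C_1(1,1,0)$ (a configuration with one edge inside the second neighborhood) is \emph{not} in the support of the Heawood graph. So complementary slackness alone does not pin down $H_{3,6}$: it only restricts the support of an optimal primal to the five configurations $C_0(0,0,0)$, $C_0(1,0,0)$, $C_0(1,1,1)$, $C_0(2,2,2)$, and $C_1(1,1,0)$. The paper then needs a separate combinatorial argument (the unnumbered claim in Section~\ref{sec:g5}) to exclude $C_1(1,1,0)$: if some vertex had an edge in its second neighborhood, then the empty independent set would realize a configuration $C_i(2,2,2)$ with $i\neq 0$, all of which have zero probability by complementary slackness, a contradiction. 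Only after that does one reduce to $\mathcal C_6$ and appeal to the fact that $H_{3,6}$ is the unique cubic graph of girth $5$ on $14$ vertices. Your outline would miss this step, so while the claim as stated (non-negativity) is proved correctly, the uniqueness assertion as you phrased it would not follow.
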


Again we  scale the slack function by the same positive polynomial used in the previous section:
\begin{align}\label{eq:F}
F_{\mathrm{max}}(C):= 3 (\lam +2) \cdot P_{H_{3,6}}(\lam) \cdot Z(C) \cdot \text{SLACK}_{\mathrm{max}}(\lam, \Lam_0^*, \Lam_1^*, \Lam_2^*,C) ,
\end{align}

One can verify that $F_{\mathrm{max}}(C)$ is either identically $0$ or a polynomial in $\lam$ with positive coefficients. We have verified this by having a computer program compute $F_{\mathrm{max}}(C)$ for all $C \in \mathcal C_5$ and collect coefficients. The computer code and printout is included as an ancillary file.

For all $C\in \mathcal C_5$ in the support of the Heawood graph $F_{\mathrm{max}}(C)\equiv0$. For all other $C\in \mathcal C_5$ different than $C_1(1,1,0)$, $F_{\mathrm{max}}(C)$ is a polynomial in $\lam$ with positive coefficients. However, we also find that $F_{\mathrm{max}}(C_1(1,1,0))\equiv0$. This proves that the dual solution is feasible but does not prove uniqueness of the solution.

In order to prove that unions of copies of $H_{3,6}$ are the only graphs that maximize $\alpha_G(\lam)$ among all $3$-regular graphs $G$ of girth at least $5$, we first need to exclude the configuration $C_1(1,1,0)$. Let $C$ be the random configuration obtained by choosing $I\in \mathcal{I}(G)$ according to the hard-core model and $v\in V(G)$ uniformly at random.
\begin{claim}
 Let $G$ be a $3$-regular graph of girth at least $5$ with $\alpha_G(\lam) = \alpha_{H_{3,6}}(\lam)$. 
 Then $\Pr[C=C_1(1,1,0)]=0$.
\end{claim}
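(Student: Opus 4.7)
The plan is to combine complementary slackness for the linear program of Section~\ref{sec:g5} with the fact that the empty independent set receives positive probability under the hard-core model. First I would observe that since $\alpha_G(\lambda) = \alpha_{H_{3,6}}(\lambda)$, the distribution $p(C) = \Pr[C]$ induced by $G$ is an optimal primal solution; paired with the feasible dual from Claim~\ref{claim:dualFeas}, complementary slackness forces the dual constraint to be tight at every configuration in the support of $p$, i.e.\ $F_{\max}(C)=0$ for every such $C$. By the tabulation in Section~\ref{sec:g5}, the vanishing set of $F_{\max}$ is
\[
S = \{C_0(0,0,0),\, C_0(1,0,0),\, C_0(1,1,1),\, C_0(2,2,2),\, C_1(1,1,0)\},
\]
so every configuration with positive probability lies in $S$.

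Then I would argue by contradiction. Suppose $\Pr[C=C_1(1,1,0)]>0$. Then there is a vertex $v^*\in V(G)$ at which this configuration is realized with positive probability, and in particular the structural second neighborhood of $v^*$ contains an edge $(w_{11},w_{21})$ between two of its second neighbors lying under different first neighbors, yielding a $5$-cycle $v^*u_1w_{11}w_{21}u_2v^*$ in $G$. Taking this same $v^*$ as the reference vertex, I would condition on the event $I=\emptyset$, which has probability $1/P_G(\lambda)>0$ in the hard-core model. Under $I=\emptyset$ every second neighbor of $v^*$ is trivially externally uncovered, so $W$ is the whole set of six second neighbors and $E_{22}$ coincides with the full structural edge set of the subgraph induced by those six vertices.

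The key step is then a short case analysis on that structural edge set. If the only structural edge inside $v^*$'s second neighborhood is $(w_{11},w_{21})$, then the configuration observed at $v^*$ under $I=\emptyset$ is $C_1(1,1,2)$, since $w_{12}, w_{22}, w_{31}, w_{32}$ are all isolated externally uncovered second neighbors; if instead there are additional structural edges in the second neighborhood, the resulting configuration satisfies $|E_{22}|\ge 2$ and is therefore of some type $C_k$ with $k\ge 2$. In either case the configuration lies in $\mathcal C_5\setminus S$, and by the tabulation in Section~\ref{sec:g5} its $F_{\max}$ value is a nonzero polynomial in $\lambda$ with positive coefficients. Since this configuration occurs with positive probability, complementary slackness is violated, a contradiction, and hence $\Pr[C=C_1(1,1,0)]=0$.

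The main point requiring care is to keep the \emph{structural} type of a configuration, which depends only on $G$, distinct from the dressed configuration $(W,E_{12},E_{22})$, which also depends on $I$. The argument succeeds precisely because the constraint $x_3=0$ built into $C_1(1,1,0)$ flips to $x_3=2$ (or the type itself escapes $C_1$ entirely) once $I$ is taken to be empty, and neither of those outcomes is compatible with the very short admissible list $S$.
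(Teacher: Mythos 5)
Your proof is correct and follows the same strategy as the paper: complementary slackness restricts the support to the five configurations on which $F_{\mathrm{max}}$ vanishes, and then the empty independent set (which has positive probability) forces a forbidden configuration at the witnessing vertex $v^*$. In fact your case analysis is slightly more careful than the paper's: the paper writes that under $I=\emptyset$ the observed configuration is ``$C_i(2,2,2)$ for some $i\neq 0$,'' which is a mild abuse of notation, since for type $C_1$ the parameters are bounded by $(1,1,2)$ (the two endpoints of the internal edge are already in $W$ but not isolated, so they are not counted by the $x_i$). You correctly identify that the fully-uncovered configuration of type $C_1$ is $C_1(1,1,2)$, and that additional structural edges push the type to $C_k$ with $k\ge 2$; either way the resulting configuration lies outside the admissible set, as required.
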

\begin{proof}
 Suppose that $\Pr[C=C_1(1,1,0)]>0$. Let $v^*\in V(G)$ be such that the second neighborhood of $v^*$ in $G$ contains at least one edge. Since $G$ attains the maximum occupancy fraction, complementary slackness tell us that the only configurations that can appear with positive probability are $C_0(0,0,0)$, $C_0(1,0,0)$, $C_0(1,1,1)$, $C_0(2,2,2)$ and $C_1(1,1,0)$. Consider the empty independent set $I_0=\emptyset$. The configuration induced by $I_0$ in the second neighborhood of $v^*$ is of the form $C_i(2,2,2)$ for some $i\neq 0$, but all these configurations have probability $0$ to appear, leading to a contradiction. 
\end{proof}

Therefore, any maximizer has support in $C_0(0,0,0)$, $C_0(1,0,0)$, $C_0(1,1,1)$ and $C_0(2,2,2)$. It suffices to prove that $H_{3,6}$ is the only graph with this support. Fix $v\in V(G)$. First observe that there are no edges within the second neighborhood of $v$. The fact that $\Pr[C=C_0(2,2,1)]=\Pr[C=C_0(2,1,1)]=\Pr[C=C_0(2,2,0)]=0$, implies that every vertex in the third neighborhood of $v$ is adjacent to $3$ vertices in the second neighborhood of $v$. Thus, $G$ is the disjoint union of $3$-regular graphs of girth $5$ and order $14$. Uniqueness now follows from the well-known fact that $H_{3,6}$ is the only such graph (see e.g.~\cite{wong1982cages}).

\section{Proof of Theorem~\ref{thm:d3g4}}
\label{sec:g4}

Here we prove Theorem~\ref{thm:d3g4} by showing that an appropriate linear programming relaxation shows that for all $3$-regular $G$ of girth at least $4$ (triangle-free) and all $\lam \in (0,1]$, 
\[ \alpha_G(\lam) \ge \alpha_{P_{5,2}}(\lam) .\]

We remark that the statement of Theorem~\ref{thm:d3g4} may still be true for some $\lam>1$, but it is not true for $\lam > \lam_3^* \approx 1.84593$, since for $\lam > \lam_3^*$ the occupancy fraction of the $(7,2)$-Generalized Petersen graph is smaller than that of the Petersen graph. In Section~\ref{sec:extensions} we present a conjecture that extends Theorem~\ref{thm:d3g4} for every $\lambda>0$.

\begin{center}

\begin{figure}[h!]\label{fig:Pet} 
\caption{}
\begin{tabular}{ c}
\begin{tikzpicture}
    \node[shape=circle,fill=black] (A1) at (0,0) {};
     \node[shape=circle,fill=black] (B1) at (-2,1.5) {};
     \node[shape=circle,fill=black] (B2) at (0,1.5) {};
     \node[shape=circle,fill=black] (B3) at (2,1.5) {};
    \node[shape=circle,fill=black] (C1) at (-2.5,3) {};
     \node[shape=circle,fill=black] (C2) at (-1.5,3) {};
     \node[shape=circle,fill=black] (C3) at (-0.5,3) {};
     \node[shape=circle,fill=black] (C4) at (0.5,3) {};
     \node[shape=circle,fill=black] (C5) at (1.5,3) {};
    \node[shape=circle,fill=black] (C6) at (2.5,3) {};
     
     \draw plot [smooth, tension=2] coordinates { (A1) (B1) };
     \draw plot [smooth, tension=2] coordinates { (A1) (B2) };
     \draw plot [smooth, tension=2] coordinates { (A1) (B3) };
     
     \draw plot [smooth, tension=2] coordinates { (B1) (C1) };
     \draw plot [smooth, tension=2] coordinates { (B1) (C2) };
     \draw plot [smooth, tension=2] coordinates { (B2) (C3) };
     \draw plot [smooth, tension=2] coordinates { (B2) (C4) };
     \draw plot [smooth, tension=2] coordinates { (B3) (C5) };
     \draw plot [smooth, tension=2] coordinates { (B3) (C6) };

     \draw plot [smooth, tension=2] coordinates { (C2) (C3) };
     \draw plot [smooth, tension=2] coordinates { (C4) (C5) };
     \draw plot [smooth, tension=2] coordinates { (C1) (0,4.5) (C6) };
     \draw plot [smooth, tension=2] coordinates { (C1) (-1,4) (C4) };
     \draw plot [smooth, tension=2] coordinates { (C3) (1,4) (C6) };
      \draw plot [smooth, tension=2] coordinates { (C2) (0,3.5) (C5) };
     
\end{tikzpicture}
\\
\\
Petersen Graph $P_{5,2}$ viewed from a vertex
  \end{tabular}
  \end{figure}
  \end{center}

Since $G$ has girth at least than $4$, each vertex $v \in V(G)$ has no edges in its neighborhood, but now its second neighborhood can contain anywhere from $2$ to $6$ vertices and may also contain edges.

Similarly as in the previous section, we let $C=(W, E_{12},E_{22})$ be a configuration where $W$ is the set of second neighbors of $v$ that are externally uncovered (free second neighborhood of $v$), $E_{12}$ is the set of edges between the first neighborhood of $v$ and the free second  neighborhood of $v$ and $E_{22}$ is the set of edges among the free second neighbors of $v$.  Let $\mathcal C_4$ be the set of all possible configurations $C$ that can arise from a cubic graph of girth at least $4$. Up to symmetries, the different possible configurations are displayed in Appendix~\ref{app:C_4}. For each configuration $C=C^j_i(x_1,\dots,x_s)$, the variables $x_k$ determine if the $k$-th second neighbor is externally covered. This gives a total of $207$ configurations.

The local view of the Petersen Graph $P_{5,2}$ only has one possible configuration: $C^1_{29}(0,0,0,0,0,0)$~(see Figure~4).

As before, we consider $\gamma_t^v(C) = \Pr[v \text{ has } t \text{ occupied neighbors}|C] $ and $\gamma_t^u(C) = \Pr[u \text{ has } t \text{ occupied neighbors}|C]$, where $u$ is chosen uniformly at random from $\{u_1,u_2,u_3\}$. 

We will use the following linear program  for $\lam \in (0, 1]$:
\begin{align*}
\alpha_{\mathrm{min}}(\lam) = \min \,    &\sum_{C \in \mathcal C_4} p(C) \alpha_{C,v}(\lam)   \text{ subject to } \\
&\sum_{C \in \mathcal C_4} p(C) = 1 \\
&\sum_{C \in \mathcal C_4} p(C) \cdot  ( \gamma^v_t(C) - \gamma^u_t(C)) =0 \, \, \,  \text{ for } t=0,1,2 \\
&p(C) \ge 0 \, \, \,  \forall C \in \mathcal C_4.
\end{align*}

The respective dual program is:
\begin{align}
\label{dualMin2}
&\alpha_{\mathrm{min}}(\lam) = \max \,   \Lam_p   \text{ subject to  } \\
\nonumber
& \Lam_p  + \sum_{t=0}^2\Lam_t \left [ \gamma^v_t(C) - \gamma^u_t(C)   \right ]  \le \alpha_{C,v}(\lam) \,\,\,\, \forall C \in \mathcal C_4.
\end{align}

Our goal is to show that $\alpha_{\mathrm{min}}(\lam) = \alpha_{P_{5,2}}(\lam)$ for all $\lam \in (0,1]$. 

If we take $\Lambda_p = \alpha_{P_{5,2}} $, then we can define the slack function of the configuration $C\in\mathcal C_4$ as a function of the dual variables $\Lam_0, \Lam_1, \Lam_2$:
\begin{align}
\label{eq:g4slack}
\text{SLACK}_{\mathrm{min}}( \lam,\Lam_0, \Lam_1, \Lam_2,C) &= \alpha_{C,v}(\lam) - \alpha_{P_{5,2}} - \sum_{t=0}^2 \Lam_t \left [ \gamma^v_t(C) - \gamma^u_t(C)   \right ].
\end{align}
Our goal is now to find values for the dual variables $\Lam_0^*, \Lam_1^*, \Lam_2^*$ so that, for all configurations $C\in \mathcal C_4$, 
\begin{align*}
\text{SLACK}_{\mathrm{min}}(\lam, \Lam_0^*, \Lam_1^*, \Lam_2^*,C) \ge 0
\end{align*}
In this case, we need to divide the interval $(0,1]$ into four intervals, and select different functions for the dual variables depending on which interval $\lam$ is in. We will not be able to show that the slack functions are positive polynomials in $\lam$: instead we will perform four different substitutions, writing $\lam$ as function of an auxiliary variable $t$ and show that the slack functions are the ratio of positive polynomials in $t$.

Note that for any $a\leq b$, the function $\lambda(t)= \frac{b(a/b +t)}{1+t}$ maps $[0,\infty)$ to $[a,b)$. Since the function $\text{SLACK}_{\mathrm{min}}(\lam, \Lam^*_0(\lam),\Lam^*_1(\lam),\Lam^*_2(\lam),C)$ is a continuous function of $\lam$, if we can show 
\[ \text{SLACK}_{\mathrm{min}}(\lam(t), \Lam^*_0(\lam(t)),\Lam^*_1(\lam(t)),\Lam^*_2(\lam(t)),C) \ge 0 \]
 for all $t \ge 0$, then we have 
 \[ \text{SLACK}_{\mathrm{min}}(\lam, \Lam^*_0(\lam),\Lam^*_1(\lam),\Lam^*_2(\lam),C) \ge 0 \]
  for all $\lam \in [a,b]$. 
 
In each of the four claims below, we will assign values to the dual variables, $\Lam_0^*, \Lam_1^*, \Lam_2^*$.  We arrived at these values by solving the dual constraints to hold with equality for a given subset of the configurations; we determined these subsets by solving instances of the dual program for fixed values of $\lam$ and observing which constraints were tight.  
  
\begin{claim}\label{cla:31}
Let $\lam_1(t) = \frac{3}{16} \frac{t}{1+t}$.  Let
\begin{align*}
\Lam_0^*(\lam) & = 0 \\
\Lam_1^* (\lam) &= \frac{3 \left(4 \lambda ^8+21 \lambda ^7+57 \lambda ^6+67 \lambda ^5+38 \lambda ^4+10 \lambda ^3+\lambda ^2\right)}{\left(4 \lambda ^4+10 \lambda ^3+11 \lambda ^2+7 \lambda +1\right) \left(5 \lambda ^4+30 \lambda ^3+30 \lambda ^2+10 \lambda +1\right)} \\
\Lam_2^*(\lam) &= \frac{3 \left(4 \lambda ^8+31 \lambda ^7+68 \lambda ^6+64 \lambda ^5+33 \lambda ^4+9 \lambda ^3+\lambda ^2\right)}{\left(4 \lambda ^4+10 \lambda ^3+11 \lambda ^2+7 \lambda +1\right) \left(5 \lambda ^4+30 \lambda ^3+30 \lambda ^2+10 \lambda +1\right)} .
\end{align*}
Then, for every $C\in \mathcal C_4$: 
\begin{align*}
\text{SLACK}_{\mathrm{min}}( \lam_1(t), 0, \Lam_1^*(\lam_1(t)), \Lam_2^*(\lam_1(t)),C) 
\end{align*}
is either identically $0$ or  the ratio of two polynomials in $t$ with all positive coefficients.

This implies $\alpha_{\mathrm{min}}(\lam) = \alpha_{P_{5,2}}(\lam)$  for $\lam \in (0, 3/16]$.
\end{claim}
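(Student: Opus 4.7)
The plan is to establish Claim~\ref{cla:31} by verifying dual feasibility for the linear program \eqref{dualMin2} with $\Lam_p = \alpha_{P_{5,2}}(\lam)$ and the prescribed values $\Lam_0^*(\lam), \Lam_1^*(\lam), \Lam_2^*(\lam)$. Once dual feasibility is established on $\lam\in(0,3/16]$, weak LP duality combined with the fact that the Petersen graph gives a feasible primal solution attaining $\alpha_{P_{5,2}}(\lam)$ forces $\alpha_{\mathrm{min}}(\lam)=\alpha_{P_{5,2}}(\lam)$ throughout the interval.

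The first step is to record how the proposed dual values arise. Since $P_{5,2}$ must achieve equality, the dual constraint for $C^1_{29}(0,\dots,0)$ must be tight; solving the LP numerically at sample points of $(0,3/16]$ identifies two additional tight configurations. Setting $\Lam_0^*(\lam)=0$ and enforcing equality at these three constraints yields a linear system whose solution produces the rational functions $\Lam_1^*(\lam),\Lam_2^*(\lam)$ stated in the claim. This part is a derivation of the candidate, not a step of the proof itself; its correctness is established in what follows.

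The main step is to show, for each of the $207$ configurations $C\in\mathcal{C}_4$, that $\text{SLACK}_{\mathrm{min}}(\lam(t),0,\Lam_1^*(\lam(t)),\Lam_2^*(\lam(t)),C)\ge 0$ for $t\ge 0$, where $\lam(t)=\lam_1(t)=\tfrac{3t}{16(1+t)}$. The idea is that this Möbius substitution bijects $t\in[0,\infty)$ with $\lam\in[0,3/16)$, so it suffices to prove positivity on $[0,\infty)$ in the variable $t$; continuity then extends nonnegativity to $\lam=3/16$. For each $C$, one computes $\alpha_{C,v}(\lam), \gamma^v_t(C),\gamma^u_t(C)$ (via the analogues of Lemma~\ref{lem:g6functions} adapted to the configurations of $\mathcal{C}_4$ described in Appendix~\ref{app:C_4}), substitutes $\lam=\lam_1(t)$ into the slack expression \eqref{eq:g4slack}, and clears denominators. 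The resulting expression is either identically zero (for configurations in the support of $P_{5,2}$, i.e.\ $C^1_{29}(0,\dots,0)$ and any other whose tight constraint was used in defining the dual) or a ratio of two polynomials in $t$. For dual feasibility one must check that both polynomials have all coefficients nonnegative, which is a finite, purely arithmetic verification.

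The principal obstacle is not conceptual but computational: the number of configurations is large, the rational functions are unwieldy, and the polynomials in $t$ after clearing denominators have very high degree. This rules out a hand check, and one must rely on a computer algebra system to expand each slack, perform the substitution, collect coefficients of $t$, and certify that all coefficients are nonnegative. This is precisely the computation carried out in the ancillary files accompanying the paper. The choice of the specific Möbius substitution $\lam_1(t)=\tfrac{3t}{16(1+t)}$ is essential: it is tailored to the interval $(0,3/16]$ on which this particular dual solution is feasible, and a different substitution (leading to the subsequent three claims covering the remaining sub-intervals of $(0,1]$) is required once $\lam$ exceeds $3/16$, reflecting the fact that different sets of dual constraints become tight on different sub-intervals.
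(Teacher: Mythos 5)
Your proposal follows the same route as the paper: set up the LP duality argument, explain the Möbius substitution $\lam_1(t)=\tfrac{3t}{16(1+t)}$ mapping $[0,\infty)$ onto $[0,3/16)$ with continuity handling the endpoint, derive the candidate dual variables by forcing equality at the tight constraints, and defer the positivity check of the resulting rational functions in $t$ to a computer-algebra verification, which is exactly what the paper does in its ancillary files. The argument is correct and matches the paper's proof in both structure and substance.
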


\begin{claim}\label{cla:32}
Let $\lam_2(t) = \frac{11}{20} \cdot \frac{\frac{3}{16} \frac{20}{11}   +t}{ 1+t}$.  Let
\begin{align*}
\Lam_0^*(t) & = 0 \\
\Lam_1^*(\lam) &= \frac{4 \lambda ^7-13 \lambda ^6-64 \lambda ^5-75 \lambda ^4-36 \lambda ^3-6 \lambda ^2}{2 \left(5 \lambda ^7+40 \lambda ^6+100 \lambda ^5+135 \lambda ^4+111 \lambda ^3+52 \lambda ^2+12 \lambda +1\right)}\\
\Lam_2^*(\lam) &= \frac{-6 \lambda ^6-45 \lambda ^5-49 \lambda ^4-21 \lambda ^3-3 \lambda ^2}{2 \left(\lambda ^2+\lambda +1\right) \left(5 \lambda ^4+30 \lambda ^3+30 \lambda ^2+10 \lambda +1\right)}.
\end{align*}
Then, for every $C\in \mathcal C_4$: 
\begin{align*}
\text{SLACK}_{\mathrm{min}}( \lam_2(t), 0, \Lam_1^* (\lam_2(t)), \Lam_2^*(\lam_2(t)),C) 
\end{align*}
is either identically $0$ or  the ratio of two polynomials in $t$ with all positive coefficients.

This implies $\alpha_{\mathrm{min}}(\lam) = \alpha_{P_{5,2}}(\lam)$ for $\lam \in [3/16, 11/20]$.
\end{claim}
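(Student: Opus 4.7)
The proof follows the template of Claim~\ref{cla:31}: the plan is to verify that the proposed dual assignment is feasible for the dual program~\eqref{dualMin2} with $\Lambda_p = \alpha_{P_{5,2}}(\lambda)$, and then to invoke weak LP duality. Feasibility of the primal at value $\alpha_{P_{5,2}}(\lambda)$ is automatic, since the Petersen graph induces the distribution concentrated on $C^1_{29}(0,0,0,0,0,0)$. So the content is to verify that
\begin{equation*}
\text{SLACK}_{\mathrm{min}}\bigl(\lambda_2(t),\,0,\,\Lambda_1^*(\lambda_2(t)),\,\Lambda_2^*(\lambda_2(t)),\,C\bigr) \ge 0
\end{equation*}
for every $t \ge 0$ and every $C \in \mathcal C_4$, after which continuity hands us the endpoint $\lambda = 11/20$.

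First I would explain how the proposed $(\Lambda_1^*, \Lambda_2^*)$ are obtained. Solving the dual LP numerically for a grid of $\lambda$-values in $[3/16,\,11/20]$ one observes that a fixed small set of dual constraints (including the one indexed by $C^1_{29}(0,0,0,0,0,0)$) is tight throughout, and that the optimal value of $\Lambda_0$ is identically zero on this subinterval. Requiring equality in a basis-sized subset of those constraints and solving the resulting linear system in $\Lambda_1,\Lambda_2$ as a function of $\lambda$ gives exactly the rational functions stated. Note that the tight set here differs from the one used in Claim~\ref{cla:31}, which is why the closed-form expressions for $\Lambda_1^*$ and $\Lambda_2^*$ change at $\lambda = 3/16$.

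Next, for each of the $207$ configurations $C \in \mathcal C_4$, I would substitute these expressions into the slack function~\eqref{eq:g4slack} and clear the common positive denominator, leaving a rational function of $\lambda$ whose sign must be controlled on $[3/16, 11/20]$. The Möbius transformation $\lambda_2$ is an order-preserving bijection from $[0,\infty)$ onto $[3/16, 11/20)$, so substituting $\lambda = \lambda_2(t)$ and again clearing the resulting positive denominator reduces the task to checking that each remaining polynomial in $t$ is either identically zero or has all nonnegative coefficients; positivity on $t \ge 0$ then follows term by term. By construction, $C^1_{29}(0,0,0,0,0,0)$ produces identically zero slack, and the expected outcome for every other configuration is a polynomial in $t$ with nonnegative coefficients. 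This verification is mechanical and delegated to a computer algebra routine, analogous to the procedure documented in the ancillary files for Section~\ref{sec:g5}.

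The true difficulty is not the algebra but the necessity of interval-splitting: no single rational choice of dual variables can certify optimality over all of $(0,1]$, because the set of tight dual constraints changes with $\lambda$. This is why four separate claims are needed, each paired with a tailored Möbius substitution. Over $[3/16,\,11/20]$ in particular, the slack functions are not polynomials in $\lambda$ with nonnegative coefficients, so a direct sign test in the variable $\lambda$ must fail; after the substitution $\lambda = \lambda_2(t)$, however, each slack function becomes a rational function whose nontrivial factor is a polynomial in $t$ with nonnegative coefficients. The genuine obstacle in Claim~\ref{cla:32} is thus identifying — before the dual variables can even be written down — which small subset of constraints forms the correct tight set on this middle subinterval, a step that relies on numerical exploration of the dual LP and on recognizing the appropriate rescaling parameters $a=3/16$ and $b=11/20$ at which the extremal dual basis changes.
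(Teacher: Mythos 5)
Your proposal is correct and follows essentially the same route as the paper: write down the dual of the relaxed LP with $\Lambda_p=\alpha_{P_{5,2}}(\lambda)$, plug in the given dual multipliers, reparametrize $\lambda=\lambda_2(t)$ so that $[3/16,11/20)$ corresponds to $t\in[0,\infty)$, and check by computer algebra that each resulting slack is either identically $0$ or a ratio of polynomials in $t$ with all positive coefficients (with continuity supplying the endpoint). One small inaccuracy in your prose: on this middle interval it is not only the Petersen configuration that produces identically vanishing slack; the paper's Appendix records four zero-slack configurations here, namely $C^1_{3}(1,1,1,0,1,0)$, $C^1_{28}(1,1,1,1,1,0)$, $C^1_{29}(1,1,1,1,1,1)$, and $C^2_{7}(1,0,1,1,1)$. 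This does not affect the validity of the argument — the claim explicitly allows identically-zero slack — but it is the reason the authors must do additional work (via complementary slackness, in Appendix~C) to show these extra zero-slack configurations cannot occur in an extremal graph, so that uniqueness still pins down unions of $P_{5,2}$.
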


\begin{claim}\label{cla:33}
Let $\lam_3(t) = \sqrt{\frac{3}{5}} \cdot \frac{\frac{11}{20} \sqrt{\frac{5}{3}}  + t}{1+t}  $.  Let
\begin{align*}
\Lam_0^* & = 0 \\
\Lam_1^*(\lam) &= \frac{5 \lambda ^7-41 \lambda ^6-125 \lambda ^5-111 \lambda ^4-42 \lambda ^3-6 \lambda ^2}{2 \left(10 \lambda ^7+75 \lambda ^6+165 \lambda ^5+205 \lambda ^4+152 \lambda ^3+63 \lambda ^2+13 \lambda +1\right)} \\
\Lam_2^*(\lam) &=  \frac{-12 \lambda ^7-96 \lambda ^6-140 \lambda ^5-83 \lambda ^4-24 \lambda ^3-3 \lambda ^2}{2 \left(2 \lambda ^3+3 \lambda ^2+3 \lambda +1\right) \left(5 \lambda ^4+30 \lambda ^3+30 \lambda ^2+10 \lambda +1\right)} .
\end{align*}
Then, for every $C\in \mathcal C_4$: 
\begin{align*}
\text{SLACK}_{\mathrm{min}}( \lam_3(t), 0, \Lam_1^*(\lam_3(t)), \Lam_2^*(\lam_3(t)),C) 
\end{align*}
is either identically $0$ or  the ratio of two polynomials in $t$ with all positive coefficients.

This implies $\alpha_{\mathrm{min}}(\lam) = \alpha_{P_{5,2}}(\lam)$ for $\lam \in [11/20, \sqrt{3/5}]$.
\end{claim}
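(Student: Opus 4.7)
The plan is to follow the template established by Claims \ref{cla:31} and \ref{cla:32}: verify that the proposed quadruple $(\Lam_p, \Lam_0^*, \Lam_1^*, \Lam_2^*) = (\alpha_{P_{5,2}}(\lam), 0, \Lam_1^*(\lam), \Lam_2^*(\lam))$ is a feasible solution of the dual program \eqref{dualMin2} on $\lam \in [11/20, \sqrt{3/5}\,]$, so that weak duality gives $\alpha_{\mathrm{min}}(\lam) \ge \alpha_{P_{5,2}}(\lam)$ on this interval. The matching primal-feasible point supplied by the distribution induced by $P_{5,2}$ then forces equality.

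First I would check that the Möbius map $\lam_3(t) = \sqrt{3/5}\,(\tfrac{11}{20}\sqrt{5/3} + t)/(1+t)$ carries $[0,\infty)$ bijectively onto $[11/20, \sqrt{3/5})$: one reads off $\lam_3(0) = \tfrac{11}{20}\sqrt{3/5}\sqrt{5/3} = 11/20$ and $\lam_3(t) \to \sqrt{3/5}$ as $t\to\infty$, with the monotonicity inherited from the standard fact that $(a/b+t)/(1+t)$ is monotone in $t$ for $a<b$. Since $\text{SLACK}_{\mathrm{min}}$ is continuous in $\lam$, nonnegativity on the open interval promotes to the closed interval $[11/20,\sqrt{3/5}\,]$.

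Second, I would compute, for each of the $207$ configurations $C \in \mathcal C_4$ catalogued in Appendix~\ref{app:C_4}, the scaled slack
\[
\widetilde F(C,t) \;:=\; D(\lam_3(t))\cdot (1+t)^{N_C}\cdot \text{SLACK}_{\mathrm{min}}\bigl(\lam_3(t),\,0,\,\Lam_1^*(\lam_3(t)),\,\Lam_2^*(\lam_3(t)),\,C\bigr),
\]
where $D(\lam)$ is a common clearing factor built from $Z(C)$, $P_{P_{5,2}}(\lam)$, and the denominators of $\Lam_1^*(\lam)$ and $\Lam_2^*(\lam)$, and $N_C$ is chosen large enough to absorb the $(1+t)$ denominators introduced by the substitution. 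By construction $D(\lam)>0$ and $(1+t)^{N_C}>0$ for $t\ge 0$, so it suffices to show that $\widetilde F(C,t)$ is either identically zero (in which case complementary slackness holds automatically for $C$) or a polynomial in $t$ with non-negative coefficients. The formulae for $\alpha_{C,v}(\lam)$, $\gamma^v_t(C)$ and $\gamma^u_t(C)$ from Section~\ref{sec:computFunc} make each $\widetilde F(C,t)$ a rational function whose numerator, after this substitution and clearing, is a polynomial in $t$ of modest degree.

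Finally, the actual verification is a routine but voluminous computer-algebra check. As in the proof of Claim~\ref{claim:g5ub}, I would delegate the expansion and coefficient-collection of all $207$ polynomials $\widetilde F(C,t)$ to a computer algebra system and include the code and output as an ancillary file. The only substantive obstacle is scale: the configuration set $\mathcal C_4$ is much larger than $\mathcal C_6$, and the dual functions $\Lam_1^*(\lam), \Lam_2^*(\lam)$ are rational of higher degree than in Claims~\ref{cla:31}, \ref{cla:32}, so both the common denominator $D(\lam)$ and the exponent $N_C$ are larger and the coefficient lists are correspondingly longer; there is no conceptual difficulty, merely bookkeeping. Once every $\widetilde F(C,t)$ is confirmed to be either zero or to have non-negative coefficients, LP duality yields $\alpha_{\mathrm{min}}(\lam) = \alpha_{P_{5,2}}(\lam)$ for all $\lam \in [11/20,\sqrt{3/5}\,]$, as claimed.
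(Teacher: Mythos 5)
Your proposal matches the paper's approach: substitute the Möbius map $\lam_3(t)$, clear denominators by a positive factor, and delegate the verification that each scaled slack is identically zero or has nonnegative coefficients to a computer-algebra system, then invoke LP weak duality together with the primal-feasible distribution induced by $P_{5,2}$. The paper does exactly this (with the slacks left as ratios of positive-coefficient polynomials rather than fully cleared, an immaterial cosmetic difference) and refers to Mathematica code in the ancillary files for the verification.
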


\begin{claim}\label{cla:34}
Let $\lam_4(t) =  \frac{ t+ \sqrt{3/5}}{t+1}$.  Let
\begin{align*}
\Lam_0^*(\lam) & = \frac{12 \lambda ^7+66 \lambda ^6+86 \lambda ^5-88 \lambda ^4-196 \lambda ^3-117 \lambda ^2-30 \lambda -3}{6 \lambda  (\lambda +1)^2 \left(5 \lambda ^4+30 \lambda ^3+30 \lambda ^2+10 \lambda +1\right)} \\
\Lam_1^*(\lam) &= \frac{-18 \lambda ^6-3 \lambda ^5-54 \lambda ^4-118 \lambda ^3-87 \lambda ^2-27 \lambda -3}{6 \lambda  (\lambda +1) \left(5 \lambda ^4+30 \lambda ^3+30 \lambda ^2+10 \lambda +1\right)}\\
\Lam_2^*(\lam) &= \frac{-6 \lambda ^6-18 \lambda ^5-100 \lambda ^4-160 \lambda ^3-105 \lambda ^2-30 \lambda -3}{6 \lambda  (\lambda +1) \left(5 \lambda ^4+30 \lambda ^3+30 \lambda ^2+10 \lambda +1\right)}.
\end{align*}
Then, for every $C\in \mathcal C_4$: 
\begin{align*}
\text{SLACK}_{\mathrm{min}}( \lam_4(t), \Lam_0^*(\lam_4(t)), \Lam_1^*(\lam_4(t)), \Lam_2^*(\lam_4(t)),C) 
\end{align*}
is either identically $0$ or  the ratio of two polynomials in $t$ with all positive coefficients.

This implies $\alpha_{\mathrm{min}}(\lam) = \alpha_{P_{5,2}}(\lam)$ for $\lam \in [\sqrt{3/5}, 1]$.
\end{claim}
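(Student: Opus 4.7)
The plan is to verify feasibility of the proposed dual assignment and invoke weak duality for program \eqref{dualMin2}. If the tuple $(\alpha_{P_{5,2}}(\lam), \Lam_0^*(\lam), \Lam_1^*(\lam), \Lam_2^*(\lam))$ is dual-feasible for every $\lam \in [\sqrt{3/5}, 1]$, then $\alpha_{\mathrm{min}}(\lam) \ge \alpha_{P_{5,2}}(\lam)$ on this interval. The reverse inequality is immediate, since the configuration distribution induced by the Petersen graph is supported on $C^1_{29}(0,0,0,0,0,0)$ and is primal-feasible, attaining $\alpha_{P_{5,2}}(\lam)$. So the entire claim reduces to showing
\[ \text{SLACK}_{\mathrm{min}}(\lam, \Lam_0^*(\lam), \Lam_1^*(\lam), \Lam_2^*(\lam), C) \ge 0 \]
for every $C \in \mathcal C_4$ and every $\lam \in [\sqrt{3/5}, 1]$.

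The substitution $\lam = \lam_4(t) = (t + \sqrt{3/5})/(t+1)$ is a continuous bijection from $[0,\infty)$ onto $[\sqrt{3/5}, 1)$, and continuity of the slack in $\lam$ extends any bound on the open interval to the closed one. So I would prove, for each $C \in \mathcal C_4$, that $\text{SLACK}_{\mathrm{min}}(\lam_4(t), \Lam_0^*(\lam_4(t)), \Lam_1^*(\lam_4(t)), \Lam_2^*(\lam_4(t)), C)$ is either identically zero or a ratio of two polynomials in $t$ with nonnegative coefficients. For each of the $207$ configurations this is a purely mechanical computation: (i) assemble the rational function in $\lam$ from the explicit formulae for $\alpha_{C,v}(\lam), \gamma_t^v(C), \gamma_t^u(C)$ following the pattern of Lemma~\ref{lem:g6functions}; (ii) multiply by the common positive factor $6\lam(\lam+1) P_{P_{5,2}}(\lam) Z(C)$, which clears all denominators in the dual variables and in $\alpha_{C,v}(\lam)$; (iii) substitute $\lam = \lam_4(t)$ and multiply through by the appropriate power of $(1+t)$; (iv) expand, collect coefficients as a polynomial in $t$, and verify nonnegativity (or identical vanishing) of each coefficient. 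I would delegate this bookkeeping to a computer algebra script and ship it as an ancillary file, mirroring the strategy used for the earlier claims.

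The main obstacle is not conceptual but algebraic, with one genuine subtlety: because $\lam_4(t)$ involves the irrational $\sqrt{3/5}$, after substitution the slack a priori lies in $\mathbb{Q}[\sqrt{3/5}](t)$, so the coefficient of each power of $t$ has the form $a + b\sqrt{3/5}$. A valid proof certificate must confirm that every $b$ vanishes (so the expression is a genuine rational function of $t$ with rational coefficients) \emph{and} that the resulting rational coefficients are nonnegative. The $\sqrt{3/5}$ threshold arises because the set of tight dual constraints changes at $\lam = \sqrt{3/5}$, and the particular rational functions $\Lam_i^*(\lam)$ in this subinterval are engineered precisely so that the irrational parts cancel after substitution. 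A sensible sanity check before running the full pipeline is to confirm that $\text{SLACK}_{\mathrm{min}}$ vanishes identically on the configurations one expects to be tight by complementary slackness at the Petersen graph (in particular $C^1_{29}(0,0,0,0,0,0)$, as well as on any additional configurations forced to be tight at the interval endpoint $\lam = \sqrt{3/5}$ matching the tight set from Claim~\ref{cla:33}); once this is verified, applying the same automated coefficient-checking procedure as in the preceding three claims completes the proof.
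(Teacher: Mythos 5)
Your proposal matches the paper's approach exactly: set up the LP relaxation, exhibit the candidate dual solution, substitute $\lam = \lam_4(t)$, and verify nonnegativity of slack-function coefficients by computer algebra (the paper ships this as a Mathematica ancillary file). One small correction: you need not (and in general cannot) force the irrational parts of the coefficients to vanish after substituting $\sqrt{3/5}$; the claim only asserts that the slack is a ratio of polynomials in $t$ with positive \emph{real} coefficients, so coefficients of the form $a + b\sqrt{3/5}$ are perfectly admissible provided each such expression is nonnegative, which is what the certificate must actually check.
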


The proof of these claims again proceeds by computing 
\[ \text{SLACK}_{\mathrm{min}}( \lam_i(t), \Lam_0^*(\lam_i(t)), \Lam_1^*(\lam_i(t)), \Lam_2^*(\lam_i(t)),C) \]
 in each of the above intervals of $\lam$ and for each $C \in \mathcal C_4$.  These computations are done using the Mathematica software program, and the code as well as the output of the program can be found in the ancillary files. The necessary functions $\alpha_{C,v}(\lam), \gamma_t^v(C), \gamma_t^u(C)$, are written below in Section~\ref{sec:computFunc}.

We defer the proof of uniqueness to Appendix~\ref{sec:Unqiue52}. The proof proceeds via complementary slackness and showing that the only graph whose distribution has support contained in the set of configurations whose slack function is identically $0$ is a union of copies of $P_{5,2}$ (in each of the four intervals).   

\subsection{Computing the functions of configurations}
\label{sec:computFunc}

Let us describe here how we compute the functions $\alpha_{C,v}(\lam)$, $\gamma_t^v(C)$, and $\gamma_t^u(C)$.  We aim to write the functions in a naive way that can easily be implemented with a FOR loop in a computer program.

Recall that a configuration is given by $C=(W, E_{12}, E_{22})$.  Let $U = \{ u_1, u_2, u_3 \} $ represent the neighbors of $v$.  

Then for any $S_2 \subseteq W$, we can compute the contribution to the partition function  of the configuration $C$ from an independent set that contains $v$ and such that $I \cap W = S_2$ (where we drop $\lam$ and $C$ from the functional notation):
\begin{align}
\label{eq:zplusS}
Z_{+}(S_2) &= \lam ^{1+ |S_2|} \cdot \prod_{w_1, w_2 \in S_2} \mathbf 1_{ (w_1 ,w_2) \notin E_{22}} .
\end{align}
Similarly for $S_1 \subseteq U, S_2 \subseteq W$, we can write the contribution to the partition function from $I$ that does not contain $v$ and such that $I \cap U = S_1$ and $I \cap W=S_2$.  
\begin{align}
\label{eq:zminS1S2}
Z_{-}(S_1,S_2) &= \lam ^{|S_1|+ |S_2|}\cdot \prod_{u \in S_1, w \in S_2} \mathbf 1_{ (u ,w) \notin E_{12}} \cdot \prod_{w_1, w_2 \in S_2} \mathbf 1_{ (w_1 ,w_2) \notin E_{22}} .
\end{align}
Then we can sum over all possible sets to compute the partition function:
\begin{align}
\label{eq:Zcpart}
Z(C) &= \sum_{S_2 \subseteq W} Z_{+}(S_2) + \sum_{S_1 \subseteq U, S_2 \subseteq W} Z_{-}(S_1,S_2).
\end{align}
The probability $v $ is in $I$ is then:
\begin{align}
\label{eqAlpv}
\alpha_{C,v}(\lam) &= \frac{ \sum_{S_2 \subseteq W} Z_{+}(S_2)  }{ Z(C)} .
\end{align}
We  compute the $\gamma^v_t$ functions:
\begin{align}
\label{eqGamv2}
\gamma^v_t(C)&=(1 + \mathbf 1_{t=0} \lam)  \frac{ \sum_{S_1 \subseteq U, S_2 \subseteq W} \mathbf 1_{|S_1|=t} \cdot  Z_{-}(S_1,S_2)   }{ Z(C)} . 
\end{align}
We next compute the $\gamma^u_t$ functions:
\begin{align}
\label{eqGamu}
\gamma^u_0(C)&=  \frac{1}{3} \sum_{u\in U}  \frac{1}{Z_C}  \sum_{S_1 \subseteq U, S_2 \subseteq W} \mathbf 1_{|S_2 \cap N(u)|=0} \cdot  Z_{-}(S_1,S_2) ,
\end{align}
and for $t \ge 1$,
{\small
\begin{align}
\label{eqGamu2}
\gamma^u_t(C)&=  \frac{1}{3} \sum_{u\in U}  \frac{1}{Z(C)} \left[\sum_{S_2 \subseteq W} \mathbf 1_{|S_2 \cap N(u)|=t-1} \cdot Z_{+}(S_2)   +  \sum_{S_1 \subseteq U, S_2 \subseteq W} \mathbf 1_{|S_2 \cap N(u)|=t} \cdot  Z_{-}(S_1,S_2) \right ] .
\end{align}
}

\section{Extensions}
\label{sec:extensions}

\subsection{Conjectured extremal graphs}

Conjecture~\ref{conj:Moore} states that all Moore graphs are extremal for the quantity $\frac{1}{|V(G)|} \log | \mathcal I (G)|$ for a given regularity and minimum girth.  Here we give several specific instances of this conjecture that may be amenable to these methods, and give some strengthened conjectures on the level of the occupancy fraction. 

Our first conjecture extends Theorem~\ref{thm:d3g4} and is the occupancy fraction version of the conjecture proposed in~\cite{cutler2016minimum} (though neither implies the other).
\begin{conj}\label{conj:d3LB}
Let $\lam_3^*   \approx 1.84593$ be the largest root of the equation $ 21 \lam^4 - 50 \lam^2 - 36 \lam  =7$.   
Then for every $3$-regular triangle-free graph $G$,
\begin{enumerate}
\item If $\lam \in (0, \lam_3^*]$,
\[ \alpha_G(\lam) \ge \alpha_{P_{5,2}}(\lam);\]
\item If $\lam \ge \lam_3^*$,
\[ \alpha_G(\lam) \ge \alpha_{P_{7,2}}(\lam).\] 
\end{enumerate}
\end{conj}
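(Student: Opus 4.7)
The plan is to repeat essentially verbatim the linear-programming framework of Section~\ref{sec:g4}: same configuration set $\mathcal C_4$, same three consistency constraints, same primal and dual programs, and the same slack function $\text{SLACK}_{\mathrm{min}}(\lam,\Lam_0,\Lam_1,\Lam_2,C)$ defined in \eqref{eq:g4slack}. Only the objective $\Lam_p$ and the explicit dual variables, viewed as functions of $\lam$, will change between the two parts. By weak LP duality, to prove part (1) it suffices to exhibit, for each $\lam\in(0,\lam_3^*]$, dual variables $(\Lam_0^*,\Lam_1^*,\Lam_2^*)$ with $\Lam_p=\alpha_{P_{5,2}}(\lam)$ for which every slack is non-negative; for part (2) the same with $\Lam_p=\alpha_{P_{7,2}}(\lam)$.

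For part (1), the four Claims~\ref{cla:31}--\ref{cla:34} already cover $(0,1]$. I would extend them by constructing additional sub-intervals $1=a_1<b_1=a_2<\cdots<b_k=\lam_3^*$ with explicit rational dual functions on each piece. Candidate duals would be produced in the same semi-automatic way the authors use: numerically solve the dual LP at several sample values of $\lam$ inside $(1,\lam_3^*]$, read off which subset $\mathcal T(\lam)\subseteq\mathcal C_4$ of dual constraints is tight (always including the Petersen configuration $C_{29}^1(0,0,0,0,0,0)$), and then solve the three equations $\text{SLACK}_{\mathrm{min}}(\cdot,C)=0$ for $C\in\mathcal T(\lam)$ to obtain $\Lam_0^*,\Lam_1^*,\Lam_2^*$ as rational functions of $\lam$. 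Each break-point corresponds to a configuration entering or leaving $\mathcal T(\lam)$. Nonnegativity of each slack on the corresponding closed sub-interval is then verified, as in the existing claims, by the substitution $\lam(t)=b\cdot\tfrac{a/b+t}{1+t}$ which parametrizes $[a,b)$ by $t\in[0,\infty)$, reducing the problem to showing that numerator and denominator of the slack are polynomials in $t$ with non-negative coefficients; this is a direct Mathematica verification.

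For part (2), the plan is identical except that $\Lam_p=\alpha_{P_{7,2}}(\lam)$ and the tight-configuration set is drawn from the support of $P_{7,2}$. Because $P_{7,2}$ is vertex-transitive of girth $5$ and diameter $3$, fix a vertex $v^*$, enumerate its possible depth-$2$ local views $C\in\mathcal C_4$ as the independent set $I$ ranges over $\mathcal I(P_{7,2})$, and record the resulting collection $\mathcal S_{7,2}$. Unlike $P_{5,2}$, whose single configuration $C_{29}^1(0,0,0,0,0,0)$ has all second neighbors externally uncovered, $\mathcal S_{7,2}$ contains several configurations with nontrivial externally-covered vertices. I would then construct a piecewise dual certificate on $[\lam_3^*,\infty)$ forcing $\text{SLACK}_{\mathrm{min}}(\cdot,C)=0$ for $C\in\mathcal S_{7,2}$ on each piece, using the substitution $\lam(t)=\lam_3^*+t$ on the unbounded tail and $\lam(t)=b\tfrac{a/b+t}{1+t}$ on any bounded piece. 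Uniqueness in the interior of each regime would follow, as in Appendix~\ref{sec:Unqiue52}, from complementary slackness: a hard-core distribution achieving equality must be supported inside the Petersen (respectively Generalized Petersen) support, forcing $G$ to be a disjoint union of copies of that graph.

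The main obstacle is essentially organizational rather than conceptual. First, the number of piecewise dual regimes likely grows as $\lam$ increases, because more configurations become competitive, so tracking the correct tight set $\mathcal T(\lam)$ over the full range and producing clean closed-form dual functions requires careful case analysis. Second, the crossover condition at $\lam=\lam_3^*$ must be checked algebraically: the Petersen-tight and $P_{7,2}$-tight systems should simultaneously have feasible slack exactly when $\alpha_{P_{5,2}}(\lam)=\alpha_{P_{7,2}}(\lam)$, and the defining equation $21\lam^4-50\lam^2-36\lam=7$ of $\lam_3^*$ should emerge as the resultant of those two occupancy-fraction rational functions --- a useful consistency check. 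Finally, the polynomial positivity verifications at the top of each regime become high-degree (seven to nine variables times the size of $\mathcal C_4$) and must be handled by computer algebra, so the real work is producing a printable certificate of nonnegative coefficients for all $207$ configurations across every sub-interval.
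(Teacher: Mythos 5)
The statement you are attempting to prove is Conjecture~\ref{conj:d3LB}, which the paper explicitly leaves open; there is no ``paper's own proof'' to compare against. The authors prove only the restriction of part (1) to $\lam\in(0,1]$, namely Theorem~\ref{thm:d3g4}, via the linear programming machinery of Section~\ref{sec:g4}, and they present the extension to $(1,\lam_3^*]$ and part (2) as unproven. Your proposal is a reasonable description of how one might try to push the same machinery further, but it is a research plan, not a proof: it does not exhibit any dual certificate on $(1,\lam_3^*]$ nor on $[\lam_3^*,\infty)$, and all the work is deferred to hypothetical future computation.

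The real gap is that you treat the only obstacle as ``organizational rather than conceptual,'' which is not justified. It is not known whether the depth-2 local-view LP relaxation remains tight for $\lam>1$; it is entirely possible that $\alpha_{\mathrm{min}}(\lam)<\alpha_{P_{5,2}}(\lam)$ for some $\lam\in(1,\lam_3^*]$, or that $\alpha_{\mathrm{min}}(\lam)<\alpha_{P_{7,2}}(\lam)$ for $\lam>\lam_3^*$, in which case no choice of dual variables can close the gap and a strictly richer local view (depth 3, or additional consistency constraints) would be required. The authors' decision to phrase this as a conjecture rather than remark that the verification is merely tedious is evidence they did not find the certificate past $\lam=1$. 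In addition, part (2) is substantially harder than you acknowledge: $P_{7,2}$ is not a Moore graph, its configuration support $\mathcal S_{7,2}$ is larger, and the complementary-slackness uniqueness step needs a dedicated argument that the only cubic triangle-free graph whose configuration distribution lives inside $\mathcal S_{7,2}$ is a union of copies of $P_{7,2}$ --- a nontrivial cage-type rigidity fact that you would have to supply. (Minor point: your observation that $\lam_3^*$ ``should emerge as a resultant'' is a useful sanity check, but it is already the definition --- $\lam_3^*$ is just the crossing point of $\alpha_{P_{5,2}}$ and $\alpha_{P_{7,2}}$, as the paper notes.)
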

Conjecture~\ref{conj:d3LB} is known in the limit as $\lam \to \infty$: all triangle-free graphs of maximum degree $3$ have independence ratio at least $5/14$ and this is achieved by $P_{7,2}$~\cite{staton1979some}.

\begin{center}
\begin{figure}[h!]
\caption{}
\begin{tabular}{ c c }
  
\begin{tikzpicture}
    \node[shape=circle,fill=black] (A) at (0,2) {};
   \node[shape=circle,fill=black] (B1) at (0.479,1.942) {};
    \node[shape=circle,fill=black] (B2) at (-0.479,1.942) {};
   \node[shape=circle,fill=black] (C1) at (0.929,1.771) {};
    \node[shape=circle,fill=black] (C2) at (-0.929,1.771) {};  
   \node[shape=circle,fill=black] (D1) at (1.326,1.497) {};
    \node[shape=circle,fill=black] (D2) at (-1.326,1.497) {};  
   \node[shape=circle,fill=black] (E1) at (1.646,1.136) {};
    \node[shape=circle,fill=black] (E2) at (-1.646,1.136) {};  
   \node[shape=circle,fill=black] (F1) at (1.87,0.709) {};
    \node[shape=circle,fill=black] (F2) at (-1.87,0.709) {};  
   \node[shape=circle,fill=black] (G1) at (1.985,0.241) {};
    \node[shape=circle,fill=black] (G2) at (-1.985,0.241) {};  
    
   \node[shape=circle,fill=black] (H1) at (1.985,-0.241) {};
    \node[shape=circle,fill=black] (H2) at (-1.985,-0.241) {};  
   \node[shape=circle,fill=black] (I1) at (1.87,-0.709) {};
    \node[shape=circle,fill=black] (I2) at (-1.87,-0.709) {};    
   \node[shape=circle,fill=black] (J1) at (1.646,-1.136) {};
    \node[shape=circle,fill=black] (J2) at (-1.646,-1.136) {};
   \node[shape=circle,fill=black] (K1) at (1.326,-1.497) {};
    \node[shape=circle,fill=black] (K2) at (-1.326,-1.497) {};
   \node[shape=circle,fill=black] (L1) at (0.929,-1.771) {};
    \node[shape=circle,fill=black] (L2) at (-0.929,-1.771) {};  
   \node[shape=circle,fill=black] (M1) at (0.479,-1.942) {};
    \node[shape=circle,fill=black] (M2) at (-0.479,-1.942) {};
     \node[shape=circle,fill=black] (N) at (0,-2) {};

\draw plot [smooth, tension=2] coordinates { (A) (B1) };
\draw plot [smooth, tension=2] coordinates { (B1) (C1) };
\draw plot [smooth, tension=2] coordinates { (C1) (D1) };
\draw plot [smooth, tension=2] coordinates { (D1) (E1) };
\draw plot [smooth, tension=2] coordinates { (E1) (F1) };
\draw plot [smooth, tension=2] coordinates { (F1) (G1) };
\draw plot [smooth, tension=2] coordinates { (G1) (H1) };
\draw plot [smooth, tension=2] coordinates { (H1) (I1) };
\draw plot [smooth, tension=2] coordinates { (I1) (J1) };
\draw plot [smooth, tension=2] coordinates { (J1) (K1) };
\draw plot [smooth, tension=2] coordinates { (K1) (L1) };
\draw plot [smooth, tension=2] coordinates { (L1) (M1) };
\draw plot [smooth, tension=2] coordinates { (M1) (N) };
\draw plot [smooth, tension=2] coordinates { (N) (M2) };
\draw plot [smooth, tension=2] coordinates { (M2) (L2) };
\draw plot [smooth, tension=2] coordinates { (L2) (K2) };
\draw plot [smooth, tension=2] coordinates { (K2) (J2) };
\draw plot [smooth, tension=2] coordinates { (J2) (I2) };
\draw plot [smooth, tension=2] coordinates { (I2) (H2) };
\draw plot [smooth, tension=2] coordinates { (H2) (G2) };
\draw plot [smooth, tension=2] coordinates { (G2) (F2) };
\draw plot [smooth, tension=2] coordinates { (F2) (E2) };
\draw plot [smooth, tension=2] coordinates { (E2) (D2) };
\draw plot [smooth, tension=2] coordinates { (D2) (C2) };
\draw plot [smooth, tension=2] coordinates { (C2) (B2) };
\draw plot [smooth, tension=2] coordinates { (B2) (A) };

\draw plot [smooth, tension=2] coordinates { (A) (L1) };
\draw plot [smooth, tension=2] coordinates { (A) (H1) };
\draw plot [smooth, tension=2] coordinates { (B1) (G2) };
\draw plot [smooth, tension=2] coordinates { (B1) (K2) };
\draw plot [smooth, tension=2] coordinates { (C1) (J1) };
\draw plot [smooth, tension=2] coordinates { (C1) (N) };
\draw plot [smooth, tension=2] coordinates { (D1) (E2) };
\draw plot [smooth, tension=2] coordinates { (D1) (I2) };
\draw plot [smooth, tension=2] coordinates { (E1) (L1) };
\draw plot [smooth, tension=2] coordinates { (E1) (L2) };
\draw plot [smooth, tension=2] coordinates { (F1) (C2) };
\draw plot [smooth, tension=2] coordinates { (F1) (G2) };
\draw plot [smooth, tension=2] coordinates { (G1) (N) };
\draw plot [smooth, tension=2] coordinates { (G1) (J2) };
\draw plot [smooth, tension=2] coordinates { (B2) (M2) };
\draw plot [smooth, tension=2] coordinates { (B2) (I2) };
\draw plot [smooth, tension=2] coordinates { (C2) (J1) };
\draw plot [smooth, tension=2] coordinates { (D2) (K2) };
\draw plot [smooth, tension=2] coordinates { (D2) (M1) };
\draw plot [smooth, tension=2] coordinates { (E2) (H1) };
\draw plot [smooth, tension=2] coordinates { (F2) (K1) };
\draw plot [smooth, tension=2] coordinates { (F2) (M2) };
\draw plot [smooth, tension=2] coordinates { (H2) (M1) };
\draw plot [smooth, tension=2] coordinates { (H2) (I1) };
\draw plot [smooth, tension=2] coordinates { (J2) (K1) };    
\draw plot [smooth, tension=2] coordinates { (L2) (I1) };    
\end{tikzpicture}
 
 &
 \hspace{2cm}
\begin{tikzpicture}
    \node[shape=circle,fill=black] (A1) at (0,2) {};
    \node[shape=circle,fill=black] (A2) at (0,-2) {};
    
    \node[shape=circle,fill=black] (B1) at (0.416,1.956) {};
    \node[shape=circle,fill=black] (B2) at (-0.416,1.956) {};
    \node[shape=circle,fill=black] (B3) at (0.416,-1.956) {};
    \node[shape=circle,fill=black] (B4) at (-0.416,-1.956) {};
     
    \node[shape=circle,fill=black] (C1) at (0.813,1.827) {};
    \node[shape=circle,fill=black] (C2) at (-0.813,1.827) {};
    \node[shape=circle,fill=black] (C3) at (0.813,-1.827) {};
    \node[shape=circle,fill=black] (C4) at (-0.813,-1.827) {};
    
    \node[shape=circle,fill=black] (D1) at (1.176,1.618) {};
    \node[shape=circle,fill=black] (D2) at (-1.176,1.618) {};
    \node[shape=circle,fill=black] (D3) at (1.176,-1.618) {};
    \node[shape=circle,fill=black] (D4) at (-1.176,-1.618) {};
    
    \node[shape=circle,fill=black] (E1) at (1.486,1.338) {};
    \node[shape=circle,fill=black] (E2) at (-1.486,1.338) {};
    \node[shape=circle,fill=black] (E3) at (1.486,-1.338) {};
    \node[shape=circle,fill=black] (E4) at (-1.486,-1.338) {};

    \node[shape=circle,fill=black] (F1) at (1.732,1) {};
    \node[shape=circle,fill=black] (F2) at (-1.732,1) {};
    \node[shape=circle,fill=black] (F3) at (1.732,-1) {};
    \node[shape=circle,fill=black] (F4) at (-1.732,-1) {};

    \node[shape=circle,fill=black] (G1) at (1.902,0.618) {};
    \node[shape=circle,fill=black] (G2) at (-1.902,0.618) {};
    \node[shape=circle,fill=black] (G3) at (1.902,-0.618) {};
    \node[shape=circle,fill=black] (G4) at (-1.902,-0.618) {};

    \node[shape=circle,fill=black] (H1) at (1.990,0.209) {};
    \node[shape=circle,fill=black] (H2) at (-1.990,0.209) {};
    \node[shape=circle,fill=black] (H3) at (1.990,-0.209) {};
    \node[shape=circle,fill=black] (H4) at (-1.990,-0.209) {};
     
     \draw plot [smooth, tension=2] coordinates { (A1) (B1) };
     \draw plot [smooth, tension=2] coordinates { (A1) (B2) };
     \draw plot [smooth, tension=2] coordinates { (B1) (C1) };
     \draw plot [smooth, tension=2] coordinates { (B2) (C2) };
     \draw plot [smooth, tension=2] coordinates { (C1) (D1) };
     \draw plot [smooth, tension=2] coordinates { (C2) (D2) };     
     \draw plot [smooth, tension=2] coordinates { (D1) (E1) };
     \draw plot [smooth, tension=2] coordinates { (D2) (E2) };     
     \draw plot [smooth, tension=2] coordinates { (E1) (F1) };
     \draw plot [smooth, tension=2] coordinates { (E2) (F2) };
     \draw plot [smooth, tension=2] coordinates { (F1) (G1) };
     \draw plot [smooth, tension=2] coordinates { (F2) (G2) };
     \draw plot [smooth, tension=2] coordinates { (G1) (H1) };
     \draw plot [smooth, tension=2] coordinates { (G2) (H2) };
     \draw plot [smooth, tension=2] coordinates { (H2) (H4) };
     \draw plot [smooth, tension=2] coordinates { (H1) (H3) };
      \draw plot [smooth, tension=2] coordinates { (A2) (B3) };
     \draw plot [smooth, tension=2] coordinates { (A2) (B4) };
     \draw plot [smooth, tension=2] coordinates { (B3) (C3) };
     \draw plot [smooth, tension=2] coordinates { (B4) (C4) };
     \draw plot [smooth, tension=2] coordinates { (C3) (D3) };
     \draw plot [smooth, tension=2] coordinates { (C4) (D4) };     
     \draw plot [smooth, tension=2] coordinates { (D3) (E3) };
     \draw plot [smooth, tension=2] coordinates { (D4) (E4) };     
     \draw plot [smooth, tension=2] coordinates { (E3) (F3) };
     \draw plot [smooth, tension=2] coordinates { (E4) (F4) };
     \draw plot [smooth, tension=2] coordinates { (F3) (G3) };
     \draw plot [smooth, tension=2] coordinates { (F4) (G4) };
     \draw plot [smooth, tension=2] coordinates { (G3) (H3) };
     \draw plot [smooth, tension=2] coordinates { (G4) (H4) };

     \draw plot [smooth, tension=2] coordinates { (A1) (G3) };
     \draw plot [smooth, tension=2] coordinates { (B1) (B3) };
     \draw plot [smooth, tension=2] coordinates { (C1) (E4) };
     \draw plot [smooth, tension=2] coordinates { (D1) (G2) };
     \draw plot [smooth, tension=2] coordinates { (E1) (E3) };
     \draw plot [smooth, tension=2] coordinates { (F1) (C2) };
     \draw plot [smooth, tension=2] coordinates { (G1) (A2) };
     \draw plot [smooth, tension=2] coordinates { (H1) (F4) };
     \draw plot [smooth, tension=2] coordinates { (H3) (F2) };
     \draw plot [smooth, tension=2] coordinates { (F3) (C4) };
     \draw plot [smooth, tension=2] coordinates { (D3) (G4) };
     \draw plot [smooth, tension=2] coordinates { (C3) (E2) };
     \draw plot [smooth, tension=2] coordinates { (D2) (D4) };
     \draw plot [smooth, tension=2] coordinates { (B2) (H4) };
     \draw plot [smooth, tension=2] coordinates { (B4) (H2) };

\end{tikzpicture}
 
\\
\\
(4,6)-cage  $H_{4,6}$& 
 \hspace{2cm}
(3,8)-cage  $H_{3,8}$
  \end{tabular}
  \label{fig:P52_H36} 
  \end{figure}
  \end{center}

We conjecture that the $(3,8)$-cage graph, $H_{3,8}$, also known as the Levi graph or the Tutte-Coxeter graph (see Figure~5), maximizes the occupancy fraction  for all $\lam>0$ over all $3$-regular graphs of girth at least $7$. 
\begin{conj}
For every $3$-regular graph $G$ of girth at least $7$, and for every $\lam >0$,
\[ \alpha_G(\lam) \le \alpha_{H_{3,8}}(\lam).\]
\end{conj}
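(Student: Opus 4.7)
The plan is to extend the local-view linear programming method of Sections~\ref{sec:girth6}--\ref{sec:g5} by one neighborhood layer, using the ball of radius $3$ around a uniformly random vertex $v$. In a cubic graph of girth at least $7$ this ball is isomorphic to the complete cubic tree on $1+3+6+12=22$ vertices, so a configuration $C$ is determined by a choice of \emph{externally uncovered} status for each of the $12$ third neighbors (those with no neighbor at distance $4$ from $v$ in $I$), together with an arbitrary labeling $u_1,u_2,u_3$ of the first neighbors and $w_{i1},w_{i2}$ of the second neighbors beneath each $u_i$. Up to the natural symmetries (permuting branches at $v$ and at each $u_i$ and $w_{ij}$), this yields a finite set $\mathcal C_7$ of configurations which, while large, is enumerable by the same routine as in Section~\ref{sec:computFunc}.

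For each $C \in \mathcal C_7$, compute closed-form expressions, rational in $\lambda$, for $\alpha_{C,v}(\lambda)$, for $\gamma^v_t(C)$ and $\gamma^u_t(C)$ as in Lemma~\ref{lem:g6functions}, and also for a new quantity $\gamma^w_t(C)$, the probability that a uniformly chosen second neighbor $w$ of $v$ has $t$ occupied neighbors. The extra degree of freedom from extending the local view to depth $3$ is captured by adding second-neighbor consistency constraints $\gamma^u_t \equiv \gamma^w_t$ to the first-neighbor ones $\gamma^v_t \equiv \gamma^u_t$ already used. The resulting LP reads
\begin{align*}
\alpha_{\mathrm{max}}(\lambda)=\max\,&\sum_{C\in\mathcal C_7}\alpha_{C,v}(\lambda)p(C)\text{ subject to}\\
&\sum_C p(C)=1,\\
&\sum_C p(C)(\gamma^v_t(C)-\gamma^u_t(C))=0,\ t=0,1,2,\\
&\sum_C p(C)(\gamma^u_t(C)-\gamma^w_t(C))=0,\ t=0,1,2,\\
&p(C)\ge 0.
\end{align*}
The new $\gamma^u \equiv \gamma^w$ constraints are essential: they are precisely what lets the LP \emph{see} length-$7$ cycles, and correspond to the extra depth that separates $H_{3,8}$ from $H_{3,6}$ as candidate extremizer. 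The target dual value is $\Lambda_p = \alpha_{H_{3,8}}(\lambda)$; by vertex-transitivity, $H_{3,8}$ realizes only a handful of configurations in $\mathcal C_7$ (indexed by the number of occupied vertices at distance $4$ from $v$, collapsed by symmetry), and one obtains candidate dual multipliers $\Lambda^{vu}_t$, $\Lambda^{uw}_t$ as rational functions of $\lambda$ by imposing equality of the dual constraint on exactly these configurations.

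Feasibility of the dual must then be certified by showing that a rescaled slack function
\[
F_{\mathrm{max}}(C) \;=\; Q(\lambda)\cdot Z(C)\cdot\text{SLACK}_{\mathrm{max}}(\lambda,\Lambda^*,C)
\]
for a suitable positive polynomial $Q(\lambda)$ is, for every $C\in\mathcal C_7$, either identically zero or a polynomial in $\lambda$ with nonnegative coefficients. Uniqueness (equality iff $G$ is a disjoint union of copies of $H_{3,8}$) would then follow the template of Section~\ref{sec:g5}: complementary slackness forces the support of an optimal distribution to lie inside the configurations realized by $H_{3,8}$, which in turn forces every vertex of $G$ to have a tree-like depth-$3$ neighborhood with third-neighbor count attaining the Moore bound, and $H_{3,8}$ is the unique cubic cage of girth~$8$ on $30$ vertices.

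The main obstacle is twofold. First, $|\mathcal C_7|$ is considerably larger than $|\mathcal C_5|$, so producing and verifying the dual certificate will require substantially heavier computer-assisted symbolic computation than the ancillary files accompanying Claims~\ref{cla:31}--\ref{cla:34}; one may even need to partition $(0,\infty)$ into subintervals and choose different dual multipliers on each, as was done there. More seriously, it is not a priori clear that the LP with just first- and second-neighbor consistency is \emph{tight}; fractional relaxation solutions unrelated to $H_{3,8}$ may survive, in which case one would have to strengthen the LP, for instance by imposing consistency of higher joint moments of the occupancies at depths $0,1,2$, or by enlarging the local view to radius~$4$ (at the cost of another combinatorial explosion). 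Whether some clean structural property of the Moore graph $H_{3,8}$ — perhaps its interpretation as the Levi graph of the generalized quadrangle $W(2)$ — can be exploited to keep the certificate humanly tractable is, to our mind, the most interesting open question raised by this approach.
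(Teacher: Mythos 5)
This statement is an open conjecture in the paper (Section~\ref{sec:extensions}), not a proved theorem, so there is no proof in the paper to compare against; the authors give no argument for it beyond the general Moore-graph heuristic of Conjecture~\ref{conj:Moore}. What you have written is a plausible plan of attack that follows the paper's occupancy-fraction LP method rather than a proof, and you are commendably upfront about the two genuine obstacles (size of $\mathcal C_7$, and whether the relaxation is even tight). As such, it should not be graded as a proof of the conjecture.

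There is, however, a concrete error in the setup that would have to be fixed before the plan could get off the ground. You assert that for a cubic graph of girth at least $7$ the radius-$3$ ball around $v$ is the complete cubic tree on $22$ vertices and that a configuration is determined solely by the externally-uncovered status of the $12$ third neighbors. This is only correct for girth at least $8$. For girth exactly $7$ the radius-$3$ ball can contain edges between two third neighbors lying in different branches of the tree: such an edge closes a cycle of length exactly $7$ through $v$ and is therefore allowed. The situation is precisely analogous to the passage from Section~\ref{sec:girth6} (girth $\ge 6$, where the depth-$2$ local view is a tree) to Section~\ref{sec:g5} (girth $\ge 5$, where edges appear inside the second neighborhood). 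So $\mathcal C_7$ must be enlarged to include all configurations with matchings or longer structures among the twelve third neighbors subject to the girth constraint, and the $\gamma$-functions must be derived for those as well, exactly as the paper does in Appendix~\ref{app:conf} and Section~\ref{sec:computFunc} for $\mathcal C_5$. Omitting these configurations would make the LP an invalid relaxation and the resulting bound would not apply to all girth-$\ge 7$ cubic graphs. Beyond this correction, the rest of your outline — adding second-neighbor consistency $\gamma^u_t\equiv\gamma^w_t$, targeting $\Lambda_p=\alpha_{H_{3,8}}$, fixing the dual by forcing equality on the $H_{3,8}$-realizable configurations, and certifying feasibility by sign analysis of a rescaled slack — is the natural extension of the paper's method and is consistent with how Theorems~\ref{thm:d3g4} and~\ref{thm:d3g6} are proved; but whether the LP with only these pairwise consistency constraints is tight remains, as you say, entirely unclear.
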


We also conjecture that the analogous version of Theorem~\ref{thm:d3g6} holds for $4$-regular graphs; that is, the $(4,6)$-cage graph, $H_{4,6}$ (see Figure~5), maximizes the occupancy fraction for all $\lam>0$ over all $4$-regular graphs of girth at least $5$. 
\begin{conj}
For every $4$-regular graph $G$ of girth at least $5$, and for every $\lam >0$,
\[ \alpha_G(\lam) \le \alpha_{H_{4,6}}(\lam).\]
\end{conj}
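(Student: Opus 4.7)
The plan is to extend the linear programming method of Sections~\ref{sec:girth6} and \ref{sec:g5} from cubic graphs to $4$-regular graphs of girth at least $5$, with $H_{4,6}$ playing the role that $H_{3,6}$ played in Theorem~\ref{thm:d3g6}. The $(4,6)$-cage $H_{4,6}$ is a $4$-regular vertex-transitive Moore graph on $1 + 3^{2} + 4(1+3) = 26$ vertices, so its local view from any vertex consists of a small and highly symmetric set of configurations, analogous to the four configurations $(0,0,0),(1,0,0),(1,1,1),(2,2,2)$ that the Heawood graph contributes to $\mathcal C_6$.

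First I would set up the local view exactly as in Section~\ref{sec:g5}: for a random vertex $v$ in a $4$-regular graph $G$ of girth at least $5$, order the neighbors $u_1,\dots,u_4$ and let $w_{ij}$, $j\in\{1,2,3\}$, be the second neighbors of $v$ joined to $u_i$. These $12$ second neighbors are distinct, but the second neighborhood can now contain edges coming from $5$-cycles through $v$. A configuration is $C=(W,E_{12},E_{22})$ as in Section~\ref{sec:g5}, and I would enumerate the set $\mathcal C_5^{(4)}$ of all configuration types up to symmetry by an exhaustive case analysis, in the spirit of Appendix~\ref{app:conf} but now with $d=4$. The formulas for $\alpha_{C,v}(\lambda)$, $\gamma_t^v(C)$ and $\gamma_t^u(C)$ for $t\in\{0,1,2,3,4\}$ extend verbatim from Section~\ref{sec:computFunc}. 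I would then form the LP
\begin{align*}
\alpha_{\mathrm{max}}(\lambda) = \max\ &\sum_{C\in \mathcal C_5^{(4)}} \alpha_{C,v}(\lambda)\, p(C) \\
&\text{subject to } \sum_{C} p(C)=1,\ p(C)\ge 0, \\
&\sum_{C} p(C)\bigl(\gamma_t^v(C)-\gamma_t^u(C)\bigr)=0\ \text{for } t=0,1,2,3,
\end{align*}
and its dual, attempting to exhibit dual variables $\Lambda_p^*=\alpha_{H_{4,6}}(\lambda),\Lambda_0^*,\dots,\Lambda_3^*$ certifying the upper bound. These dual variables would be obtained by forcing the dual inequality to hold with equality on the configurations in the support of $H_{4,6}$ and solving the resulting linear system, just as in the derivation of Claim~\ref{claim:dualFeas}.

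The hard part will be verifying that the slack function is non-negative on every $C\in\mathcal C_5^{(4)}$. There are two principal difficulties. First, $\mathcal C_5^{(4)}$ is substantially larger than the $46$-element set $\mathcal C_5$ of Section~\ref{sec:g5}, since both the number of edge patterns in the second neighborhood and the number of boundary-occupancy patterns grow sharply with $d$; so the symbolic verification is considerably heavier than the ancillary calculation for Theorem~\ref{thm:d3g6} and will certainly require computer algebra. Second, there is no a priori reason that a single rational choice of dual variables produces polynomial slacks with uniformly positive coefficients in $\lambda$: as in Section~\ref{sec:g4}, one may have to partition $(0,\infty)$ into several intervals and, on each, reparametrize $\lambda=\lambda(t)$ by a M\"obius map sending $[0,\infty)$ onto the interval, then certify each slack as a ratio of polynomials in $t$ with positive coefficients. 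Uniqueness of $H_{4,6}$ as the extremizer would follow, exactly as in Section~\ref{sec:g5}, from complementary slackness combined with the well-known fact that $H_{4,6}$ is the unique $(4,6)$-cage.
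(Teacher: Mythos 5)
This statement is a \emph{conjecture} in the paper, not a theorem; the paper offers no proof of it, and neither do you. Your proposal correctly identifies the method the paper's framework naturally suggests — extend the local-view LP relaxation of Sections~\ref{sec:girth6} and~\ref{sec:g5} to $4$-regular graphs of girth at least $5$, take $H_{4,6}$ (with its $26$ vertices) as the candidate extremizer, form the primal with consistency constraints $\gamma_t^v=\gamma_t^u$ for $t=0,1,2,3$, and certify optimality by exhibiting dual-feasible multipliers. You are also appropriately candid about why this has not been carried out: the configuration set $\mathcal C_5^{(4)}$ is much larger than the $46$-element $\mathcal C_5$, the symbolic verification becomes heavy, and one may need to split $(0,\infty)$ into intervals and reparametrize as in Section~\ref{sec:g4}.

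But what you have written is a plan, not a proof. The decisive step — actually producing $\Lambda_0^*,\Lambda_1^*,\Lambda_2^*,\Lambda_3^*$ and verifying that the slack function is nonnegative on every $C\in\mathcal C_5^{(4)}$ (and identically zero exactly on the support of $H_{4,6}$) — is left entirely undone, and there is no guarantee ahead of time that the $t=0,\dots,3$ consistency constraints alone suffice: in Section~\ref{sec:g4} the authors already needed to break the range of $\lambda$ into four pieces with different dual solutions and still only obtained the bound on $(0,1]$, so there is genuine risk that the LP relaxation at this depth of local view is not tight for $4$-regular graphs for all $\lambda>0$. That open verification is precisely why the paper states this as a conjecture rather than a theorem, so your proposal does not close the gap between the two.
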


For the lower bound for triangle-free, $4$-regular graphs, we conjecture that the minimum is attained by one of two graphs, depending on the value of $\lam$. Let $G_{ROB}$ be the Robertson graph and $CYC_{13}$ the Cyclotomic-$13$ graph  (see Figure~6).    
\begin{conj}
\label{conj:d4LB}
Let $\lam_4^*  \approx 1.77239$ be the largest real root of the equation 
\[ 90 \lam^6+729 \lam^5-188 \lam^4-1632 \lam^3-1247 \lam^2-363 \lam=38   .  \]
Then for every $4$-regular, triangle-free graph,
\begin{enumerate}
\item If $\lam \in (0, \lam_4^*]$,
\[ \alpha_G(\lam) \ge \alpha_{G_{ROB}}(\lam);\]
\item If $\lam \ge \lam_4^*$,
\[ \alpha_G(\lam) \ge \alpha_{CYC_{13}}(\lam).\]
\end{enumerate}
\end{conj}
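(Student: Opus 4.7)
The plan is to extend the linear programming duality framework of Theorem~\ref{thm:d3g4} from cubic to $4$-regular triangle-free graphs. First I would enumerate the set $\mathcal C$ of depth-$2$ local configurations $C=(W,E_{12},E_{22})$ around a vertex $v$: here $v$ has $4$ first neighbors $u_1,\ldots,u_4$ and up to $12$ distinct second neighbors, possibly with edges among the second neighborhood (the latter only if $G$ contains $4$-cycles). This enumeration is analogous to but considerably larger than the $207$ configurations of Section~\ref{sec:g4} and would require a systematic case analysis, likely computer-assisted as in Appendix~\ref{app:C_4}. With this list in hand, the closed-form expressions for $\alpha_{C,v}$, $Z(C)$, $\gamma^v_t$, $\gamma^u_t$ from Section~\ref{sec:computFunc} generalize verbatim by replacing $U=\{u_1,u_2,u_3\}$ with $U=\{u_1,u_2,u_3,u_4\}$ and allowing $t\in\{0,1,2,3\}$.

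Next I would form the linear program that minimizes $\sum_{C\in\mathcal C} p(C)\alpha_{C,v}(\lam)$ subject to $\sum_C p(C)=1$, the four consistency constraints $\sum_C p(C)(\gamma^v_t(C)-\gamma^u_t(C))=0$ for $t=0,1,2,3$, and $p(C)\ge 0$. Its dual has variables $\Lam_p,\Lam_0,\Lam_1,\Lam_2,\Lam_3$, and the goal is to exhibit dual feasible solutions with $\Lam_p=\alpha_{G_{ROB}}(\lam)$ on $(0,\lam_4^*]$ and $\Lam_p=\alpha_{CYC_{13}}(\lam)$ on $[\lam_4^*,\infty)$. Following Claims~\ref{cla:31}--\ref{cla:34}, I would guess the $\Lam_t^*(\lam)$ by solving the dual constraints to equality on the (few) configurations that occur with positive probability under the hard-core distribution on the candidate extremal graph, then verify slack non-negativity over all configurations in $\mathcal C$. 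As in the cubic case, each of the two intervals likely needs further subdivision, with substitutions $\lam(t)=b(a/b+t)/(1+t)$ mapping $[0,\infty)$ onto $[a,b)$ reducing slack positivity on each piece to positivity of a ratio of polynomials in $t$ with non-negative coefficients, checkable by computer algebra. The critical value $\lam_4^*$ should arise as the unique root where $\alpha_{G_{ROB}}(\lam)=\alpha_{CYC_{13}}(\lam)$, which should yield exactly the polynomial equation appearing in the conjecture. Uniqueness of minimizers would follow from complementary slackness via a structural argument analogous to Section~\ref{sec:g5} and Appendix~\ref{sec:Unqiue52}.

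The main obstacles are twofold. Pure complexity comes first: the configuration set is substantially larger than $\mathcal C_4$, the dual has one more active constraint, and each of several sub-intervals requires its own symbolic verification, so the Mathematica workload may grow prohibitive without careful bookkeeping. More conceptually, $G_{ROB}$ and $CYC_{13}$ have distinct orders ($19$ and $13$) and distinct tight supports, so the dual variables must switch at $\lam_4^*$ while the slack functions remain non-negative across that transition. Identifying the correct subdivision into intervals and the tight support on each piece will likely require first solving the LP numerically on a dense grid of $\lam$ values to detect where the optimal basis changes; if Conjecture~\ref{conj:d4LB} were to fail, this numerical exploration would reveal an intermediate extremal graph that is neither $G_{ROB}$ nor $CYC_{13}$, and the method should in principle detect this.
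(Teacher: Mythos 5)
The statement you are addressing is Conjecture~\ref{conj:d4LB}, which the paper does \emph{not} prove: it is posed as an open conjecture in Section~\ref{sec:extensions}, and the only supporting evidence the authors give is the $\lam\to\infty$ limit, namely the independence-ratio bound of \cite{jones1984independence}. There is therefore no proof in the paper against which to compare your attempt.

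Your proposal correctly identifies the natural line of attack: extend the local-view LP method from $3$-regular to $4$-regular graphs, enumerate the (much larger) set $\mathcal{C}$ of depth-$2$ configurations, generalize the formulas of Section~\ref{sec:computFunc} to $|U|=4$ and $t\in\{0,1,2,3\}$, and search for dual certificates with $\Lam_p=\alpha_{G_{ROB}}(\lam)$ on $(0,\lam_4^*]$ and $\Lam_p=\alpha_{CYC_{13}}(\lam)$ on $[\lam_4^*,\infty)$. Your reading of $\lam_4^*$ as the crossover point where $\alpha_{G_{ROB}}(\lam)=\alpha_{CYC_{13}}(\lam)$ is consistent with how the paper describes $\lam_3^*$ in Section~\ref{sec:g4}, and your identification of the obstacles (combinatorial growth of $\mathcal{C}$, one more dual variable, a basis change at the transition between extremal graphs of different orders $19$ and $13$) is accurate and plausibly explains why the authors leave this as a conjecture.

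That said, your submission is a research plan, not a proof, and you yourself flag this with phrases like ``likely,'' ``should,'' and ``in principle.'' Every load-bearing step is deferred: you do not enumerate the configurations, do not produce candidate dual functions $\Lam_t^*(\lam)$, do not exhibit the subdivision of $(0,\lam_4^*]$ and $[\lam_4^*,\infty)$ into pieces, and do not verify slack non-negativity on any piece. More fundamentally, there is no a priori guarantee that the depth-$2$ LP relaxation for $4$-regular triangle-free graphs has value exactly $\alpha_{G_{ROB}}(\lam)$ or $\alpha_{CYC_{13}}(\lam)$: if the relaxation were strictly weaker at even one $\lam$, no choice of dual variables would close the gap, and one would need a deeper local view or additional valid constraints before the program could certify the bound. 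Until those computations are carried out (or shown to fail), the conjecture remains open, exactly as the paper leaves it.
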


\begin{center}
\begin{figure}[h!]\label{fig:P52_H36} 
\caption{}
\begin{tabular}{ c c }
  
\begin{tikzpicture}
    \node[shape=circle,fill=black] (A) at (0,2) {};
   \node[shape=circle,fill=black] (B1) at (0.929,1.771) {};
    \node[shape=circle,fill=black] (B2) at (-0.929,1.771) {};
   \node[shape=circle,fill=black] (C1) at (1.646,1.136) {};
    \node[shape=circle,fill=black] (C2) at (-1.646,1.136) {};  
   \node[shape=circle,fill=black] (D1) at (1.985,0.241) {};
    \node[shape=circle,fill=black] (D2) at (-1.985,0.241) {};  
   \node[shape=circle,fill=black] (E1) at (1.87,-0.709) {};
    \node[shape=circle,fill=black] (E2) at (-1.87,-0.709) {};  
   \node[shape=circle,fill=black] (F1) at (1.326,-1.497) {};
    \node[shape=circle,fill=black] (F2) at (-1.326,-1.497) {};  
   \node[shape=circle,fill=black] (G1) at (0.479,-1.942) {};
    \node[shape=circle,fill=black] (G2) at (-0.479,-1.942) {};  
    
\draw plot [smooth, tension=2] coordinates { (A) (B1) };
\draw plot [smooth, tension=2] coordinates { (B1) (C1) };
\draw plot [smooth, tension=2] coordinates { (C1) (D1) };
\draw plot [smooth, tension=2] coordinates { (D1) (E1) };
\draw plot [smooth, tension=2] coordinates { (E1) (F1) };
\draw plot [smooth, tension=2] coordinates { (F1) (G1) };
\draw plot [smooth, tension=2] coordinates { (G1) (G2) };
\draw plot [smooth, tension=2] coordinates { (G2) (F2) };
\draw plot [smooth, tension=2] coordinates { (F2) (E2) };
\draw plot [smooth, tension=2] coordinates { (E2) (D2) };
\draw plot [smooth, tension=2] coordinates { (D2) (C2) };
\draw plot [smooth, tension=2] coordinates { (C2) (B2) };
\draw plot [smooth, tension=2] coordinates { (B2) (A) };

\draw plot [smooth, tension=2] coordinates { (A) (F1) };
\draw plot [smooth, tension=2] coordinates { (A) (F2) };
\draw plot [smooth, tension=2] coordinates { (B1) (G1) };
\draw plot [smooth, tension=2] coordinates { (B1) (E2) };
\draw plot [smooth, tension=2] coordinates { (C1) (G2) };
\draw plot [smooth, tension=2] coordinates { (C1) (D2) };
\draw plot [smooth, tension=2] coordinates { (D1) (F2) };
\draw plot [smooth, tension=2] coordinates { (D1) (C2) };
\draw plot [smooth, tension=2] coordinates { (E1) (E2) };
\draw plot [smooth, tension=2] coordinates { (E1) (B2) };
\draw plot [smooth, tension=2] coordinates { (F1) (D2) };
\draw plot [smooth, tension=2] coordinates { (G1) (C2) };
\draw plot [smooth, tension=2] coordinates { (G2) (B2) };
\end{tikzpicture}
 
  &

 \hspace{0.5cm}
\begin{tikzpicture}
    \node[shape=circle,fill=black] (A) at (0,2) {};
   \node[shape=circle,fill=black] (B1) at (0.649,1.891) {};
    \node[shape=circle,fill=black] (B2) at (-0.649,1.891) {};
   \node[shape=circle,fill=black] (C1) at (1.228,1.578) {};
    \node[shape=circle,fill=black] (C2) at (-1.228,1.578) {};  
   \node[shape=circle,fill=black] (D1) at (1.679,1.094) {};
    \node[shape=circle,fill=black] (D2) at (-1.679,1.094) {};  
   \node[shape=circle,fill=black] (E1) at (1.939,0.491) {};
    \node[shape=circle,fill=black] (E2) at (-1.939,0.491) {};  
   \node[shape=circle,fill=black] (F1) at (1.993,-0.165) {};
    \node[shape=circle,fill=black] (F2) at (-1.993,-0.165) {};  
   \node[shape=circle,fill=black] (G1) at (1.831,-0.803) {};
    \node[shape=circle,fill=black] (G2) at (-1.831,-0.803) {}; 
   \node[shape=circle,fill=black] (H1) at (1.471,-1.354) {};
    \node[shape=circle,fill=black] (H2) at (-1.471,-1.354) {}; 
   \node[shape=circle,fill=black] (I1) at (0.952,-1.759) {};
    \node[shape=circle,fill=black] (I2) at (-0.952,-1.759) {}; 
   \node[shape=circle,fill=black] (J1) at (0.329,-1.972) {};
    \node[shape=circle,fill=black] (J2) at (-0.329,-1.972) {}; 

\draw plot [smooth, tension=2] coordinates { (A) (B1) };
\draw plot [smooth, tension=2] coordinates { (B1) (C1) };
\draw plot [smooth, tension=2] coordinates { (C1) (D1) };
\draw plot [smooth, tension=2] coordinates { (D1) (E1) };
\draw plot [smooth, tension=2] coordinates { (E1) (F1) };
\draw plot [smooth, tension=2] coordinates { (F1) (G1) };
\draw plot [smooth, tension=2] coordinates { (G1) (H1) };
\draw plot [smooth, tension=2] coordinates { (H1) (I1) };
\draw plot [smooth, tension=2] coordinates { (I1) (J1) };
\draw plot [smooth, tension=2] coordinates { (J1) (J2) };
\draw plot [smooth, tension=2] coordinates { (J2) (I2) };
\draw plot [smooth, tension=2] coordinates { (I2) (H2) };
\draw plot [smooth, tension=2] coordinates { (H2) (G2) };
\draw plot [smooth, tension=2] coordinates { (G2) (F2) };
\draw plot [smooth, tension=2] coordinates { (F2) (E2) };
\draw plot [smooth, tension=2] coordinates { (E2) (D2) };
\draw plot [smooth, tension=2] coordinates { (D2) (C2) };
\draw plot [smooth, tension=2] coordinates { (C2) (B2) };
\draw plot [smooth, tension=2] coordinates { (B2) (A) };

\draw plot [smooth, tension=2] coordinates { (A) (E1) };
\draw plot [smooth, tension=2] coordinates { (A) (F2) };
\draw plot [smooth, tension=2] coordinates { (B1) (J1) };
\draw plot [smooth, tension=2] coordinates { (B1) (H2) };
\draw plot [smooth, tension=2] coordinates { (C1) (G1) };
\draw plot [smooth, tension=2] coordinates { (C1) (C2) };
\draw plot [smooth, tension=2] coordinates { (D1) (I2) };
\draw plot [smooth, tension=2] coordinates { (D1) (E2) };
\draw plot [smooth, tension=2] coordinates { (E1) (I1) };
\draw plot [smooth, tension=2] coordinates { (F1) (H2) };
\draw plot [smooth, tension=2] coordinates { (F1) (D2) };

\draw plot [smooth, tension=2] coordinates { (G1) (J2) };
\draw plot [smooth, tension=2] coordinates { (H1) (B2) };
\draw plot [smooth, tension=2] coordinates { (H1) (E2) };
\draw plot [smooth, tension=2] coordinates { (I1) (G2) };
\draw plot [smooth, tension=2] coordinates { (J1) (D2) };
\draw plot [smooth, tension=2] coordinates { (J2) (F2) };
\draw plot [smooth, tension=2] coordinates { (I2) (B2) };
\draw plot [smooth, tension=2] coordinates { (G2) (C2) };
\end{tikzpicture}
\\
\\
Cyclotomic-13 graph $CYC_{13}$ 
&
\hspace{0.5cm}
Robertson Graph $G_{\text{ROB}}$
  \end{tabular}
  \end{figure}
  \end{center}
Conjecture~\ref{conj:d4LB} is known in the limit as $\lam \to \infty$: all triangle-free graphs of maximum degree $4$ have independence ratio at least $4/13$ and this is achieved by $CYC_{13}$~\cite{jones1984independence}.

\subsection{Graph homomorphisms}

We can also ask about generalizations from independent sets to graph homomorphisms. Let $\text{Hom}(G,H)$ denote the number of graph homomorphisms from $G$ into $H$; that is, the number of mappings $\phi: V(G) \to V(H)$ so that $(u,v) \in E(G) \Rightarrow (\phi(u), \phi(v)) \in E(H)$.  The number of independent sets of $G$ is $\text{Hom}(G, H_{ind})$ where $H_{ind}$ is a single edge with one looped vertex. The number of proper vertex $q$-colorings of $G$ is $\text{Hom}(G, K_q)$. Galvin and Tetali~\cite{galvin2004weighted} showed that for all $d$-regular bipartite $G$, and all $H$, 
\[ \text{Hom}(G,H)^{1/|V(G)|} \le  \text{Hom}(K_{d,d},H)^{1/2d} .\]
In \cite{cohen2016widom}, Cohen, Csikv{\'a}ri, Perkins, and Tetali conjectured that $G$ being triangle-free can replace the bipartite condition.   We ask (but don't dare conjecture) whether something analogous holds for graphs of larger girth. 
\begin{question}
Is it true that for all graphs $H$ and all cubic graphs $G$ of girth at least $6$,
\[ \text{Hom}(G,H)^{1/|V(G)|} \le \text{Hom}(H_{3,6},H)^{1/14} \, ?\]
\end{question}
There are counterexamples if we replace girth at least $6$ by girth at least $5$, e.g. $H$ is two looped vertices. 

\section*{Acknowledgements}
We thank Daniela K\"uhn and Deryk Osthus for fruitful discussions during the initial steps of this project.

\bibliography{girthBib}
\bibliographystyle{abbrv}

\newpage
\appendix

\section{Proof that $\mathcal{C}_5$ is the set of all possible configurations for free second neighborhoods of cubic graphs with girth at least $5$.}\label{app:conf}

Let $u_1,u_2,u_3$ be the neighbors of $v$. For $i\in\{1,2,3\}$, let  $W_i=\{w_{i1},w_{21}\}$ be the neighbors of $u_i$ that are second neighbors of $v$, and let $W=W_1\cup W_2\cup W_3$.

Suppose that $G[W]$ contains a copy of a graph $H$, then we will write $\phi: V(H)\to W$ for the map that assigns each vertex of $H$ to its corresponding vertex in the copy of $H$ in $G[W]$. 
If $H$ is a graph that can be contained in $G[W]$, then its order is at most $6$, its maximum degree of $H$ is at most $2$ and it has girth at least $5$. Moreover, the endpoints of an edge from $H$ cannot be mapped to the same set $W_i$, since otherwise $W_i\cup \{u_i\}$ would induce a triangle in $G$.

Let $S_n$ denote the symmetric group of order $n$. Two configuration types are \emph{equal up to symmetries} if there exist $\sigma\in S_3$, $\tau_1,\tau_2,\tau_3\in S_2$  such that the bijection $\rho: W\to W$ defined by $\rho(w_{ij})=w_{\sigma(i) \tau_i(j)}$ transforms one configuration into the other one.

We now enumerate the different configurations for the second neighborhood of $v$, depending on the number of edges in $G[W]$:
\begin{itemize}
\item[0)] If $G[W]$ induces no edges, the configuration is of type $C_0$.

\item[1)] If $G[W]$ induces one edge, then, up to symmetries, $C_1$ is the only possible type of configuration.

\item[2)] If $G[W]$ induces two edges, then we have two cases for $H$:
\begin{itemize}
\item[2.a)] $H$ is a path $x_1x_2x_3$ of length $2$. Since no edges can be contained in any $W_i$, we  have that that $\phi(x_2)$ lies in a different set $W_i$ that the images of the other two vertices. We may assume $\phi(x_2)\in W_1$ and $\phi(x_1)\in W_2$. If $\phi(x_3)\in W_2$, then $G[W\cup u_2]$  contains a $C_4$. Therefore, $\phi(x_3)\in W_3$, and $C_2$ is the only possible configuration type.

\item[2.b)] $H$ is composed by two vertex-disjoint edges $x_1x_2$ and $y_1y_2$. We may assume that $W_1$ contains the images of two vertices. Since  no edge is included within $W_1$, we may assume $\phi(x_1)=w_{11}$ and $\phi(y_1)=w_{12}$. If $\phi(x_2)$ and $\phi(y_2)$ lie in the same set $W_i$, we may assume that $\phi(x_2)=w_{22}$ and $\phi(y_2)=w_{21}$, giving rise to a configuration of type $C_3$. Otherwise, up to symmetries, $\phi(x_2)=w_{21}$ and $\phi(y_2)=w_{31}$, giving rise to a configuration of type $C_4$.
\end{itemize} 

\item[3)] If $G[W]$ induces $3$ edges, then we have three cases for $H$:
\begin{itemize}
\item[3.a)] $H$ is composed by three vertex-disjoint edges $x_1x_2$, $y_1y_2$ and $z_1z_2$. Since edges cannot lie within the sets $W_i$, the type of configuration $C_5$ is the only possible one, up to symmetries.

\item[3.b)] $H$ is a path of length $3$, $x_1x_2x_3x_4$. We can assume it contains two vertices in $W_1$. Moreover, these should be $\phi(x_1)$ and $\phi(x_4)$, since otherwise $G[W\cup\{u_1\}]$ would contain a $C_4$. A similar argument shows that $\phi(x_2)$ and $\phi(x_3)$ lie in different sets $W_i$ and thus, up to symmetries, the only type of configuration is $C_6$.
	
\item[3.c)] $H$ is composed by the vertex-disjoint union of a path of length $2$, $x_1x_2x_3$, and an edge $y_1y_2$. As in the case where we were embedding only a path of length $2$, we may assume that $\phi(x_1)=w_{11}$, $\phi(x_2)=w_{21}$ and $\phi(x_3)=w_{31}$. Up to symmetries we may assume that $\phi(y_1)=w_{22}$. If $\phi(y_2)=w_{12}$, then we obtain a configuration of type $C_7$ and if $\phi(y_2)=w_{32}$, of type $C_8$.
\end{itemize}

\item[4)] If $G[W]$ induces $4$ edges, then we have three cases for $H$:
\begin{itemize}
\item[4.a)] $H$ is composed by two vertex disjoint paths of length $2$, $x_1x_2x_3$ and $y_1y_2y_3$. Then we may assume that $\phi(x_2)=w_{11}$.  If  $\phi(y_2)\neq w_{12}$, we may assume that $\phi(y_2)=w_{32}$ and, up to symmetries, we obtain a configuration of type $C_9$. If $\phi(y_2)=w_{12}$, up to symmetries, the only type of configuration is $C_{10}$.

\item[4.b)] $H$ is composed by the disjoint union of a path of length $3$, $x_1x_2x_3x_4$ and an edge $y_1y_2$. Then we may assume that $\phi(y_1)=w_{22}$ and $\phi(y_2)=w_{23}$ and, as in the case where we were embedding only a path of length $3$, we should have $\phi(x_1)=w_{11}$ and $\phi(x_4)=w_{12}$. Up to symmetries, this gives rise to a configuration of type $C_{11}$.

\item[4.c)] $H$ is composed by a path of length $4$, $x_1x_2x_3x_4x_5$. We can assume it contains only one vertex in $W_3$. Since $H$ contains a path of length $4$ as a subpath, we can also assume that $\phi(x_1)=w_{11}$, $\phi(x_2)=w_{21}$, $\phi(x_3)=w_{31}$ and $\phi(x_4)=w_{12}$. Up to symmetries, the only possible type of configuration is $C_{12}$.
\end{itemize}

\item[5)] If $G[W]$ induces $5$ edges, then $G[W]$ must be isomorphic to a path of length $5$. Note that if $G[W]$ would induce a $C_5$, one could find a set $W_i$ such that $w_{i1}$ and $w_{i2}$ have a common neighbor in $W$, creating a $C_4$ in $G[W\cup\{u_i\}]$. Let $H$ be a path of length $5$, $x_1x_2x_3x_4x_5x_6$. Since $H$ contains a path of length $4$, we can assume that  $\phi(x_1)=w_{11}$, $\phi(x_2)=w_{21}$, $\phi(x_3)=w_{31}$, $\phi(x_4)=w_{12}$ and $\phi(x_5)=w_{22}$. Then $\phi(x_6)=w_{32}$ and the only type of configuration is $C_{13}$.

\item[6)] If $G[W]$ induces $6$ edges, then $G[W]$ must be isomorphic to a cycle of length $6$. A similar argument as before gives $C_{14}$ as the only possible configuration.

\end{itemize}

\newpage
\section{Configurations for second free neighborhoods in triangle-free graphs}\label{app:C_4}
In this part of the appendix we display all the configurations in $\mathcal{C}_4$.

\subsection{}
The following configurations are obtained if the second neighborhood  of a vertex has size six. The variables $x_i\in\{0,1\}$ indicates whether the $i$-th vertex in the second neighborhood is externally uncovered ($x_i=1$) or not ($x_i=0$).

\begin{center}
\begin{tabular}{ c c c }
  \begin{tikzpicture}
    \node[shape=circle,draw=black] (A) at (0,0) {};
    \node[shape=circle,draw=black] (B) at (0.5,0) {};
    \node[shape=circle,draw=black] (C) at (1.2,0) {};
    \node[shape=circle,draw=black] (D) at (1.7,0) {};
    \node[shape=circle,draw=black] (E) at (2.4,0) {};
    \node[shape=circle,draw=black] (F) at (2.9,0) {} ;
    
    \node (G) at (0,-0.5) {$u_1$};
    \node (H) at (0.5,-0.5) {$u_1$};
    \node (I) at (1.2,-0.5) {$u_2$};
    \node (J) at (1.7,-0.5) {$u_2$};
    \node (K) at (2.4,-0.5) {$u_3$};
    \node (L) at (2.9,-0.5) {$u_3$} ;
\end{tikzpicture}
  &
  \hspace{0.5cm}
\begin{tikzpicture}
    \node[shape=circle,fill=black] (A) at (0,0) {};
    \node[shape=circle,draw=black] (B) at (0.5,0) {};
    \node[shape=circle,fill=black] (C) at (1.2,0) {};
    \node[shape=circle,draw=black] (D) at (1.7,0) {};
    \node[shape=circle,draw=black] (E) at (2.4,0) {};
    \node[shape=circle,draw=black] (F) at (2.9,0) {} ;
    
    \node (G) at (0,-0.5) {$u_1$};
    \node (H) at (0.5,-0.5) {$u_1$};
    \node (I) at (1.2,-0.5) {$u_2$};
    \node (J) at (1.7,-0.5) {$u_2$};
    \node (K) at (2.4,-0.5) {$u_3$};
    \node (L) at (2.9,-0.5) {$u_3$} ;

\draw plot [smooth, tension=2] coordinates { (0,0.15) (0.6,0.5) (1.2,0.15)};
\end{tikzpicture}
  &

  \hspace{0.5cm}
  \begin{tikzpicture}
    \node[shape=circle,fill=black] (A) at (0,0) {};
    \node[shape=circle,fill=black] (B) at (0.5,0) {};
    \node[shape=circle,fill=black] (C) at (1.2,0) {};
    \node[shape=circle,fill=black] (D) at (1.7,0) {};
    \node[shape=circle,draw=black] (E) at (2.4,0) {};
    \node[shape=circle,draw=black] (F) at (2.9,0) {} ;
    
     \node (G) at (0,-0.5) {$u_1$};
    \node (H) at (0.5,-0.5) {$u_1$};
    \node (I) at (1.2,-0.5) {$u_2$};
    \node (J) at (1.7,-0.5) {$u_2$};
    \node (K) at (2.4,-0.5) {$u_3$};
    \node (L) at (2.9,-0.5) {$u_3$} ;
    
\draw plot [smooth, tension=2] coordinates { (0,0.15) (0.85 ,0.6) (1.7,0.15)};
\draw plot [smooth, tension=2] coordinates { (0.5,0.15) (0.85,0.4) (1.2,0.15)};
\end{tikzpicture}
 \\
$C^1_0(x_1,x_2,\dots,x_6)$ & \hspace{0.5cm} $C^1_1(x_1,x_2,\dots,x_6)$ & \hspace{0.5cm} $C^1_2(x_1,x_2,\dots,x_6)$\\
 \\

\begin{tikzpicture}
    \node[shape=circle,fill=black] (A) at (0,0) {};
    \node[shape=circle,fill=black] (B) at (0.5,0) {};
    \node[shape=circle,fill=black] (C) at (1.2,0) {};
    \node[shape=circle,draw=black] (D) at (1.7,0) {};
    \node[shape=circle,fill=black] (E) at (2.4,0) {};
    \node[shape=circle,draw=black] (F) at (2.9,0) {} ;

     \node (G) at (0,-0.5) {$u_1$};
    \node (H) at (0.5,-0.5) {$u_1$};
    \node (I) at (1.2,-0.5) {$u_2$};
    \node (J) at (1.7,-0.5) {$u_2$};
    \node (K) at (2.4,-0.5) {$u_3$};
    \node (L) at (2.9,-0.5) {$u_3$} ;
    
\draw plot [smooth, tension=2] coordinates { (0,0.15) (1.2 ,0.6) (2.4,0.15)};
\draw plot [smooth, tension=2] coordinates { (0.5,0)(1.2,0)};
\end{tikzpicture}
&
 \hspace{0.5cm}
\begin{tikzpicture}
    \node[shape=circle,fill=black] (A) at (0,0) {};
    \node[shape=circle,draw=black] (B) at (0.5,0) {};
    \node[shape=circle,fill=black] (C) at (1.2,0) {};
    \node[shape=circle,fill=black] (D) at (1.7,0) {};
    \node[shape=circle,draw=black] (E) at (2.4,0) {};
    \node[shape=circle,draw=black] (F) at (2.9,0) {} ;
    
     \node (G) at (0,-0.5) {$u_1$};
    \node (H) at (0.5,-0.5) {$u_1$};
    \node (I) at (1.2,-0.5) {$u_2$};
    \node (J) at (1.7,-0.5) {$u_2$};
    \node (K) at (2.4,-0.5) {$u_3$};
    \node (L) at (2.9,-0.5) {$u_3$} ;

\draw plot [smooth, tension=2] coordinates { (0,0.15) (0.6,0.4) (1.2,0.15)};
\draw plot [smooth, tension=2] coordinates { (0,0.15) (0.85,0.6) (1.7,0.15)};
\end{tikzpicture}  

&
\hspace{0.5cm}
\begin{tikzpicture}
    \node[shape=circle,fill=black] (A) at (0,0) {};
    \node[shape=circle,draw=black] (B) at (0.5,0) {};
    \node[shape=circle,fill=black] (C) at (1.2,0) {};
    \node[shape=circle,draw=black] (D) at (1.7,0) {};
    \node[shape=circle,fill=black] (E) at (2.4,0) {};
    \node[shape=circle,draw=black] (F) at (2.9,0) {} ;
    
     \node (G) at (0,-0.5) {$u_1$};
    \node (H) at (0.5,-0.5) {$u_1$};
    \node (I) at (1.2,-0.5) {$u_2$};
    \node (J) at (1.7,-0.5) {$u_2$};
    \node (K) at (2.4,-0.5) {$u_3$};
    \node (L) at (2.9,-0.5) {$u_3$} ;

\draw plot [smooth, tension=2] coordinates { (0,0.15) (0.6,0.4) (1.2,0.15)};
\draw plot [smooth, tension=2] coordinates { (0,0.15) (1.2,0.6) (2.4,0.15)};
\end{tikzpicture}  
  
 \\
 $C^1_3(x_1,x_2,\dots,x_6)$ & \hspace{0.5cm} $C^1_4(x_1,x_2,\dots,x_6)$ & \hspace{0.5cm} $C^1_5(x_1,x_2,\dots,x_6)$\\
 \\

\begin{tikzpicture}
    \node[shape=circle,fill=black] (A) at (0,0) {};
    \node[shape=circle,fill=black] (B) at (0.5,0) {};
    \node[shape=circle,fill=black] (C) at (1.2,0) {};
    \node[shape=circle,fill=black] (D) at (1.7,0) {};
    \node[shape=circle,fill=black] (E) at (2.4,0) {};
    \node[shape=circle,draw=black] (F) at (2.9,0) {} ;

     \node (G) at (0,-0.5) {$u_1$};
    \node (H) at (0.5,-0.5) {$u_1$};
    \node (I) at (1.2,-0.5) {$u_2$};
    \node (J) at (1.7,-0.5) {$u_2$};
    \node (K) at (2.4,-0.5) {$u_3$};
    \node (L) at (2.9,-0.5) {$u_3$} ;
    
\draw plot [smooth, tension=2] coordinates { (0,0.15) (0.6 ,0.4) (1.2,0.15)};
\draw plot [smooth, tension=2] coordinates { (0,0.15) (0.85,0.6) (1.7,0.15)};
\draw plot [smooth, tension=2] coordinates { (0.5,0.15) (1.45,0.6) (2.4,0.15)};
\end{tikzpicture}

  &
  \hspace{0.5cm}
\begin{tikzpicture}
    \node[shape=circle,fill=black] (A) at (0,0) {};
    \node[shape=circle,fill=black] (B) at (0.5,0) {};
    \node[shape=circle,fill=black] (C) at (1.2,0) {};
    \node[shape=circle,fill=black] (D) at (1.7,0) {};
    \node[shape=circle,fill=black] (E) at (2.4,0) {};
    \node[shape=circle,draw=black] (F) at (2.9,0) {} ;

     \node (G) at (0,-0.5) {$u_1$};
    \node (H) at (0.5,-0.5) {$u_1$};
    \node (I) at (1.2,-0.5) {$u_2$};
    \node (J) at (1.7,-0.5) {$u_2$};
    \node (K) at (2.4,-0.5) {$u_3$};
    \node (L) at (2.9,-0.5) {$u_3$} ;
    
\draw plot [smooth, tension=2] coordinates { (0,0.15) (0.6 ,0.4) (1.2,0.15)};
\draw plot [smooth, tension=2] coordinates { (0.5,0.15) (1.1,0.4) (1.7,0.15)};
\draw plot [smooth, tension=2] coordinates { (0,0.15) (1.2,0.6) (2.4,0.15)};
\end{tikzpicture}
  & 
\hspace{0.5cm}
\begin{tikzpicture}
    \node[shape=circle,fill=black] (A) at (0,0) {};
    \node[shape=circle,draw=black] (B) at (0.5,0) {};
    \node[shape=circle,fill=black] (C) at (1.2,0) {};
    \node[shape=circle,fill=black] (D) at (1.7,0) {};
    \node[shape=circle,fill=black] (E) at (2.4,0) {};
    \node[shape=circle,fill=black] (F) at (2.9,0) {} ;

     \node (G) at (0,-0.5) {$u_1$};
    \node (H) at (0.5,-0.5) {$u_1$};
    \node (I) at (1.2,-0.5) {$u_2$};
    \node (J) at (1.7,-0.5) {$u_2$};
    \node (K) at (2.4,-0.5) {$u_3$};
    \node (L) at (2.9,-0.5) {$u_3$} ;
    
\draw plot [smooth, tension=2] coordinates { (0,0.15) (0.6 ,0.4) (1.2,0.15)};
\draw plot [smooth, tension=2] coordinates { (1.7,0.15) (2.3,0.4) (2.9,0.15)};
\draw plot [smooth, tension=2] coordinates { (0,0.15) (1.2,0.6) (2.4,0.15)};
\end{tikzpicture}  
 
  \\ 
 $C^1_6(x_1,x_2,\dots,x_6)$ & \hspace{0.5cm} $C^1_7(x_1,x_2,\dots,x_6)$ & \hspace{0.5cm} $C^1_8(x_1,x_2,\dots,x_6)$\\
 \\

\begin{tikzpicture}
    \node[shape=circle,fill=black] (A) at (0,0) {};
    \node[shape=circle,fill=black] (B) at (0.5,0) {};
    \node[shape=circle,fill=black] (C) at (1.2,0) {};
    \node[shape=circle,fill=black] (D) at (1.7,0) {};
    \node[shape=circle,fill=black] (E) at (2.4,0) {};
    \node[shape=circle,fill=black] (F) at (2.9,0) {} ;

     \node (G) at (0,-0.5) {$u_1$};
    \node (H) at (0.5,-0.5) {$u_1$};
    \node (I) at (1.2,-0.5) {$u_2$};
    \node (J) at (1.7,-0.5) {$u_2$};
    \node (K) at (2.4,-0.5) {$u_3$};
    \node (L) at (2.9,-0.5) {$u_3$} ;
    
\draw plot [smooth, tension=2] coordinates { (0.5,0) (1.2,0)};
\draw plot [smooth, tension=2] coordinates { (1.7,0) (2.4,0)};
\draw plot [smooth, tension=2] coordinates { (0,0.15) (1.45,0.5)  (2.9,0.15)};
\end{tikzpicture}  
 
 & 
 \hspace{0.5cm}
 
  \begin{tikzpicture}
    \node[shape=circle,fill=black] (A) at (0,0) {};
    \node[shape=circle,fill=black] (B) at (0.5,0) {};
    \node[shape=circle,fill=black] (C) at (1.2,0) {};
    \node[shape=circle,draw=black] (D) at (1.7,0) {};
    \node[shape=circle,fill=black] (E) at (2.4,0) {};
    \node[shape=circle,draw=black] (F) at (2.9,0) {} ;

     \node (G) at (0,-0.5) {$u_1$};
    \node (H) at (0.5,-0.5) {$u_1$};
    \node (I) at (1.2,-0.5) {$u_2$};
    \node (J) at (1.7,-0.5) {$u_2$};
    \node (K) at (2.4,-0.5) {$u_3$};
    \node (L) at (2.9,-0.5) {$u_3$} ;

\draw plot [smooth, tension=2] coordinates { (0,0.15) (0.6 ,0.4) (1.2,0.15)};
\draw plot [smooth, tension=2] coordinates { (0.5,0.15) (1.45,0.6) (2.4,0.15)};
\draw plot [smooth, tension=2] coordinates { (1.2,0.15) (1.8,0.4) (2.4,0.15)};

\end{tikzpicture}
& 

 \hspace{0.5cm}
 
  \begin{tikzpicture}
    \node[shape=circle,fill=black] (A) at (0,0) {};
    \node[shape=circle,draw=black] (B) at (0.5,0) {};
    \node[shape=circle,fill=black] (C) at (1.2,0) {};
    \node[shape=circle,fill=black] (D) at (1.7,0) {};
    \node[shape=circle,fill=black] (E) at (2.4,0) {};
    \node[shape=circle,draw=black] (F) at (2.9,0) {} ;

     \node (G) at (0,-0.5) {$u_1$};
    \node (H) at (0.5,-0.5) {$u_1$};
    \node (I) at (1.2,-0.5) {$u_2$};
    \node (J) at (1.7,-0.5) {$u_2$};
    \node (K) at (2.4,-0.5) {$u_3$};
    \node (L) at (2.9,-0.5) {$u_3$} ;

\draw plot [smooth, tension=2] coordinates { (0,0.15) (0.6 ,0.4) (1.2,0.15)};
\draw plot [smooth, tension=2] coordinates { (1.7,0) (2.4,0)};
\draw plot [smooth, tension=2] coordinates { (1.2,0.15) (1.8,0.4) (2.4,0.15)};

\end{tikzpicture}

  \\ 
$C^1_9(x_1,x_2,\dots,x_6)$ & \hspace{0.5cm} $C^1_{10}(x_1,x_2,\dots,x_6)$ & \hspace{0.5cm} $C^1_{11}(x_1,x_2,\dots,x_6)$\\
 \\
   \begin{tikzpicture}
    \node[shape=circle,fill=black] (A) at (0,0) {};
    \node[shape=circle,fill=black] (B) at (0.5,0) {};
    \node[shape=circle,fill=black] (C) at (1.2,0) {};
    \node[shape=circle,fill=black] (D) at (1.7,0) {};
    \node[shape=circle,draw=black] (E) at (2.4,0) {};
    \node[shape=circle,draw=black] (F) at (2.9,0) {} ;

     \node (G) at (0,-0.5) {$u_1$};
    \node (H) at (0.5,-0.5) {$u_1$};
    \node (I) at (1.2,-0.5) {$u_2$};
    \node (J) at (1.7,-0.5) {$u_2$};
    \node (K) at (2.4,-0.5) {$u_3$};
    \node (L) at (2.9,-0.5) {$u_3$} ;

\draw plot [smooth, tension=2] coordinates { (0,0.15) (0.6 ,0.4) (1.2,0.15)};
\draw plot [smooth, tension=2] coordinates { (0.5,0) (1.2,0)};
\draw plot [smooth, tension=2] coordinates { (0.5,0.15) (1.1,0.4) (1.7,0.15)};

\end{tikzpicture}
  
 &
 
  \hspace{0.5cm}
  \begin{tikzpicture}
    \node[shape=circle,fill=black] (A) at (0,0) {};
    \node[shape=circle,fill=black] (B) at (0.5,0) {};
    \node[shape=circle,fill=black] (C) at (1.2,0) {};
    \node[shape=circle,fill=black] (D) at (1.7,0) {};
    \node[shape=circle,fill=black] (E) at (2.4,0) {};
    \node[shape=circle,fill=black] (F) at (2.9,0) {} ;
    
     \node (G) at (0,-0.5) {$u_1$};
    \node (H) at (0.5,-0.5) {$u_1$};
    \node (I) at (1.2,-0.5) {$u_2$};
    \node (J) at (1.7,-0.5) {$u_2$};
    \node (K) at (2.4,-0.5) {$u_3$};
    \node (L) at (2.9,-0.5) {$u_3$} ;
    
\draw plot [smooth, tension=2] coordinates { (0,0.15) (0.85 ,0.4) (1.7,0.15)};
\draw plot [smooth, tension=2] coordinates { (0,0.15) (1.45 ,0.6) (2.9,0.15)};
\draw plot [smooth, tension=2] coordinates { (2.4,0.15) (1.45,0.5) (0.5,0.15)};
\draw plot [smooth, tension=2] coordinates { (2.4,0.15) (1.8,0.4) (1.2,0.15)};

\end{tikzpicture}

 &

  \hspace{0.5cm}
\begin{tikzpicture}
    \node[shape=circle,fill=black] (A) at (0,0) {};
    \node[shape=circle,fill=black] (B) at (0.5,0) {};
    \node[shape=circle,fill=black] (C) at (1.2,0) {};
    \node[shape=circle,fill=black] (D) at (1.7,0) {};
    \node[shape=circle,fill=black] (E) at (2.4,0) {};
    \node[shape=circle,fill=black] (F) at (2.9,0) {} ;

     \node (G) at (0,-0.5) {$u_1$};
    \node (H) at (0.5,-0.5) {$u_1$};
    \node (I) at (1.2,-0.5) {$u_2$};
    \node (J) at (1.7,-0.5) {$u_2$};
    \node (K) at (2.4,-0.5) {$u_3$};
    \node (L) at (2.9,-0.5) {$u_3$} ;
    
\draw plot [smooth, tension=2] coordinates { (0,0.15) (0.6 ,0.4) (1.2,0.15)};
\draw plot [smooth, tension=2] coordinates { (0,0.15) (1.2 ,0.6) (2.4,0.15)};
\draw plot [smooth, tension=2] coordinates { (0.5,0.15) (1.7,0.6) (2.9,0.15)};
\draw plot [smooth, tension=2] coordinates { (0.5,0.15) (1.1,0.4) (1.7,0.15)};
\end{tikzpicture}

  \\ 
$C^1_{12}(x_1,x_2,\dots,x_6)$ & \hspace{0.5cm} $C^1_{13}(x_1,x_2,\dots,x_6)$ & \hspace{0.5cm} $C^1_{14}(x_1,x_2,\dots,x_6)$\\
 \\

\begin{tikzpicture}
    \node[shape=circle,fill=black] (A) at (0,0) {};
    \node[shape=circle,fill=black] (B) at (0.5,0) {};
    \node[shape=circle,fill=black] (C) at (1.2,0) {};
    \node[shape=circle,fill=black] (D) at (1.7,0) {};
    \node[shape=circle,fill=black] (E) at (2.4,0) {};
    \node[shape=circle,fill=black] (F) at (2.9,0) {} ;

     \node (G) at (0,-0.5) {$u_1$};
    \node (H) at (0.5,-0.5) {$u_1$};
    \node (I) at (1.2,-0.5) {$u_2$};
    \node (J) at (1.7,-0.5) {$u_2$};
    \node (K) at (2.4,-0.5) {$u_3$};
    \node (L) at (2.9,-0.5) {$u_3$} ;
    
\draw plot [smooth, tension=2] coordinates { (0,0.15) (1.45 ,0.6) (2.9,0.15)};
\draw plot [smooth, tension=2] coordinates { (0,0.15) (1.2 ,0.5) (2.4,0.15)};
\draw plot [smooth, tension=2] coordinates { (0.5,0) (1.2,0)};
\draw plot [smooth, tension=2] coordinates { (0.5,0.15) (1.1,0.4) (1.7,0.15)};
\end{tikzpicture}
 
  &
 
\hspace{0.5cm}
\begin{tikzpicture}
    \node[shape=circle,fill=black] (A) at (0,0) {};
    \node[shape=circle,fill=black] (B) at (0.5,0) {};
    \node[shape=circle,fill=black] (C) at (1.2,0) {};
    \node[shape=circle,fill=black] (D) at (1.7,0) {};
    \node[shape=circle,fill=black] (E) at (2.4,0) {};
    \node[shape=circle,fill=black] (F) at (2.9,0) {} ;

     \node (G) at (0,-0.5) {$u_1$};
    \node (H) at (0.5,-0.5) {$u_1$};
    \node (I) at (1.2,-0.5) {$u_2$};
    \node (J) at (1.7,-0.5) {$u_2$};
    \node (K) at (2.4,-0.5) {$u_3$};
    \node (L) at (2.9,-0.5) {$u_3$} ;
    
\draw plot [smooth, tension=2] coordinates { (0,0.15) (0.6 ,0.4) (1.2,0.15)};
\draw plot [smooth, tension=2] coordinates { (0.5,0.15) (1.45 ,0.6) (2.4,0.15)};
\draw plot [smooth, tension=2] coordinates { (1.2,0.15) (1.8,0.4) (2.4,0.15)};
\draw plot [smooth, tension=2] coordinates { (1.7,0.15) (2.3,0.4) (2.9,0.15)};
\end{tikzpicture}  

&

\hspace{0.5cm}
\begin{tikzpicture}
    \node[shape=circle,fill=black] (A) at (0,0) {};
    \node[shape=circle,fill=black] (B) at (0.5,0) {};
    \node[shape=circle,fill=black] (C) at (1.2,0) {};
    \node[shape=circle,fill=black] (D) at (1.7,0) {};
    \node[shape=circle,fill=black] (E) at (2.4,0) {};
    \node[shape=circle,fill=black] (F) at (2.9,0) {} ;

     \node (G) at (0,-0.5) {$u_1$};
    \node (H) at (0.5,-0.5) {$u_1$};
    \node (I) at (1.2,-0.5) {$u_2$};
    \node (J) at (1.7,-0.5) {$u_2$};
    \node (K) at (2.4,-0.5) {$u_3$};
    \node (L) at (2.9,-0.5) {$u_3$} ;
    
\draw plot [smooth, tension=2] coordinates { (0,0.15) (0.6 ,0.4) (1.2,0.15)};
\draw plot [smooth, tension=2] coordinates { (0.5,0.15) (1.45 ,0.6) (2.4,0.15)};
\draw plot [smooth, tension=2] coordinates { (1.2,0)  (0.5,0)};
\draw plot [smooth, tension=2] coordinates { (1.7,0.15) (2.3,0.4) (2.9,0.15)};
\end{tikzpicture}

 \\ 
$C^1_{15}(x_1,x_2,\dots,x_6)$ & \hspace{0.5cm} $C^1_{16}(x_1,x_2,\dots,x_6)$ & \hspace{0.5cm} $C^1_{17}(x_1,x_2,\dots,x_6)$\\
 \\
   
 \end{tabular}
 
 \begin{tabular}{ c c c }

  \begin{tikzpicture}
    \node[shape=circle,fill=black] (A) at (0,0) {};
    \node[shape=circle,fill=black] (B) at (0.5,0) {};
    \node[shape=circle,fill=black] (C) at (1.2,0) {};
    \node[shape=circle,fill=black] (D) at (1.7,0) {};
    \node[shape=circle,fill=black] (E) at (2.4,0) {};
    \node[shape=circle,draw=black] (F) at (2.9,0) {} ;
    
     \node (G) at (0,-0.5) {$u_1$};
    \node (H) at (0.5,-0.5) {$u_1$};
    \node (I) at (1.2,-0.5) {$u_2$};
    \node (J) at (1.7,-0.5) {$u_2$};
    \node (K) at (2.4,-0.5) {$u_3$};
    \node (L) at (2.9,-0.5) {$u_3$} ;
    
\draw plot [smooth, tension=2] coordinates { (0,0.15) (0.6 ,0.4) (1.2,0.15)};
\draw plot [smooth, tension=2] coordinates { (0.5,0.15) (1.1 ,0.5) (1.7,0.15)};
\draw plot [smooth, tension=2] coordinates { (0.5,0.15) (1.45 ,0.6) (2.4,0.15)};
\draw plot [smooth, tension=2] coordinates { (1.2,0.15) (1.8,0.4) (2.4,0.15)};
\end{tikzpicture}
  &
   \hspace{0.5cm}
    \begin{tikzpicture}
    \node[shape=circle,fill=black] (A) at (0,0) {};
    \node[shape=circle,fill=black] (B) at (0.5,0) {};
    \node[shape=circle,fill=black] (C) at (1.2,0) {};
    \node[shape=circle,fill=black] (D) at (1.7,0) {};
    \node[shape=circle,fill=black] (E) at (2.4,0) {};
    \node[shape=circle,draw=black] (F) at (2.9,0) {} ;
    
     \node (G) at (0,-0.5) {$u_1$};
    \node (H) at (0.5,-0.5) {$u_1$};
    \node (I) at (1.2,-0.5) {$u_2$};
    \node (J) at (1.7,-0.5) {$u_2$};
    \node (K) at (2.4,-0.5) {$u_3$};
    \node (L) at (2.9,-0.5) {$u_3$} ;
    
\draw plot [smooth, tension=2] coordinates { (0,0.15) (0.85 ,0.6) (1.7,0.15)};
\draw plot [smooth, tension=2] coordinates { (0.5,0)  (1.2,0)};
\draw plot [smooth, tension=2] coordinates { (1.7,0)  (2.4,0)};
\draw plot [smooth, tension=2] coordinates { (1.2,0.15) (1.8,0.4) (2.4,0.15)};
\end{tikzpicture}
  
  &
   \hspace{0.5cm}
  \begin{tikzpicture}
    \node[shape=circle,fill=black] (A) at (0,0) {};
    \node[shape=circle,fill=black] (B) at (0.5,0) {};
    \node[shape=circle,fill=black] (C) at (1.2,0) {};
    \node[shape=circle,fill=black] (D) at (1.7,0) {};
    \node[shape=circle,fill=black] (E) at (2.4,0) {};
    \node[shape=circle,draw=black] (F) at (2.9,0) {} ;
    
     \node (G) at (0,-0.5) {$u_1$};
    \node (H) at (0.5,-0.5) {$u_1$};
    \node (I) at (1.2,-0.5) {$u_2$};
    \node (J) at (1.7,-0.5) {$u_2$};
    \node (K) at (2.4,-0.5) {$u_3$};
    \node (L) at (2.9,-0.5) {$u_3$} ;
    
\draw plot [smooth, tension=2] coordinates { (0,0.15) (0.6 ,0.6) (1.2,0.15)};
\draw plot [smooth, tension=2] coordinates { (0.5,0)  (1.2,0)};
\draw plot [smooth, tension=2] coordinates { (1.7,0)  (2.4,0)};
\draw plot [smooth, tension=2] coordinates { (0.5,0.15) (1.45,0.4) (2.4,0.15)};
\end{tikzpicture}
  
   \\ 
$C^1_{18}(x_1,x_2,\dots,x_6)$ & \hspace{0.5cm} $C^1_{19}(x_1,x_2,\dots,x_6)$ & \hspace{0.5cm} $C^1_{20}(x_1,x_2,\dots,x_6)$\\
 \\

  \begin{tikzpicture}
    \node[shape=circle,fill=black] (A) at (0,0) {};
    \node[shape=circle,fill=black] (B) at (0.5,0) {};
    \node[shape=circle,fill=black] (C) at (1.2,0) {};
    \node[shape=circle,fill=black] (D) at (1.7,0) {};
    \node[shape=circle,fill=black] (E) at (2.4,0) {};
    \node[shape=circle,draw=black] (F) at (2.9,0) {} ;
    
     \node (G) at (0,-0.5) {$u_1$};
    \node (H) at (0.5,-0.5) {$u_1$};
    \node (I) at (1.2,-0.5) {$u_2$};
    \node (J) at (1.7,-0.5) {$u_2$};
    \node (K) at (2.4,-0.5) {$u_3$};
    \node (L) at (2.9,-0.5) {$u_3$} ;
    
\draw plot [smooth, tension=2] coordinates { (0,0.15) (0.6 ,0.4) (1.2,0.15)};
\draw plot [smooth, tension=2] coordinates { (0.5,0)  (1.2,0)};
\draw plot [smooth, tension=2] coordinates { (1.7,0)  (2.4,0)};
\draw plot [smooth, tension=2] coordinates { (0.5,0.15) (1.1,0.4) (1.7,0.15)};
\end{tikzpicture}

  & \hspace{0.5cm}
    \begin{tikzpicture}
    \node[shape=circle,fill=black] (A) at (0,0) {};
    \node[shape=circle,fill=black] (B) at (0.5,0) {};
    \node[shape=circle,fill=black] (C) at (1.2,0) {};
    \node[shape=circle,fill=black] (D) at (1.7,0) {};
    \node[shape=circle,draw=black] (E) at (2.4,0) {};
    \node[shape=circle,draw=black] (F) at (2.9,0) {} ;
    
     \node (G) at (0,-0.5) {$u_1$};
    \node (H) at (0.5,-0.5) {$u_1$};
    \node (I) at (1.2,-0.5) {$u_2$};
    \node (J) at (1.7,-0.5) {$u_2$};
    \node (K) at (2.4,-0.5) {$u_3$};
    \node (L) at (2.9,-0.5) {$u_3$} ;

\draw plot [smooth, tension=2] coordinates { (0,0.15) (0.6 ,0.4) (1.2,0.15)};
\draw plot [smooth, tension=2] coordinates { (0.5,0) (1.2,0)};
\draw plot [smooth, tension=2] coordinates { (0.5,0.15) (1.1,0.4) (1.7,0.15)};
\draw plot [smooth, tension=2] coordinates { (0,0.15) (0.85 ,0.6) (1.7,0.15)};
\end{tikzpicture}
  &
   \hspace{0.5cm}
     \begin{tikzpicture}
    \node[shape=circle,fill=black] (A) at (0,0) {};
    \node[shape=circle,draw=black] (B) at (0.5,0) {};
    \node[shape=circle,fill=black] (C) at (1.2,0) {};
    \node[shape=circle,fill=black] (D) at (1.7,0) {};
    \node[shape=circle,fill=black] (E) at (2.4,0) {};
    \node[shape=circle,draw=black] (F) at (2.9,0) {} ;
    
     \node (G) at (0,-0.5) {$u_1$};
    \node (H) at (0.5,-0.5) {$u_1$};
    \node (I) at (1.2,-0.5) {$u_2$};
    \node (J) at (1.7,-0.5) {$u_2$};
    \node (K) at (2.4,-0.5) {$u_3$};
    \node (L) at (2.9,-0.5) {$u_3$} ;

\draw plot [smooth, tension=2] coordinates { (0,0.15) (0.6 ,0.4) (1.2,0.15)};
\draw plot [smooth, tension=2] coordinates { (2.4,0) (1.7,0)};
\draw plot [smooth, tension=2] coordinates { (2.4,0.15) (1.8,0.4) (1.2,0.15)};
\draw plot [smooth, tension=2] coordinates { (0,0.15) (0.85 ,0.6) (1.7,0.15)};
\end{tikzpicture}

     \\ 
$C^1_{21}(x_1,x_2,\dots,x_6)$ & \hspace{0.5cm} $C^1_{22}(x_1,x_2,\dots,x_6)$ & \hspace{0.5cm} $C^1_{23}(x_1,x_2,\dots,x_6)$\\
 \\
 
\begin{tikzpicture}
    \node[shape=circle,fill=black] (A) at (0,0) {};
    \node[shape=circle,fill=black] (B) at (0.5,0) {};
    \node[shape=circle,fill=black] (C) at (1.2,0) {};
    \node[shape=circle,fill=black] (D) at (1.7,0) {};
    \node[shape=circle,fill=black] (E) at (2.4,0) {};
    \node[shape=circle,fill=black] (F) at (2.9,0) {} ;

     \node (G) at (0,-0.5) {$u_1$};
    \node (H) at (0.5,-0.5) {$u_1$};
    \node (I) at (1.2,-0.5) {$u_2$};
    \node (J) at (1.7,-0.5) {$u_2$};
    \node (K) at (2.4,-0.5) {$u_3$};
    \node (L) at (2.9,-0.5) {$u_3$} ;
    
\draw plot [smooth, tension=2] coordinates { (0,0.15) (0.85 ,0.6) (1.7,0.15)};
\draw plot [smooth, tension=2] coordinates { (1.7,0.15) (2.3 ,0.4) (2.9,0.15)};
\draw plot [smooth, tension=2] coordinates { (2.9,0.15) (1.7,0.6) (0.5,0.15)};
\draw plot [smooth, tension=2] coordinates { (0.5,0)  (1.2,0)};
\draw plot [smooth, tension=2] coordinates { (1.2,0.15) (1.8,0.4) (2.4,0.15)};
\end{tikzpicture}

&

  \hspace{0.5cm}
  \begin{tikzpicture}
    \node[shape=circle,fill=black] (A) at (0,0) {};
    \node[shape=circle,fill=black] (B) at (0.5,0) {};
    \node[shape=circle,fill=black] (C) at (1.2,0) {};
    \node[shape=circle,fill=black] (D) at (1.7,0) {};
    \node[shape=circle,fill=black] (E) at (2.4,0) {};
    \node[shape=circle,fill=black] (F) at (2.9,0) {} ;

     \node (G) at (0,-0.5) {$u_1$};
    \node (H) at (0.5,-0.5) {$u_1$};
    \node (I) at (1.2,-0.5) {$u_2$};
    \node (J) at (1.7,-0.5) {$u_2$};
    \node (K) at (2.4,-0.5) {$u_3$};
    \node (L) at (2.9,-0.5) {$u_3$} ;
    
\draw plot [smooth, tension=2] coordinates { (0,0.15) (1.45 ,0.6) (2.9,0.15)};
\draw plot [smooth, tension=2] coordinates { (0.5,0.15) (1.1 ,0.4) (1.7,0.15)};
\draw plot [smooth, tension=2] coordinates { (1.2,0.15) (1.8,0.4) (2.4,0.15)};
\draw plot [smooth, tension=2] coordinates { (1.2,0.15) (2.05,0.5) (2.9,0.15)};
\draw plot [smooth, tension=2] coordinates { (1.7,0) (2.4,0)};
\end{tikzpicture}
  & 
  
    \hspace{0.5cm}
  \begin{tikzpicture}
    \node[shape=circle,fill=black] (A) at (0,0) {};
    \node[shape=circle,fill=black] (B) at (0.5,0) {};
    \node[shape=circle,fill=black] (C) at (1.2,0) {};
    \node[shape=circle,fill=black] (D) at (1.7,0) {};
    \node[shape=circle,fill=black] (E) at (2.4,0) {};
    \node[shape=circle,fill=black] (F) at (2.9,0) {} ;

     \node (G) at (0,-0.5) {$u_1$};
    \node (H) at (0.5,-0.5) {$u_1$};
    \node (I) at (1.2,-0.5) {$u_2$};
    \node (J) at (1.7,-0.5) {$u_2$};
    \node (K) at (2.4,-0.5) {$u_3$};
    \node (L) at (2.9,-0.5) {$u_3$} ;
    
\draw plot [smooth, tension=2] coordinates { (0,0.15) (0.6 ,0.4) (1.2,0.15)};
\draw plot [smooth, tension=2] coordinates { (1.7,0.15) (2.3 ,0.4) (2.9,0.15)};
\draw plot [smooth, tension=2] coordinates { (0.5,0.15) (1.45,0.6) (2.4,0.15)};
\draw plot [smooth, tension=2] coordinates { (0.5,0) (1.2,0)};
\draw plot [smooth, tension=2] coordinates { (1.7,0) (2.4,0)};
\end{tikzpicture}

       \\ 
$C^1_{24}(x_1,x_2,\dots,x_6)$ & \hspace{0.5cm} $C^1_{25}(x_1,x_2,\dots,x_6)$ & \hspace{0.5cm} $C^1_{26}(x_1,x_2,\dots,x_6)$\\
 \\

  \begin{tikzpicture}
    \node[shape=circle,fill=black] (A) at (0,0) {};
    \node[shape=circle,fill=black] (B) at (0.5,0) {};
    \node[shape=circle,fill=black] (C) at (1.2,0) {};
    \node[shape=circle,fill=black] (D) at (1.7,0) {};
    \node[shape=circle,fill=black] (E) at (2.4,0) {};
    \node[shape=circle,fill=black] (F) at (2.9,0) {} ;

     \node (G) at (0,-0.5) {$u_1$};
    \node (H) at (0.5,-0.5) {$u_1$};
    \node (I) at (1.2,-0.5) {$u_2$};
    \node (J) at (1.7,-0.5) {$u_2$};
    \node (K) at (2.4,-0.5) {$u_3$};
    \node (L) at (2.9,-0.5) {$u_3$} ;
    
\draw plot [smooth, tension=2] coordinates { (0,0.15) (0.6 ,0.4) (1.2,0.15)};
\draw plot [smooth, tension=2] coordinates { (0,0.15) (1.2 ,0.6) (2.4,0.15)};
\draw plot [smooth, tension=2] coordinates { (0.5,0.15) (1.45,0.5) (2.4,0.15)};
\draw plot [smooth, tension=2] coordinates { (1.7,0.15) (2.3,0.4) (2.9,0.15)};
\draw plot [smooth, tension=2] coordinates { (0.5,0) (1.2,0)};
\end{tikzpicture}
 
 &
      \hspace{0.5cm}
  \begin{tikzpicture}
    \node[shape=circle,fill=black] (A) at (0,0) {};
    \node[shape=circle,fill=black] (B) at (0.5,0) {};
    \node[shape=circle,fill=black] (C) at (1.2,0) {};
    \node[shape=circle,fill=black] (D) at (1.7,0) {};
    \node[shape=circle,fill=black] (E) at (2.4,0) {};
    \node[shape=circle,draw=black] (F) at (2.9,0) {} ;

     \node (G) at (0,-0.5) {$u_1$};
    \node (H) at (0.5,-0.5) {$u_1$};
    \node (I) at (1.2,-0.5) {$u_2$};
    \node (J) at (1.7,-0.5) {$u_2$};
    \node (K) at (2.4,-0.5) {$u_3$};
    \node (L) at (2.9,-0.5) {$u_3$} ;
    
\draw plot [smooth, tension=2] coordinates { (0,0.15) (0.6 ,0.4) (1.2,0.15)};
\draw plot [smooth, tension=2] coordinates { (0,0.15) (1.2 ,0.6) (2.4,0.15)};
\draw plot [smooth, tension=2] coordinates { (0.5,0.15) (1.1,0.5) (1.7,0.15)};
\draw plot [smooth, tension=2] coordinates { (0.5,0) (1.2,0)};
\draw plot [smooth, tension=2] coordinates { (1.7,0) (2.4,0)};
\end{tikzpicture}
 
 & 
 \hspace{0.5cm}
\begin{tikzpicture}
    \node[shape=circle,fill=black] (A) at (0,0) {};
    \node[shape=circle,fill=black] (B) at (0.5,0) {};
    \node[shape=circle,fill=black] (C) at (1.2,0) {};
    \node[shape=circle,fill=black] (D) at (1.7,0) {};
    \node[shape=circle,fill=black] (E) at (2.4,0) {};
    \node[shape=circle,fill=black] (F) at (2.9,0) {} ;

     \node (G) at (0,-0.5) {$u_1$};
    \node (H) at (0.5,-0.5) {$u_1$};
    \node (I) at (1.2,-0.5) {$u_2$};
    \node (J) at (1.7,-0.5) {$u_2$};
    \node (K) at (2.4,-0.5) {$u_3$};
    \node (L) at (2.9,-0.5) {$u_3$} ;

 \draw plot [smooth, tension=2] coordinates { (0.5,0) (1.2,0) };
     \draw plot [smooth, tension=2] coordinates { (1.7,0) (2.3,0) };
     \draw plot [smooth, tension=2] coordinates { (0,0.15) (1.45,0.7) (2.9,0.15) };
     \draw plot [smooth, tension=2] coordinates { (0,0.15) (0.85,0.5) (1.7,0.15) };
     \draw plot [smooth, tension=2] coordinates { (1.2,0.15) (2.05,0.5) (2.9,0.15) };
      \draw plot [smooth, tension=2] coordinates { (0.5,0.15) (1.4,0.3) (2.3,0.15) };    
\end{tikzpicture}  
 \\ 
 $C^1_{27}(x_1,x_2,\dots,x_6)$ & \hspace{0.5cm} $C^1_{28}(x_1,x_2,\dots,x_6)$ & \hspace{0.5cm} $C^1_{29}(x_1,x_2,\dots,x_6)$\\
 \\
 
 \begin{tikzpicture}
    \node[shape=circle,fill=black] (A) at (0,0) {};
    \node[shape=circle,fill=black] (B) at (0.5,0) {};
    \node[shape=circle,fill=black] (C) at (1.2,0) {};
    \node[shape=circle,fill=black] (D) at (1.7,0) {};
    \node[shape=circle,fill=black] (E) at (2.4,0) {};
    \node[shape=circle,fill=black] (F) at (2.9,0) {} ;

     \node (G) at (0,-0.5) {$u_1$};
    \node (H) at (0.5,-0.5) {$u_1$};
    \node (I) at (1.2,-0.5) {$u_2$};
    \node (J) at (1.7,-0.5) {$u_2$};
    \node (K) at (2.4,-0.5) {$u_3$};
    \node (L) at (2.9,-0.5) {$u_3$} ;
    
\draw plot [smooth, tension=2] coordinates { (0,0.15) (1.45 ,0.7) (2.9,0.15)};
\draw plot [smooth, tension=2] coordinates { (0,0.15) (0.6 ,0.55) (1.2,0.15)};
\draw plot [smooth, tension=2] coordinates { (1.7,0.15) (2.3,0.55) (2.9,0.15)};
\draw plot [smooth, tension=2] coordinates { (0.5,0.15) (1.45,0.4) (2.4,0.15)};
\draw plot [smooth, tension=2] coordinates { (0.5,0) (1.2,0)};
\draw plot [smooth, tension=2] coordinates { (1.7,0) (2.4,0)};
\end{tikzpicture}
 
 & 
 
 &
 \\ 
 $C^1_{30}(x_1,x_2,\dots,x_6)$ & &
 \\
\end{tabular}
\end{center}

\medskip

One can prove that these are all the possible type of configurations, up to symmetries, when the second neighborhood of $v$ has $6$ vertices by exhaustive case analysis, in a similar way as we did for $\mathcal C_5$ in Appendix~\ref{app:conf}. The main difference is that here we allow for cycles of length $4$ in $G$. While this implies that all configurations in $\mathcal C_5$ are also in $\mathcal C_4$, the set $\mathcal C_4$ also contains other configurations that either induce a $C_4$ in $G[W]$ or in $G[W\cup\{u_i\}]$, for some $i\in \{1,2,3\}$. Examples of the former are $C^1_{22}(x_1,x_2,\dots,x_6)$ and $C^1_{23}(x_1,x_2,\dots,x_6)$, and examples of the latter are $C^1_4(x_1,x_2,\dots,x_6)$ and $C^1_{15}(x_1,x_2,\dots,x_6)$.

\newpage
\subsection{}
The following configurations are obtained if the second neighborhood  of a vertex has size five.

\begin{center}
\begin{tabular}{ c c c }

\begin{tikzpicture}
    \node[shape=circle,draw=black] (A) at (0,0) {};
    \node[shape=circle,draw=black] (B) at (0.6,0) {};
    \node[shape=circle,draw=black] (C) at (1.2,0) {};
    \node[shape=circle,draw=black] (D) at (2.3,0) {};
    \node[shape=circle,draw=black] (E) at (2.9,0) {} ;
    
    {\tiny
    \node (G) at (0,-0.5) {$u_1$};
    \node (H) at (0.6,-0.5) {$u_1\!,\!u_2$};
    \node (I) at (1.2,-0.5) {$u_2$};
    \node (J) at (2.3,-0.5) {$u_3$};
    \node (K) at (2.9,-0.5) {$u_3$};
    }
\end{tikzpicture}

 & 
   
  \hspace{0.5cm}
\begin{tikzpicture}
    \node[shape=circle,fill=black] (A) at (0,0) {};
    \node[shape=circle,draw=black] (B) at (0.6,0) {};
    \node[shape=circle,fill=black] (C) at (1.2,0) {};
    \node[shape=circle,draw=black] (D) at (2.3,0) {};
    \node[shape=circle,draw=black] (E) at (2.9,0) {} ;
    
    {\tiny
    \node (G) at (0,-0.5) {$u_1$};
    \node (H) at (0.6,-0.5) {$u_1\!,\!u_2$};
    \node (I) at (1.2,-0.5) {$u_2$};
    \node (J) at (2.3,-0.5) {$u_3$};
    \node (K) at (2.9,-0.5) {$u_3$};
    }

\draw plot [smooth, tension=2] coordinates { (0,0.15) (0.6,0.4) (1.2,0.15)};
\end{tikzpicture}

 &
   
  \hspace{0.5cm}
\begin{tikzpicture}
    \node[shape=circle,fill=black] (A) at (0,0) {};
    \node[shape=circle,draw=black] (B) at (0.6,0) {};
    \node[shape=circle,draw=black] (C) at (1.2,0) {};
    \node[shape=circle,fill=black] (D) at (2.3,0) {};
    \node[shape=circle,draw=black] (E) at (2.9,0) {} ;
    
    {\tiny
    \node (G) at (0,-0.5) {$u_1$};
    \node (H) at (0.6,-0.5) {$u_1\!,\!u_2$};
    \node (I) at (1.2,-0.5) {$u_2$};
    \node (J) at (2.3,-0.5) {$u_3$};
    \node (K) at (2.9,-0.5) {$u_3$};
    }

\draw plot [smooth, tension=2] coordinates { (0,0.15) (1.15,0.4) (2.3,0.15)};
\end{tikzpicture}

 \\ 
 $C^2_{0}(x_1,x_2,\dots,x_5)$ & \hspace{0.5cm} $C^2_{1}(x_1,x_2,\dots,x_5)$ & \hspace{0.5cm} $C^2_{2}(x_1,x_2,\dots,x_5)$\\
 \\

 \end{tabular}
 \begin{tabular}{ c c c }

\begin{tikzpicture}
    \node[shape=circle,draw=black] (A) at (0,0) {};
    \node[shape=circle,fill=black] (B) at (0.6,0) {};
    \node[shape=circle,draw=black] (C) at (1.2,0) {};
    \node[shape=circle,fill=black] (D) at (2.3,0) {};
    \node[shape=circle,draw=black] (E) at (2.9,0) {} ;
    
    {\tiny
    \node (G) at (0,-0.5) {$u_1$};
    \node (H) at (0.6,-0.5) {$u_1\!,\!u_2$};
    \node (I) at (1.2,-0.5) {$u_2$};
    \node (J) at (2.3,-0.5) {$u_3$};
    \node (K) at (2.9,-0.5) {$u_3$};
    }
    
\draw plot [smooth, tension=2] coordinates { (0.6,0.15) (1.45,0.4) (2.3,0.15)};
\end{tikzpicture}

 & 
   
  \hspace{0.5cm}
\begin{tikzpicture}
    \node[shape=circle,fill=black] (A) at (0,0) {};
    \node[shape=circle,draw=black] (B) at (0.6,0) {};
    \node[shape=circle,draw=black] (C) at (1.2,0) {};
    \node[shape=circle,fill=black] (D) at (2.3,0) {};
    \node[shape=circle,fill=black] (E) at (2.9,0) {} ;
    
    {\tiny
    \node (G) at (0,-0.5) {$u_1$};
    \node (H) at (0.6,-0.5) {$u_1\!,\!u_2$};
    \node (I) at (1.2,-0.5) {$u_2$};
    \node (J) at (2.3,-0.5) {$u_3$};
    \node (K) at (2.9,-0.5) {$u_3$};
    }

\draw plot [smooth, tension=2] coordinates { (0,0.15) (1.15,0.4) (2.3,0.15)};
\draw plot [smooth, tension=2] coordinates { (0,0.15) (1.45,0.6) (2.9,0.15)};
\end{tikzpicture}

 &
   
  \hspace{0.5cm}
\begin{tikzpicture}
    \node[shape=circle,fill=black] (A) at (0,0) {};
    \node[shape=circle,draw=black] (B) at (0.6,0) {};
    \node[shape=circle,fill=black] (C) at (1.2,0) {};
    \node[shape=circle,fill=black] (D) at (2.3,0) {};
    \node[shape=circle,draw=black] (E) at (2.9,0) {} ;
    
    {\tiny
    \node (G) at (0,-0.5) {$u_1$};
    \node (H) at (0.6,-0.5) {$u_1\!,\!u_2$};
    \node (I) at (1.2,-0.5) {$u_2$};
    \node (J) at (2.3,-0.5) {$u_3$};
    \node (K) at (2.9,-0.5) {$u_3$};
    }

\draw plot [smooth, tension=2] coordinates { (0,0.15) (1.15,0.6) (2.3,0.15)};
\draw plot [smooth, tension=2] coordinates { (0,0.15) (0.6,0.4) (1.2,0.15)};
\end{tikzpicture}

 \\ 
 $C^2_{3}(x_1,x_2,\dots,x_5)$ & \hspace{0.5cm} $C^2_{4}(x_1,x_2,\dots,x_5)$ & \hspace{0.5cm} $C^2_{5}(x_1,x_2,\dots,x_5)$\\
 \\

\begin{tikzpicture}
    \node[shape=circle,fill=black] (A) at (0,0) {};
    \node[shape=circle,fill=black] (B) at (0.6,0) {};
    \node[shape=circle,draw=black] (C) at (1.2,0) {};
    \node[shape=circle,fill=black] (D) at (2.3,0) {};
    \node[shape=circle,fill=black] (E) at (2.9,0) {} ;
    
    {\tiny
    \node (G) at (0,-0.5) {$u_1$};
    \node (H) at (0.6,-0.5) {$u_1\!,\!u_2$};
    \node (I) at (1.2,-0.5) {$u_2$};
    \node (J) at (2.3,-0.5) {$u_3$};
    \node (K) at (2.9,-0.5) {$u_3$};
    }

\draw plot [smooth, tension=2] coordinates { (0,0.15) (1.15,0.4) (2.3,0.15)};
\draw plot [smooth, tension=2] coordinates { (0.6,0.15) (1.75,0.6) (2.9,0.15)};
\end{tikzpicture}

 & 
   
  \hspace{0.5cm}
\begin{tikzpicture}
    \node[shape=circle,fill=black] (A) at (0,0) {};
    \node[shape=circle,draw=black] (B) at (0.6,0) {};
    \node[shape=circle,fill=black] (C) at (1.2,0) {};
    \node[shape=circle,fill=black] (D) at (2.3,0) {};
    \node[shape=circle,fill=black] (E) at (2.9,0) {} ;
    
    {\tiny
    \node (G) at (0,-0.5) {$u_1$};
    \node (H) at (0.6,-0.5) {$u_1\!,\!u_2$};
    \node (I) at (1.2,-0.5) {$u_2$};
    \node (J) at (2.3,-0.5) {$u_3$};
    \node (K) at (2.9,-0.5) {$u_3$};
    }

\draw plot [smooth, tension=2] coordinates { (0,0.15) (1.45,0.6) (2.9,0.15)};
\draw plot [smooth, tension=2] coordinates { (1.2,0)  (2.3,0)};
\end{tikzpicture}

 &
   
  \hspace{0.5cm}
\begin{tikzpicture}
    \node[shape=circle,draw=black] (A) at (0,0) {};
    \node[shape=circle,fill=black] (B) at (0.6,0) {};
    \node[shape=circle,fill=black] (C) at (1.2,0) {};
    \node[shape=circle,fill=black] (D) at (2.3,0) {};
    \node[shape=circle,fill=black] (E) at (2.9,0) {} ;
    
    {\tiny
    \node (G) at (0,-0.5) {$u_1$};
    \node (H) at (0.6,-0.5) {$u_1\!,\!u_2$};
    \node (I) at (1.2,-0.5) {$u_2$};
    \node (J) at (2.3,-0.5) {$u_3$};
    \node (K) at (2.9,-0.5) {$u_3$};
    }

\draw plot [smooth, tension=2] coordinates { (0.6,0.15) (1.45,0.6) (2.3,0.15)};
\draw plot [smooth, tension=2] coordinates { (1.2,0)  (2.3,0)};
\end{tikzpicture}

 \\ 
 $C^2_{6}(x_1,x_2,\dots,x_5)$ & \hspace{0.5cm} $C^2_{7}(x_1,x_2,\dots,x_5)$ & \hspace{0.5cm} $C^2_{8}(x_1,x_2,\dots,x_5)$\\
 \\

\begin{tikzpicture}
    \node[shape=circle,fill=black] (A) at (0,0) {};
    \node[shape=circle,draw=black] (B) at (0.6,0) {};
    \node[shape=circle,fill=black] (C) at (1.2,0) {};
    \node[shape=circle,fill=black] (D) at (2.3,0) {};
    \node[shape=circle,draw=black] (E) at (2.9,0) {} ;
    
    {\tiny
    \node (G) at (0,-0.5) {$u_1$};
    \node (H) at (0.6,-0.5) {$u_1\!,\!u_2$};
    \node (I) at (1.2,-0.5) {$u_2$};
    \node (J) at (2.3,-0.5) {$u_3$};
    \node (K) at (2.9,-0.5) {$u_3$};
    }

\draw plot [smooth, tension=2] coordinates { (0,0.15) (1.15,0.4) (2.3,0.15)};
\draw plot [smooth, tension=2] coordinates { (1.2,0)  (2.3,0)};
\end{tikzpicture}

 & 
   
  \hspace{0.5cm}
\begin{tikzpicture}
    \node[shape=circle,fill=black] (A) at (0,0) {};
    \node[shape=circle,fill=black] (B) at (0.6,0) {};
    \node[shape=circle,fill=black] (C) at (1.2,0) {};
    \node[shape=circle,fill=black] (D) at (2.3,0) {};
    \node[shape=circle,draw=black] (E) at (2.9,0) {} ;
    
    {\tiny
    \node (G) at (0,-0.5) {$u_1$};
    \node (H) at (0.6,-0.5) {$u_1\!,\!u_2$};
    \node (I) at (1.2,-0.5) {$u_2$};
    \node (J) at (2.3,-0.5) {$u_3$};
    \node (K) at (2.9,-0.5) {$u_3$};
    }

\draw plot [smooth, tension=2] coordinates { (0,0.15) (0.6,0.4) (1.2,0.15)};
\draw plot [smooth, tension=2] coordinates { (0.6,0) (1.45,0.4) (2.3,0)};
\end{tikzpicture}

 &
   
  \hspace{0.5cm}
\begin{tikzpicture}
    \node[shape=circle,fill=black] (A) at (0,0) {};
    \node[shape=circle,fill=black] (B) at (0.6,0) {};
    \node[shape=circle,fill=black] (C) at (1.2,0) {};
    \node[shape=circle,fill=black] (D) at (2.3,0) {};
    \node[shape=circle,fill=black] (E) at (2.9,0) {} ;
    
    {\tiny
    \node (G) at (0,-0.5) {$u_1$};
    \node (H) at (0.6,-0.5) {$u_1\!,\!u_2$};
    \node (I) at (1.2,-0.5) {$u_2$};
    \node (J) at (2.3,-0.5) {$u_3$};
    \node (K) at (2.9,-0.5) {$u_3$};
    }

\draw plot [smooth, tension=2] coordinates { (0,0.15) (1.45,0.6) (2.9,0.15)};
\draw plot [smooth, tension=2] coordinates { (0.6,0.15) (1.75,0.4) (2.9,0.15)};
\draw plot [smooth, tension=2] coordinates { (1.2,0)  (2.3,0)};
\end{tikzpicture}

 \\ 
 $C^2_{9}(x_1,x_2,\dots,x_5)$ & \hspace{0.5cm} $C^2_{10}(x_1,x_2,\dots,x_5)$ & \hspace{0.5cm} $C^2_{11}(x_1,x_2,\dots,x_5)$\\
 \\

 \end{tabular}
 
 \begin{tabular}{ c c c }

\begin{tikzpicture}
    \node[shape=circle,fill=black] (A) at (0,0) {};
    \node[shape=circle,fill=black] (B) at (0.6,0) {};
    \node[shape=circle,fill=black] (C) at (1.2,0) {};
    \node[shape=circle,fill=black] (D) at (2.3,0) {};
    \node[shape=circle,fill=black] (E) at (2.9,0) {} ;
    
    {\tiny
    \node (G) at (0,-0.5) {$u_1$};
    \node (H) at (0.6,-0.5) {$u_1\!,\!u_2$};
    \node (I) at (1.2,-0.5) {$u_2$};
    \node (J) at (2.3,-0.5) {$u_3$};
    \node (K) at (2.9,-0.5) {$u_3$};
    }

\draw plot [smooth, tension=2] coordinates { (0,0.15) (1.45,0.6) (2.9,0.15)};
\draw plot [smooth, tension=2] coordinates { (1.2,0.15) (2.05,0.5) (2.9,0.15)};
\draw plot [smooth, tension=2] coordinates { (0.6,0.15) (1.45,0.4) (2.3,0.15)};
\end{tikzpicture}

 & 
   
  \hspace{0.5cm}
\begin{tikzpicture}
    \node[shape=circle,fill=black] (A) at (0,0) {};
    \node[shape=circle,fill=black] (B) at (0.6,0) {};
    \node[shape=circle,fill=black] (C) at (1.2,0) {};
    \node[shape=circle,fill=black] (D) at (2.3,0) {};
    \node[shape=circle,fill=black] (E) at (2.9,0) {} ;
    
    {\tiny
    \node (G) at (0,-0.5) {$u_1$};
    \node (H) at (0.6,-0.5) {$u_1\!,\!u_2$};
    \node (I) at (1.2,-0.5) {$u_2$};
    \node (J) at (2.3,-0.5) {$u_3$};
    \node (K) at (2.9,-0.5) {$u_3$};
    }

\draw plot [smooth, tension=2] coordinates { (0,0.15) (0.6,0.4) (1.2,0.15)};
\draw plot [smooth, tension=2] coordinates { (0,0.15) (1.15,0.6) (2.3,0.15)};
\draw plot [smooth, tension=2] coordinates { (0.6,0.15) (1.75,0.5) (2.9,0.15)};
\end{tikzpicture}

 &
   
  \hspace{0.5cm}
\begin{tikzpicture}
    \node[shape=circle,fill=black] (A) at (0,0) {};
    \node[shape=circle,fill=black] (B) at (0.6,0) {};
    \node[shape=circle,draw=black] (C) at (1.2,0) {};
    \node[shape=circle,fill=black] (D) at (2.3,0) {};
    \node[shape=circle,fill=black] (E) at (2.9,0) {} ;
    
    {\tiny
    \node (G) at (0,-0.5) {$u_1$};
    \node (H) at (0.6,-0.5) {$u_1\!,\!u_2$};
    \node (I) at (1.2,-0.5) {$u_2$};
    \node (J) at (2.3,-0.5) {$u_3$};
    \node (K) at (2.9,-0.5) {$u_3$};
    }

\draw plot [smooth, tension=2] coordinates { (0,0.15) (1.15,0.6) (2.3,0.15)};
\draw plot [smooth, tension=2] coordinates { (0.6,0.15) (1.75,0.6) (2.9,0.15)};
\draw plot [smooth, tension=2] coordinates { (0.6,0.15) (1.45,0.4) (2.3,0.15)};
\end{tikzpicture}

 \\ 
 $C^2_{12}(x_1,x_2,\dots,x_5)$ & \hspace{0.5cm} $C^2_{13}(x_1,x_2,\dots,x_5)$ & \hspace{0.5cm} $C^2_{14}(x_1,x_2,\dots,x_5)$\\
 \\

\begin{tikzpicture}
    \node[shape=circle,fill=black] (A) at (0,0) {};
    \node[shape=circle,draw=black] (B) at (0.6,0) {};
    \node[shape=circle,fill=black] (C) at (1.2,0) {};
    \node[shape=circle,fill=black] (D) at (2.3,0) {};
    \node[shape=circle,fill=black] (E) at (2.9,0) {} ;
    
    {\tiny
    \node (G) at (0,-0.5) {$u_1$};
    \node (H) at (0.6,-0.5) {$u_1\!,\!u_2$};
    \node (I) at (1.2,-0.5) {$u_2$};
    \node (J) at (2.3,-0.5) {$u_3$};
    \node (K) at (2.9,-0.5) {$u_3$};
    }

\draw plot [smooth, tension=2] coordinates { (0,0.15) (1.15,0.6) (2.3,0.15)};
\draw plot [smooth, tension=2] coordinates { (1.2,0.15) (2.05,0.6) (2.9,0.15)};
\draw plot [smooth, tension=2] coordinates { (1.2,0)(2.3,0)};
\end{tikzpicture}

 & 
   
  \hspace{0.5cm}
\begin{tikzpicture}
    \node[shape=circle,fill=black] (A) at (0,0) {};
    \node[shape=circle,fill=black] (B) at (0.6,0) {};
    \node[shape=circle,draw=black] (C) at (1.2,0) {};
    \node[shape=circle,fill=black] (D) at (2.3,0) {};
    \node[shape=circle,fill=black] (E) at (2.9,0) {} ;
    
    {\tiny
    \node (G) at (0,-0.5) {$u_1$};
    \node (H) at (0.6,-0.5) {$u_1\!,\!u_2$};
    \node (I) at (1.2,-0.5) {$u_2$};
    \node (J) at (2.3,-0.5) {$u_3$};
    \node (K) at (2.9,-0.5) {$u_3$};
    }

\draw plot [smooth, tension=2] coordinates { (0.6,0.15) (1.45,0.4) (2.3,0.15)};
\draw plot [smooth, tension=2] coordinates { (0,0.15) (1.45,0.6) (2.9,0.15)};
\draw plot [smooth, tension=2] coordinates { (0,0.15) (1.15,0.5) (2.3,0.15)};
\end{tikzpicture}

 &
   
  \hspace{0.5cm}
\begin{tikzpicture}
    \node[shape=circle,fill=black] (A) at (0,0) {};
    \node[shape=circle,fill=black] (B) at (0.6,0) {};
    \node[shape=circle,fill=black] (C) at (1.2,0) {};
    \node[shape=circle,draw=black] (D) at (2.3,0) {};
    \node[shape=circle,fill=black] (E) at (2.9,0) {} ;
    
    {\tiny
    \node (G) at (0,-0.5) {$u_1$};
    \node (H) at (0.6,-0.5) {$u_1\!,\!u_2$};
    \node (I) at (1.2,-0.5) {$u_2$};
    \node (J) at (2.3,-0.5) {$u_3$};
    \node (K) at (2.9,-0.5) {$u_3$};
    }

\draw plot [smooth, tension=2] coordinates { (0,0.15) (1.45,0.6) (2.9,0.15)};
\draw plot [smooth, tension=2] coordinates { (0,0.15) (0.6,0.4) (1.2,0.15)};
\draw plot [smooth, tension=2] coordinates { (0.6,0.15) (1.75,0.4) (2.9,0.15)};
\end{tikzpicture}

 \\ 
 $C^2_{15}(x_1,x_2,\dots,x_5)$ & \hspace{0.5cm} $C^2_{16}(x_1,x_2,\dots,x_5)$ & \hspace{0.5cm} $C^2_{17}(x_1,x_2,\dots,x_5)$\\
 \\

\begin{tikzpicture}
    \node[shape=circle,fill=black] (A) at (0,0) {};
    \node[shape=circle,fill=black] (B) at (0.6,0) {};
    \node[shape=circle,fill=black] (C) at (1.2,0) {};
    \node[shape=circle,fill=black] (D) at (2.3,0) {};
    \node[shape=circle,fill=black] (E) at (2.9,0) {} ;
    
    {\tiny
    \node (G) at (0,-0.5) {$u_1$};
    \node (H) at (0.6,-0.5) {$u_1\!,\!u_2$};
    \node (I) at (1.2,-0.5) {$u_2$};
    \node (J) at (2.3,-0.5) {$u_3$};
    \node (K) at (2.9,-0.5) {$u_3$};
    }

\draw plot [smooth, tension=2] coordinates { (0,0.15) (0.6,0.4) (1.2,0.15)};
\draw plot [smooth, tension=2] coordinates { (1.2,0) (2.3,0)};
\draw plot [smooth, tension=2] coordinates { (0.6,0.15) (1.75,0.6) (2.9,0.15)};
\draw plot [smooth, tension=2] coordinates { (0.6,0.15) (1.45,0.4) (2.3,0.15)};
\end{tikzpicture}

 & 
   
  \hspace{0.5cm}
\begin{tikzpicture}
    \node[shape=circle,fill=black] (A) at (0,0) {};
    \node[shape=circle,fill=black] (B) at (0.6,0) {};
    \node[shape=circle,fill=black] (C) at (1.2,0) {};
    \node[shape=circle,fill=black] (D) at (2.3,0) {};
    \node[shape=circle,fill=black] (E) at (2.9,0) {} ;
    
    {\tiny
    \node (G) at (0,-0.5) {$u_1$};
    \node (H) at (0.6,-0.5) {$u_1\!,\!u_2$};
    \node (I) at (1.2,-0.5) {$u_2$};
    \node (J) at (2.3,-0.5) {$u_3$};
    \node (K) at (2.9,-0.5) {$u_3$};
    }

\draw plot [smooth, tension=2] coordinates { (1.2,0) (2.3,0)};
\draw plot [smooth, tension=2] coordinates { (0.6,0.15) (1.75,0.6) (2.9,0.15)};
\draw plot [smooth, tension=2] coordinates { (0,0.15) (1.15,0.6) (2.3,0.15)};
\draw plot [smooth, tension=2] coordinates { (1.2,0.15) (2.05,0.4) (2.9,0.15)};
\end{tikzpicture}

 &
   
  \hspace{0.5cm}
\begin{tikzpicture}
    \node[shape=circle,fill=black] (A) at (0,0) {};
    \node[shape=circle,fill=black] (B) at (0.6,0) {};
    \node[shape=circle,fill=black] (C) at (1.2,0) {};
    \node[shape=circle,fill=black] (D) at (2.3,0) {};
    \node[shape=circle,fill=black] (E) at (2.9,0) {} ;
    
    {\tiny
    \node (G) at (0,-0.5) {$u_1$};
    \node (H) at (0.6,-0.5) {$u_1\!,\!u_2$};
    \node (I) at (1.2,-0.5) {$u_2$};
    \node (J) at (2.3,-0.5) {$u_3$};
    \node (K) at (2.9,-0.5) {$u_3$};
    }
    
\draw plot [smooth, tension=2] coordinates { (0,0.15) (1.15,0.6) (2.3,0.15)};
\draw plot [smooth, tension=2] coordinates { (0.6,0.15) (1.75,0.5) (2.9,0.15)};
\draw plot [smooth, tension=2] coordinates { (0.6,0.15) (1.45,0.4) (2.3,0.15)};
\draw plot [smooth, tension=2] coordinates { (1.2,0.15) (2.05,0.3) (2.9,0.15)};
\end{tikzpicture}

 \\ 
 $C^2_{18}(x_1,x_2,\dots,x_5)$ & \hspace{0.5cm} $C^2_{19}(x_1,x_2,\dots,x_5)$ & \hspace{0.5cm} $C^2_{20}(x_1,x_2,\dots,x_5)$\\
 \\

\begin{tikzpicture}
    \node[shape=circle,fill=black] (A) at (0,0) {};
    \node[shape=circle,fill=black] (B) at (0.6,0) {};
    \node[shape=circle,fill=black] (C) at (1.2,0) {};
    \node[shape=circle,fill=black] (D) at (2.3,0) {};
    \node[shape=circle,fill=black] (E) at (2.9,0) {} ;
    
    {\tiny
    \node (G) at (0,-0.5) {$u_1$};
    \node (H) at (0.6,-0.5) {$u_1\!,\!u_2$};
    \node (I) at (1.2,-0.5) {$u_2$};
    \node (J) at (2.3,-0.5) {$u_3$};
    \node (K) at (2.9,-0.5) {$u_3$};
    }

\draw plot [smooth, tension=2] coordinates { (0,0.15) (1.45,0.6) (2.9,0.15)};
\draw plot [smooth, tension=2] coordinates { (0,0.15) (0.6,0.4) (1.2,0.15)};
\draw plot [smooth, tension=2] coordinates { (0.6,0.15) (1.45,0.5) (2.3,0.15)};
\draw plot [smooth, tension=2] coordinates { (1.2,0) (2.3,0)};
\end{tikzpicture}

 & 
   
  \hspace{0.5cm}
\begin{tikzpicture}
    \node[shape=circle,fill=black] (A) at (0,0) {};
    \node[shape=circle,fill=black] (B) at (0.6,0) {};
    \node[shape=circle,fill=black] (C) at (1.2,0) {};
    \node[shape=circle,fill=black] (D) at (2.3,0) {};
    \node[shape=circle,fill=black] (E) at (2.9,0) {} ;
    
    {\tiny
    \node (G) at (0,-0.5) {$u_1$};
    \node (H) at (0.6,-0.5) {$u_1\!,\!u_2$};
    \node (I) at (1.2,-0.5) {$u_2$};
    \node (J) at (2.3,-0.5) {$u_3$};
    \node (K) at (2.9,-0.5) {$u_3$};
    }

\draw plot [smooth, tension=2] coordinates { (0,0.15) (1.45,0.6) (2.9,0.15)};
\draw plot [smooth, tension=2] coordinates { (0,0.15) (0.6,0.4) (1.2,0.15)};
\draw plot [smooth, tension=2] coordinates { (0.6,0.15) (1.45,0.4) (2.3,0.15)};
\draw plot [smooth, tension=2] coordinates { (0.6,0.15) (1.75,0.5) (2.9,0.15)};
\draw plot [smooth, tension=2] coordinates { (1.2,0) (2.3,0)};

\end{tikzpicture}

 &

 \\ 
 $C^2_{21}(x_1,x_2,\dots,x_5)$ & \hspace{0.5cm} $C^2_{22}(x_1,x_2,\dots,x_5)$ & 
 \\

\end{tabular}
\end{center}

\medskip

To prove that these are the only configurations in $\mathcal C_4$ when the second neighborhood of $v$ has $5$ vertices one can proceed again by case analysis. Now, less connections are allowed within the second neighborhood but the set of symmetries among the vertices in $W$ is also smaller. For instance, there are three different type of configurations with one edge in $G[W]$: $C_1^2$, $C_2^2$ and $C_3^2$.

\subsection{}
The following configurations are obtained if the second neighborhood  of a vertex has size four. There are two possibilities for the edges between the first and the second neighborhood of $v$, corresponding to the types $C^{3}_j$ and $C^4_j$.

\begin{center}
\begin{tabular}{ c c c }

\begin{tikzpicture}
    \node[shape=circle,draw=black] (A) at (0,0) {};
    \node[shape=circle,draw=black] (B) at (1,0) {};
    \node[shape=circle,draw=black] (C) at (2,0) {};
    \node[shape=circle,draw=black] (D) at (3,0) {};
    
    {\tiny
    \node (G) at (0,-0.5) {$u_1$};
    \node (H) at (1,-0.5) {$u_1\!,\!u_2$};
    \node (I) at (2,-0.5) {$u_2\!,\!u_3$};
    \node (J) at (3,-0.5) {$u_3$};
    }
    
\end{tikzpicture}

 & 
   
  \hspace{0.5cm}
\begin{tikzpicture}
    \node[shape=circle,fill=black] (A) at (0,0) {};
    \node[shape=circle,draw=black] (B) at (1,0) {};
    \node[shape=circle,fill=black] (C) at (2,0) {};
    \node[shape=circle,draw=black] (D) at (3,0) {};
    
    {\tiny
    \node (G) at (0,-0.5) {$u_1$};
    \node (H) at (1,-0.5) {$u_1\!,\!u_2$};
    \node (I) at (2,-0.5) {$u_2\!,\!u_3$};
    \node (J) at (3,-0.5) {$u_3$};
    }

\draw plot [smooth, tension=2] coordinates { (0,0.15) (1,0.4) (2,0.15)};
\end{tikzpicture}

 &
   
  \hspace{0.5cm}
\begin{tikzpicture}
    \node[shape=circle,fill=black] (A) at (0,0) {};
    \node[shape=circle,draw=black] (B) at (1,0) {};
    \node[shape=circle,draw=black] (C) at (2,0) {};
    \node[shape=circle,fill=black] (D) at (3,0) {};
    
    {\tiny
    \node (G) at (0,-0.5) {$u_1$};
    \node (H) at (1,-0.5) {$u_1\!,\!u_2$};
    \node (I) at (2,-0.5) {$u_2\!,\!u_3$};
    \node (J) at (3,-0.5) {$u_3$};
    }

\draw plot [smooth, tension=2] coordinates { (0,0.15) (1.5,0.4) (3,0.15)};
\end{tikzpicture}

 \\ 
 $C^3_{0}(x_1,x_2,x_3,x_4)$ & \hspace{0.5cm} $C^3_{1}(x_1,x_2,x_3,x_4)$ & \hspace{0.5cm} $C^3_{2}(x_1,x_2,x_3,x_4)$\\
 \\

\begin{tikzpicture}
    \node[shape=circle,fill=black] (A) at (0,0) {};
    \node[shape=circle,draw=black] (B) at (1,0) {};
    \node[shape=circle,fill=black] (C) at (2,0) {};
    \node[shape=circle,fill=black] (D) at (3,0) {};
    
    {\tiny
    \node (G) at (0,-0.5) {$u_1$};
    \node (H) at (1,-0.5) {$u_1\!,\!u_2$};
    \node (I) at (2,-0.5) {$u_2\!,\!u_3$};
    \node (J) at (3,-0.5) {$u_3$};
    }
    
\draw plot [smooth, tension=2] coordinates { (0,0.15) (1,0.4) (2,0.15)};
\draw plot [smooth, tension=2] coordinates { (0,0.15) (1.5,0.6) (3,0.15)};
\end{tikzpicture}

 & 
   
  \hspace{0.5cm}
\begin{tikzpicture}
    \node[shape=circle,fill=black] (A) at (0,0) {};
    \node[shape=circle,fill=black] (B) at (1,0) {};
    \node[shape=circle,fill=black] (C) at (2,0) {};
    \node[shape=circle,fill=black] (D) at (3,0) {};
    
    {\tiny
    \node (G) at (0,-0.5) {$u_1$};
    \node (H) at (1,-0.5) {$u_1\!,\!u_2$};
    \node (I) at (2,-0.5) {$u_2\!,\!u_3$};
    \node (J) at (3,-0.5) {$u_3$};
    }

\draw plot [smooth, tension=2] coordinates { (0,0.15) (1,0.4) (2,0.15)};
\draw plot [smooth, tension=2] coordinates { (1,0.15) (2,0.4) (3,0.15)};
\end{tikzpicture}

 &
   
  \hspace{0.5cm}
\begin{tikzpicture}
    \node[shape=circle,fill=black] (A) at (0,0) {};
    \node[shape=circle,fill=black] (B) at (1,0) {};
    \node[shape=circle,fill=black] (C) at (2,0) {};
    \node[shape=circle,fill=black] (D) at (3,0) {};
    
    {\tiny
    \node (G) at (0,-0.5) {$u_1$};
    \node (H) at (1,-0.5) {$u_1\!,\!u_2$};
    \node (I) at (2,-0.5) {$u_2\!,\!u_3$};
    \node (J) at (3,-0.5) {$u_3$};
    }

\draw plot [smooth, tension=2] coordinates { (0,0.15) (1,0.4) (2,0.15)};
\draw plot [smooth, tension=2] coordinates { (1,0.15) (2,0.4) (3,0.15)};
\draw plot [smooth, tension=2] coordinates { (0,0.15) (1.5,0.6) (3,0.15)};
\end{tikzpicture}

 \\ 
 $C^3_{3}(x_1,x_2,x_3,x_4)$ & \hspace{0.5cm} $C^3_{4}(x_1,x_2,x_3,x_4)$ & \hspace{0.5cm} $C^3_{5}(x_1,x_2,x_3,x_4)$\\
 \\
 
\end{tabular}
\end{center}

\begin{center}
\begin{tabular}{ c c c }

\begin{tikzpicture}
    \node[shape=circle,draw=black] (A) at (0,0) {};
    \node[shape=circle,draw=black] (B) at (1,0) {};
    \node[shape=circle,draw=black] (C) at (2,0) {};
    \node[shape=circle,draw=black] (D) at (3,0) {};
    
    {\tiny
    \node (G) at (0,-0.5) {$u_1$};
    \node (H) at (1,-0.5) {$u_1\!,\!u_2,\!u_3$};
    \node (I) at (2,-0.5) {$u_2$};
    \node (J) at (3,-0.5) {$u_3$};
    }
    
\end{tikzpicture}

 & 
   
  \hspace{0.5cm}
\begin{tikzpicture}
    \node[shape=circle,fill=black] (A) at (0,0) {};
    \node[shape=circle,draw=black] (B) at (1,0) {};
    \node[shape=circle,fill=black] (C) at (2,0) {};
    \node[shape=circle,draw=black] (D) at (3,0) {};
    
        {\tiny
    \node (G) at (0,-0.5) {$u_1$};
    \node (H) at (1,-0.5) {$u_1\!,\!u_2,\!u_3$};
    \node (I) at (2,-0.5) {$u_2$};
    \node (J) at (3,-0.5) {$u_3$};
    }

\draw plot [smooth, tension=2] coordinates { (0,0.15) (1,0.4) (2,0.15)};
\end{tikzpicture}

 &
   
  \hspace{0.5cm}
\begin{tikzpicture}
    \node[shape=circle,fill=black] (A) at (0,0) {};
    \node[shape=circle,draw=black] (B) at (1,0) {};
    \node[shape=circle,fill=black] (C) at (2,0) {};
    \node[shape=circle,fill=black] (D) at (3,0) {};
    
        {\tiny
    \node (G) at (0,-0.5) {$u_1$};
    \node (H) at (1,-0.5) {$u_1\!,\!u_2,\!u_3$};
    \node (I) at (2,-0.5) {$u_2$};
    \node (J) at (3,-0.5) {$u_3$};
    }

\draw plot [smooth, tension=2] coordinates { (0,0.15) (1,0.4) (2,0.15)};
\draw plot [smooth, tension=2] coordinates { (0,0.15) (1.5,0.6) (3,0.15)};
\end{tikzpicture}

 \\ 
 $C^4_{0}(x_1,x_2,x_3,x_4)$ & \hspace{0.5cm} $C^4_{1}(x_1,x_2,x_3,x_4)$ & \hspace{0.5cm} $C^4_{2}(x_1,x_2,x_3,x_4)$
 \\

\end{tabular}
\end{center}

\subsection{}
The following configurations are obtained if the second neighborhood  of a vertex has size three. There are two possibilities for the edges between the first and the second neighborhood of $v$, corresponding to the types $C^{5}_j$ and $C^6_j$.

\begin{center}
\begin{tabular}{ c c c }

\begin{tikzpicture}
    \node[shape=circle,draw=black] (A) at (0,0) {};
    \node[shape=circle,draw=black] (B) at (1.45,0) {};
    \node[shape=circle,draw=black] (C) at (2.9,0) {};
    
    {\tiny
    \node (G) at (0,-0.5) {$u_1\!,\! u_2$};
    \node (H) at (1.45,-0.5) {$u_1\!,\!u_3$};
    \node (I) at (2.9,-0.5) {$u_2\!,\!u_3$};
    }
    
\end{tikzpicture}

&
    
  \hspace{0.5cm}
\begin{tikzpicture}
    \node[shape=circle,draw=black] (A) at (0,0) {};
    \node[shape=circle,draw=black] (B) at (1.45,0) {};
    \node[shape=circle,draw=black] (C) at (2.9,0) {};
    
    {\tiny
    \node (G) at (0,-0.5) {$u_1$};
    \node (H) at (1.45,-0.5) {$u_1\!,\!u_2,\!,\!u_3$};
    \node (I) at (2.9,-0.5) {$u_2\!,\!u_3$};
    }
    
\end{tikzpicture}

 & 
   
  \hspace{0.5cm}
\begin{tikzpicture}
    \node[shape=circle,fill=black] (A) at (0,0) {};
    \node[shape=circle,draw=black] (B) at (1.45,0) {};
    \node[shape=circle,fill=black] (C) at (2.9,0) {};
    
    {\tiny
    \node (G) at (0,-0.5) {$u_1$};
    \node (H) at (1.45,-0.5) {$u_1\!,\!u_2,\!,\!u_3$};
    \node (I) at (2.9,-0.5) {$u_2\!,\!u_3$};
    }

\draw plot [smooth, tension=2] coordinates { (0,0.15) (1.45,0.4) (2.9,0.15)};
\end{tikzpicture}

 \\ 
 $C^5_{0}(x_1,x_2,x_3)$ &  \hspace{0.5cm}$C^6_{0}(x_1,x_2,x_3)$ &  \hspace{0.5cm}$C^6_{1}(x_1,x_2,x_3)$ 
 \\

\end{tabular}
\end{center}

\subsection{}
The following configurations are obtained if the second neighborhood  of a vertex has size 2. In this case, the vertex belongs to a copy of $K_{3,3}$.
\medskip

\begin{center}
\begin{tabular}{ c c c }
\begin{tikzpicture}
    \node[shape=circle,draw=black] (A) at (0,0) {};
    \node[shape=circle,draw=black] (C) at (2.9,0) {};
    
    {\tiny
    \node (H) at (0,-0.5) {$u_1\!,\!u_2,\!u_3$};
    \node (H) at (2.9,-0.5) {$u_1\!,\!u_2,\!u_3$};
    }
    
\end{tikzpicture}

 & 
 
 &
 \\ 
 $C^7_{0}(x_1,x_2)$ & & 
 \\

\end{tabular}
\end{center}

\newpage
\section{Proof that unions of $P_{5,2}$ are the only graphs that minimize the occupancy fraction for every $\lambda\in (0,1]$.}
\label{sec:Unqiue52}

In this appendix we prove that unions of  $P_{5,2}$ are the only graphs that attain the minimum in Theorem~\ref{thm:d3g4}. As in the proof of the Theorem~\ref{thm:d3g4}, we will split the proof into $4$ cases. For each case, there is an assignment of $\Lambda_0^*(\lambda)$, $\Lambda_1^*(\lambda)$ and $\Lambda_2^*(\lambda)$ such that $\text{SLACK}_{\mathrm{min}}(\lambda,\Lambda_0^*(\lambda),\Lambda_2^*(\lambda),\Lambda_2^*(\lambda),C)$ defined as in~\eqref{eq:g4slack} is non-negative for every $C\in\mathcal C_4$ and is $0$ for a subset of configurations $\hat C$ that include $C^1_{29}(1,1,1,1,1,1)$, corresponding to the Petersen graph. We need to show that the only solutions induced by graphs and supported in $\hat C$ are, in fact, only supported in $C^1_{29}(1,1,1,1,1,1)$. It follows that unions of $P_{5,2}$ are the only graphs that attain the minimum.
\medskip

For $\lambda\in (0,3/16]$ and $\Lambda_1^*(\lambda), \Lambda_2^*(\lambda)$ as in Claim~\ref{cla:31}, there are $3$ configurations $C$ for which $\text{SLACK}_{\mathrm{min}}(\lambda,0, \Lambda_1^*(\lambda), \Lambda_2^*(\lambda),C)=0$:
\begin{center}
\begin{tabular}{ c c c }

 \begin{tikzpicture}
    \node[shape=circle,fill=black] (A) at (0,0) {};
    \node[shape=circle,fill=black] (B) at (0.5,0) {};
    \node[shape=circle,fill=black] (C) at (1.2,0) {};
    \node[shape=circle,fill=black] (D) at (1.7,0) {};
    \node[shape=circle,fill=black] (E) at (2.4,0) {};
    \node[shape=circle,draw=black] (F) at (2.9,0) {} ;

     \node (G) at (0,-0.5) {$u_1$};
    \node (H) at (0.5,-0.5) {$u_1$};
    \node (I) at (1.2,-0.5) {$u_2$};
    \node (J) at (1.7,-0.5) {$u_2$};
    \node (K) at (2.4,-0.5) {$u_3$};
    \node (L) at (2.9,-0.5) {$u_3$} ;
    
\draw plot [smooth, tension=2] coordinates { (0,0.15) (0.6 ,0.4) (1.2,0.15)};
\draw plot [smooth, tension=2] coordinates { (0,0.15) (1.2 ,0.6) (2.4,0.15)};
\draw plot [smooth, tension=2] coordinates { (0.5,0.15) (1.1,0.5) (1.7,0.15)};
\draw plot [smooth, tension=2] coordinates { (0.5,0) (1.2,0)};
\draw plot [smooth, tension=2] coordinates { (1.7,0) (2.4,0)};
\end{tikzpicture}
 &
\hspace{0.5cm}

\begin{tikzpicture}
    \node[shape=circle,fill=black] (A) at (0,0) {};
    \node[shape=circle,fill=black] (B) at (0.5,0) {};
    \node[shape=circle,fill=black] (C) at (1.2,0) {};
    \node[shape=circle,fill=black] (D) at (1.7,0) {};
    \node[shape=circle,fill=black] (E) at (2.4,0) {};
    \node[shape=circle,fill=black] (F) at (2.9,0) {} ;

     \node (G) at (0,-0.5) {$u_1$};
    \node (H) at (0.5,-0.5) {$u_1$};
    \node (I) at (1.2,-0.5) {$u_2$};
    \node (J) at (1.7,-0.5) {$u_2$};
    \node (K) at (2.4,-0.5) {$u_3$};
    \node (L) at (2.9,-0.5) {$u_3$} ;

 \draw plot [smooth, tension=2] coordinates { (0.5,0) (1.2,0) };
     \draw plot [smooth, tension=2] coordinates { (1.7,0) (2.3,0) };
     \draw plot [smooth, tension=2] coordinates { (0,0.15) (1.45,0.7) (2.9,0.15) };
     \draw plot [smooth, tension=2] coordinates { (0,0.15) (0.85,0.5) (1.7,0.15) };
     \draw plot [smooth, tension=2] coordinates { (1.2,0.15) (2.05,0.5) (2.9,0.15) };
      \draw plot [smooth, tension=2] coordinates { (0.5,0.15) (1.4,0.3) (2.3,0.15) };    
\end{tikzpicture}  
&
\hspace{0.5cm}

 \begin{tikzpicture}
    \node[shape=circle,fill=black] (A) at (0,0) {};
    \node[shape=circle,fill=black] (B) at (0.5,0) {};
    \node[shape=circle,fill=black] (C) at (1.2,0) {};
    \node[shape=circle,fill=black] (D) at (1.7,0) {};
    \node[shape=circle,fill=black] (E) at (2.4,0) {};
    \node[shape=circle,fill=black] (F) at (2.9,0) {} ;

     \node (G) at (0,-0.5) {$u_1$};
    \node (H) at (0.5,-0.5) {$u_1$};
    \node (I) at (1.2,-0.5) {$u_2$};
    \node (J) at (1.7,-0.5) {$u_2$};
    \node (K) at (2.4,-0.5) {$u_3$};
    \node (L) at (2.9,-0.5) {$u_3$} ;
    
\draw plot [smooth, tension=2] coordinates { (0,0.15) (1.45 ,0.7) (2.9,0.15)};
\draw plot [smooth, tension=2] coordinates { (0,0.15) (0.6 ,0.55) (1.2,0.15)};
\draw plot [smooth, tension=2] coordinates { (1.7,0.15) (2.3,0.55) (2.9,0.15)};
\draw plot [smooth, tension=2] coordinates { (0.5,0.15) (1.45,0.4) (2.4,0.15)};
\draw plot [smooth, tension=2] coordinates { (0.5,0) (1.2,0)};
\draw plot [smooth, tension=2] coordinates { (1.7,0) (2.4,0)};
\end{tikzpicture}
  \\  
   $C^1_{28}(1,1,1,1,1,0)$
  & 
\hspace{0.5cm}

  $C^1_{29}(1,1,1,1,1,1)$
  &
\hspace{0.5cm}

 $C^1_{30}(1,1,1,1,1,1)$
 \\
\end{tabular}
\end{center}

To prove that $P_{5,2}$ is the unique minimizer in this range, we need to show that a graph $G$ attaining the minimum is supported  neither in $C_{28}^1(1,1,1,1,1,0)$ nor in $C_{30}^1(1,1,1,1,1,1)$. Choose $I\in \mathcal I(G)$ according to the hard-core model and $v\in V(G)$ uniformly at random, and let $C$ be the random configuration obtained looking at the second neighborhood of $v$. Suppose first that $\Pr(C=C_{28}^1(1,1,1,1,1,0))>0$, then there exists a vertex whose second neighborhood has type $C_{28}^1$. If we select $I=\emptyset$, then this implies $\Pr(C=C_{28}^1(1,1,1,1,1,1))>0$, obtaining a contradiction. Suppose now that $\Pr(C=C_{30}^1(1,1,1,1,1,1))>0$, then there exists a vertex $v$ whose second neighborhood has type $C_{30}^1$. Note that $u_1$ is contained in a cycle of length $4$, thus, its second neighbourhood has type $C_j^i$ for some $i\neq 1$, obtaining a contradiction since all these types appear with probability $0$. We conclude that for every value of $\lambda\in (0, 3/16]$, any graph $G$ that attains the minimum is only supported in $C_{29}^1(1,1,1,1,1,1)$. In other words, $G$ is a union of $P_{5,2}$s.
\medskip


For $\lambda\in [3/16,11/20]$ and $\Lambda_1^*(\lambda), \Lambda_2^*(\lambda)$ as in Claim~\ref{cla:32}, there are $4$ configurations $C$ for which $\text{SLACK}_{\mathrm{min}}(\lambda,0, \Lambda_1^*(\lambda), \Lambda_2^*(\lambda),C)=0$:
\begin{center}
\begin{tabular}{ c c c}

\begin{tikzpicture}
    \node[shape=circle,fill=black] (A) at (0,0) {};
    \node[shape=circle,fill=black] (B) at (0.5,0) {};
    \node[shape=circle,fill=black] (C) at (1.2,0) {};
    \node[shape=circle,draw=black] (D) at (1.7,0) {};
    \node[shape=circle,fill=black] (E) at (2.4,0) {};
    \node[shape=circle,draw=black] (F) at (2.9,0) {} ;

     \node (G) at (0,-0.5) {$u_1$};
    \node (H) at (0.5,-0.5) {$u_1$};
    \node (I) at (1.2,-0.5) {$u_2$};
    \node (J) at (1.7,-0.5) {$u_2$};
    \node (K) at (2.4,-0.5) {$u_3$};
    \node (L) at (2.9,-0.5) {$u_3$} ;
    
\draw plot [smooth, tension=2] coordinates { (0,0.15) (1.2 ,0.6) (2.4,0.15)};
\draw plot [smooth, tension=2] coordinates { (0.5,0)(1.2,0)};
\end{tikzpicture}
&
\hspace{0.5cm}

 \begin{tikzpicture}
 
    \node[shape=circle,fill=black] (A) at (0,0) {};
    \node[shape=circle,fill=black] (B) at (0.5,0) {};
    \node[shape=circle,fill=black] (C) at (1.2,0) {};
    \node[shape=circle,fill=black] (D) at (1.7,0) {};
    \node[shape=circle,fill=black] (E) at (2.4,0) {};
    \node[shape=circle,draw=black] (F) at (2.9,0) {} ;

     \node (G) at (0,-0.5) {$u_1$};
    \node (H) at (0.5,-0.5) {$u_1$};
    \node (I) at (1.2,-0.5) {$u_2$};
    \node (J) at (1.7,-0.5) {$u_2$};
    \node (K) at (2.4,-0.5) {$u_3$};
    \node (L) at (2.9,-0.5) {$u_3$} ;
    
\draw plot [smooth, tension=2] coordinates { (0,0.15) (0.6 ,0.4) (1.2,0.15)};
\draw plot [smooth, tension=2] coordinates { (0,0.15) (1.2 ,0.6) (2.4,0.15)};
\draw plot [smooth, tension=2] coordinates { (0.5,0.15) (1.1,0.5) (1.7,0.15)};
\draw plot [smooth, tension=2] coordinates { (0.5,0) (1.2,0)};
\draw plot [smooth, tension=2] coordinates { (1.7,0) (2.4,0)};
\end{tikzpicture}
 &
 \hspace{0.5cm}

\begin{tikzpicture}
    \node[shape=circle,fill=black] (A) at (0,0) {};
    \node[shape=circle,fill=black] (B) at (0.5,0) {};
    \node[shape=circle,fill=black] (C) at (1.2,0) {};
    \node[shape=circle,fill=black] (D) at (1.7,0) {};
    \node[shape=circle,fill=black] (E) at (2.4,0) {};
    \node[shape=circle,fill=black] (F) at (2.9,0) {} ;

     \node (G) at (0,-0.5) {$u_1$};
    \node (H) at (0.5,-0.5) {$u_1$};
    \node (I) at (1.2,-0.5) {$u_2$};
    \node (J) at (1.7,-0.5) {$u_2$};
    \node (K) at (2.4,-0.5) {$u_3$};
    \node (L) at (2.9,-0.5) {$u_3$} ;

 \draw plot [smooth, tension=2] coordinates { (0.5,0) (1.2,0) };
     \draw plot [smooth, tension=2] coordinates { (1.7,0) (2.3,0) };
     \draw plot [smooth, tension=2] coordinates { (0,0.15) (1.45,0.7) (2.9,0.15) };
     \draw plot [smooth, tension=2] coordinates { (0,0.15) (0.85,0.5) (1.7,0.15) };
     \draw plot [smooth, tension=2] coordinates { (1.2,0.15) (2.05,0.5) (2.9,0.15) };
      \draw plot [smooth, tension=2] coordinates { (0.5,0.15) (1.4,0.3) (2.3,0.15) };    
\end{tikzpicture}  
\\
     $C^1_{3}(1,1,1,0,1,0)$
     
&
\hspace{0.5cm}

   $C^1_{28}(1,1,1,1,1,0)$
  &
  \hspace{0.5cm}
 
  $C^1_{29}(1,1,1,1,1,1)$
 \\
\\
\\
\begin{tikzpicture}
    \node[shape=circle,fill=black] (A) at (0,0) {};
    \node[shape=circle,draw=black] (B) at (0.6,0) {};
    \node[shape=circle,fill=black] (C) at (1.2,0) {};
    \node[shape=circle,fill=black] (D) at (2.3,0) {};
    \node[shape=circle,fill=black] (E) at (2.9,0) {} ;
    
    {\tiny
    \node (G) at (0,-0.5) {$u_1$};
    \node (H) at (0.6,-0.5) {$u_1\!,\!u_2$};
    \node (I) at (1.2,-0.5) {$u_2$};
    \node (J) at (2.3,-0.5) {$u_3$};
    \node (K) at (2.9,-0.5) {$u_3$};
    }

\draw plot [smooth, tension=2] coordinates { (0,0.15) (1.45,0.6) (2.9,0.15)};
\draw plot [smooth, tension=2] coordinates { (1.2,0)  (2.3,0)};
\end{tikzpicture}
&&
\\

 $C^2_{7}(1,0,1,1,1)$ & &

  \\  
\end{tabular}
\end{center}

To prove that $P_{5,2}$ is the unique minimizer in this range, as before, just observe that if any configuration $C^{i}_j(x_1,x_2,\dots,x_s)$ has positive probability to appear, then $C^{i}_{j'}(1,1,\dots,1)$ must also have positive probability to appear, for some $j'$ such that $C^{i}_j$ is \emph{isomorphic} to an \emph{induced subgraph} of $C^{i}_{j'}$. This is not the case for the configurations  $C^1_{3}(1,1,1,1,0,1,0)$, $C^1_{28}(1,1,1,1,1,1,0)$  and $C^2_{7}(1,0,1,1,1)$.   Therefore, any graph attaining the minimum is a union of $P_{5,2}$.
\medskip


For $\lambda\in [11/20,\sqrt{3/5}]$ and $\Lambda_1^*(\lambda), \Lambda_2^*(\lambda)$ as in Claim~\ref{cla:33}, there are $6$ configurations $C$ for which $\text{SLACK}_{\mathrm{min}}(\lambda,0, \Lambda_1^*(\lambda), \Lambda_2^*(\lambda),C)=0$:
\begin{center}
\begin{tabular}{ c c c}

 \begin{tikzpicture}
    \node[shape=circle,fill=black] (A) at (0,0) {};
    \node[shape=circle,draw=black] (B) at (0.5,0) {};
    \node[shape=circle,fill=black] (C) at (1.2,0) {};
    \node[shape=circle,draw=black] (D) at (1.7,0) {};
    \node[shape=circle,fill=black] (E) at (2.4,0) {};
    \node[shape=circle,draw=black] (F) at (2.9,0) {} ;
    
    \node (G) at (0,-0.5) {$u_1$};
    \node (H) at (0.5,-0.5) {$u_1$};
    \node (I) at (1.2,-0.5) {$u_2$};
    \node (J) at (1.7,-0.5) {$u_2$};
    \node (K) at (2.4,-0.5) {$u_3$};
    \node (L) at (2.9,-0.5) {$u_3$} ;
\end{tikzpicture}
&
\hspace{0.5cm}

\begin{tikzpicture}
    \node[shape=circle,fill=black] (A) at (0,0) {};
    \node[shape=circle,fill=black] (B) at (0.5,0) {};
    \node[shape=circle,fill=black] (C) at (1.2,0) {};
    \node[shape=circle,draw=black] (D) at (1.7,0) {};
    \node[shape=circle,fill=black] (E) at (2.4,0) {};
    \node[shape=circle,draw=black] (F) at (2.9,0) {} ;

     \node (G) at (0,-0.5) {$u_1$};
    \node (H) at (0.5,-0.5) {$u_1$};
    \node (I) at (1.2,-0.5) {$u_2$};
    \node (J) at (1.7,-0.5) {$u_2$};
    \node (K) at (2.4,-0.5) {$u_3$};
    \node (L) at (2.9,-0.5) {$u_3$} ;
    
\draw plot [smooth, tension=2] coordinates { (0,0.15) (1.2 ,0.6) (2.4,0.15)};
\draw plot [smooth, tension=2] coordinates { (0.5,0)(1.2,0)};
\end{tikzpicture}
&
\hspace{0.5cm}

\begin{tikzpicture}
    \node[shape=circle,fill=black] (A) at (0,0) {};
    \node[shape=circle,fill=black] (B) at (0.5,0) {};
    \node[shape=circle,fill=black] (C) at (1.2,0) {};
    \node[shape=circle,fill=black] (D) at (1.7,0) {};
    \node[shape=circle,fill=black] (E) at (2.4,0) {};
    \node[shape=circle,fill=black] (F) at (2.9,0) {} ;

     \node (G) at (0,-0.5) {$u_1$};
    \node (H) at (0.5,-0.5) {$u_1$};
    \node (I) at (1.2,-0.5) {$u_2$};
    \node (J) at (1.7,-0.5) {$u_2$};
    \node (K) at (2.4,-0.5) {$u_3$};
    \node (L) at (2.9,-0.5) {$u_3$} ;

 \draw plot [smooth, tension=2] coordinates { (0.5,0) (1.2,0) };
     \draw plot [smooth, tension=2] coordinates { (1.7,0) (2.3,0) };
     \draw plot [smooth, tension=2] coordinates { (0,0.15) (1.45,0.7) (2.9,0.15) };
     \draw plot [smooth, tension=2] coordinates { (0,0.15) (0.85,0.5) (1.7,0.15) };
     \draw plot [smooth, tension=2] coordinates { (1.2,0.15) (2.05,0.5) (2.9,0.15) };
      \draw plot [smooth, tension=2] coordinates { (0.5,0.15) (1.4,0.3) (2.3,0.15) };    
\end{tikzpicture}  
\\
 $C^1_{0}(1,0,1,0,1,0)$
    &\hspace{0.5cm}

    $C^1_{3}(1,1,1,0,1,0)$
  & \hspace{0.5cm}

  $C^1_{29}(1,1,1,1,1,1)$
\\
\\
\\
  
\begin{tikzpicture}
    \node[shape=circle,fill=black] (A) at (0,0) {};
    \node[shape=circle,draw=black] (B) at (0.6,0) {};
    \node[shape=circle,fill=black] (C) at (1.2,0) {};
    \node[shape=circle,draw=black] (D) at (2.3,0) {};
    \node[shape=circle,fill=black] (E) at (2.9,0) {} ;
    
    {\tiny
    \node (G) at (0,-0.5) {$u_1$};
    \node (H) at (0.6,-0.5) {$u_1\!,\!u_2$};
    \node (I) at (1.2,-0.5) {$u_2$};
    \node (J) at (2.3,-0.5) {$u_3$};
    \node (K) at (2.9,-0.5) {$u_3$};
    }
\end{tikzpicture}

&\hspace{0.5cm}

\begin{tikzpicture}
    \node[shape=circle,fill=black] (A) at (0,0) {};
    \node[shape=circle,draw=black] (B) at (0.6,0) {};
    \node[shape=circle,fill=black] (C) at (1.2,0) {};
    \node[shape=circle,fill=black] (D) at (2.3,0) {};
    \node[shape=circle,fill=black] (E) at (2.9,0) {} ;
    
    {\tiny
    \node (G) at (0,-0.5) {$u_1$};
    \node (H) at (0.6,-0.5) {$u_1\!,\!u_2$};
    \node (I) at (1.2,-0.5) {$u_2$};
    \node (J) at (2.3,-0.5) {$u_3$};
    \node (K) at (2.9,-0.5) {$u_3$};
    }

\draw plot [smooth, tension=2] coordinates { (0,0.15) (1.45,0.6) (2.9,0.15)};
\draw plot [smooth, tension=2] coordinates { (1.2,0)  (2.3,0)};
\end{tikzpicture}
&
\hspace{0.5cm}

\begin{tikzpicture}
    \node[shape=circle,fill=black] (A) at (0,0) {};
    \node[shape=circle,draw=black] (B) at (1,0) {};
    \node[shape=circle,fill=black] (C) at (2,0) {};
    \node[shape=circle,fill=black] (D) at (3,0) {};
    
    {\tiny
    \node (G) at (0,-0.5) {$u_1$};
    \node (H) at (1,-0.5) {$u_1\!,\!u_2,\!u_3$};
    \node (I) at (2,-0.5) {$u_2$};
    \node (J) at (3,-0.5) {$u_3$};
    }
    
\end{tikzpicture}

  \\  

  $C^2_{0}(1,0,1,1,0)$
  &\hspace{0.5cm}

 $C^2_{7}(1,0,1,1,1)$
  &\hspace{0.5cm}

 $C^4_{0}(1,0,1,1)$
 \\
\end{tabular}
\end{center}

The argument used previously, proves that the minimizer should be supported in
$C^1_{0}(1,0,1,0,1,0)$ and $C^1_{29}(1,1,1,1,1,1)$. However $C^1_{0}(1,0,1,0,1,0)$ cannot appear with positive probability since otherwise there is a configuration $C^1_j$ with $j\neq 29$ such that $C_j^1(1,1,1,1,1,1)$ appears with positive probability. Therefore, any graph attaining the minimum is a union of $P_{5,2}$.

\medskip

For $\lambda\in [\sqrt{3/5},1]$ and $\Lambda_0^*(\lambda),\Lambda_1^*(\lambda), \Lambda_2^*(\lambda)$ as in Claim~\ref{cla:34}, there are 11 configurations $C$ for which $\text{SLACK}_C(\lambda,\Lambda_0^*(\lambda), \Lambda_1^*(\lambda), \Lambda_2^*(\lambda))=0$:
\begin{center}
\begin{tabular}{ c c c}

 \begin{tikzpicture}
    \node[shape=circle,fill=black] (A) at (0,0) {};
    \node[shape=circle,draw=black] (B) at (0.5,0) {};
    \node[shape=circle,fill=black] (C) at (1.2,0) {};
    \node[shape=circle,draw=black] (D) at (1.7,0) {};
    \node[shape=circle,fill=black] (E) at (2.4,0) {};
    \node[shape=circle,draw=black] (F) at (2.9,0) {} ;
    
    \node (G) at (0,-0.5) {$u_1$};
    \node (H) at (0.5,-0.5) {$u_1$};
    \node (I) at (1.2,-0.5) {$u_2$};
    \node (J) at (1.7,-0.5) {$u_2$};
    \node (K) at (2.4,-0.5) {$u_3$};
    \node (L) at (2.9,-0.5) {$u_3$} ;
\end{tikzpicture}
&
\hspace{0.5cm}

\begin{tikzpicture}
    \node[shape=circle,fill=black] (A) at (0,0) {};
    \node[shape=circle,fill=black] (B) at (0.5,0) {};
    \node[shape=circle,fill=black] (C) at (1.2,0) {};
    \node[shape=circle,fill=black] (D) at (1.7,0) {};
    \node[shape=circle,fill=black] (E) at (2.4,0) {};
    \node[shape=circle,fill=black] (F) at (2.9,0) {} ;

     \node (G) at (0,-0.5) {$u_1$};
    \node (H) at (0.5,-0.5) {$u_1$};
    \node (I) at (1.2,-0.5) {$u_2$};
    \node (J) at (1.7,-0.5) {$u_2$};
    \node (K) at (2.4,-0.5) {$u_3$};
    \node (L) at (2.9,-0.5) {$u_3$} ;
    
\draw plot [smooth, tension=2] coordinates { (0.5,0) (1.2,0)};
\draw plot [smooth, tension=2] coordinates { (1.7,0) (2.4,0)};
\draw plot [smooth, tension=2] coordinates { (0,0.15) (1.45,0.5)  (2.9,0.15)};
\end{tikzpicture}  
&

\hspace{0.5cm}
\begin{tikzpicture}
    \node[shape=circle,fill=black] (A) at (0,0) {};
    \node[shape=circle,fill=black] (B) at (0.5,0) {};
    \node[shape=circle,fill=black] (C) at (1.2,0) {};
    \node[shape=circle,fill=black] (D) at (1.7,0) {};
    \node[shape=circle,fill=black] (E) at (2.4,0) {};
    \node[shape=circle,fill=black] (F) at (2.9,0) {} ;

     \node (G) at (0,-0.5) {$u_1$};
    \node (H) at (0.5,-0.5) {$u_1$};
    \node (I) at (1.2,-0.5) {$u_2$};
    \node (J) at (1.7,-0.5) {$u_2$};
    \node (K) at (2.4,-0.5) {$u_3$};
    \node (L) at (2.9,-0.5) {$u_3$} ;
    
\draw plot [smooth, tension=2] coordinates { (0,0.15) (0.6 ,0.4) (1.2,0.15)};
\draw plot [smooth, tension=2] coordinates { (0.5,0.15) (1.45 ,0.6) (2.4,0.15)};
\draw plot [smooth, tension=2] coordinates { (1.2,0.15) (1.8,0.4) (2.4,0.15)};
\draw plot [smooth, tension=2] coordinates { (1.7,0.15) (2.3,0.4) (2.9,0.15)};
\end{tikzpicture}  
\\
 $C^1_{0}(1,0,1,0,1,0)$
    &\hspace{0.5cm}

    $C^1_{9}(1,1,1,0,1,0)$
  & \hspace{0.5cm}

  $C^1_{16}(1,1,1,1,1,1)$
\\
\\
\\
  
\begin{tikzpicture}
    \node[shape=circle,fill=black] (A) at (0,0) {};
    \node[shape=circle,fill=black] (B) at (0.5,0) {};
    \node[shape=circle,fill=black] (C) at (1.2,0) {};
    \node[shape=circle,fill=black] (D) at (1.7,0) {};
    \node[shape=circle,fill=black] (E) at (2.4,0) {};
    \node[shape=circle,fill=black] (F) at (2.9,0) {} ;

     \node (G) at (0,-0.5) {$u_1$};
    \node (H) at (0.5,-0.5) {$u_1$};
    \node (I) at (1.2,-0.5) {$u_2$};
    \node (J) at (1.7,-0.5) {$u_2$};
    \node (K) at (2.4,-0.5) {$u_3$};
    \node (L) at (2.9,-0.5) {$u_3$} ;
    
\draw plot [smooth, tension=2] coordinates { (0,0.15) (0.85 ,0.6) (1.7,0.15)};
\draw plot [smooth, tension=2] coordinates { (1.7,0.15) (2.3 ,0.4) (2.9,0.15)};
\draw plot [smooth, tension=2] coordinates { (2.9,0.15) (1.7,0.6) (0.5,0.15)};
\draw plot [smooth, tension=2] coordinates { (0.5,0)  (1.2,0)};
\draw plot [smooth, tension=2] coordinates { (1.2,0.15) (1.8,0.4) (2.4,0.15)};
\end{tikzpicture}

&

  \hspace{0.5cm}
  
  \begin{tikzpicture}
    \node[shape=circle,fill=black] (A) at (0,0) {};
    \node[shape=circle,fill=black] (B) at (0.5,0) {};
    \node[shape=circle,fill=black] (C) at (1.2,0) {};
    \node[shape=circle,fill=black] (D) at (1.7,0) {};
    \node[shape=circle,fill=black] (E) at (2.4,0) {};
    \node[shape=circle,fill=black] (F) at (2.9,0) {} ;

     \node (G) at (0,-0.5) {$u_1$};
    \node (H) at (0.5,-0.5) {$u_1$};
    \node (I) at (1.2,-0.5) {$u_2$};
    \node (J) at (1.7,-0.5) {$u_2$};
    \node (K) at (2.4,-0.5) {$u_3$};
    \node (L) at (2.9,-0.5) {$u_3$} ;

 \draw plot [smooth, tension=2] coordinates { (0.5,0) (1.2,0) };
     \draw plot [smooth, tension=2] coordinates { (1.7,0) (2.3,0) };
     \draw plot [smooth, tension=2] coordinates { (0,0.15) (1.45,0.7) (2.9,0.15) };
     \draw plot [smooth, tension=2] coordinates { (0,0.15) (0.85,0.5) (1.7,0.15) };
     \draw plot [smooth, tension=2] coordinates { (1.2,0.15) (2.05,0.5) (2.9,0.15) };
      \draw plot [smooth, tension=2] coordinates { (0.5,0.15) (1.4,0.3) (2.3,0.15) };    
\end{tikzpicture}  
&

  \hspace{0.5cm}
  
\begin{tikzpicture}
    \node[shape=circle,fill=black] (A) at (0,0) {};
    \node[shape=circle,draw=black] (B) at (0.6,0) {};
    \node[shape=circle,fill=black] (C) at (1.2,0) {};
    \node[shape=circle,fill=black] (D) at (2.3,0) {};
    \node[shape=circle,draw=black] (E) at (2.9,0) {} ;
    
    {\tiny
    \node (G) at (0,-0.5) {$u_1$};
    \node (H) at (0.6,-0.5) {$u_1\!,\!u_2$};
    \node (I) at (1.2,-0.5) {$u_2$};
    \node (J) at (2.3,-0.5) {$u_3$};
    \node (K) at (2.9,-0.5) {$u_3$};
    }
\end{tikzpicture}

  \\  

  $C^1_{24}(1,1,1,1,1,1)$
  &\hspace{0.5cm}

 $C^1_{29}(1,1,1,1,1,1)$
  &\hspace{0.5cm}

 $C^2_{0}(1,0,1,1,0)$
 \\
\\
\\

\begin{tikzpicture}
    \node[shape=circle,draw=black] (A) at (0,0) {};
    \node[shape=circle,fill=black] (B) at (0.6,0) {};
    \node[shape=circle,draw=black] (C) at (1.2,0) {};
    \node[shape=circle,draw=black] (D) at (2.3,0) {};
    \node[shape=circle,draw=black] (E) at (2.9,0) {} ;
    
    {\tiny
    \node (G) at (0,-0.5) {$u_1$};
    \node (H) at (0.6,-0.5) {$u_1\!,\!u_2$};
    \node (I) at (1.2,-0.5) {$u_2$};
    \node (J) at (2.3,-0.5) {$u_3$};
    \node (K) at (2.9,-0.5) {$u_3$};
    }
\end{tikzpicture}
&

\hspace{0.5cm}

\begin{tikzpicture}
    \node[shape=circle,fill=black] (A) at (0,0) {};
    \node[shape=circle,draw=black] (B) at (1,0) {};
    \node[shape=circle,fill=black] (C) at (2,0) {};
    \node[shape=circle,fill=black] (D) at (3,0) {};
    
    {\tiny
    \node (G) at (0,-0.5) {$u_1$};
    \node (H) at (1,-0.5) {$u_1\!,\!u_2$};
    \node (I) at (2,-0.5) {$u_2\!,\!u_3$};
    \node (J) at (3,-0.5) {$u_3$};
    }
    
\end{tikzpicture}

&
\hspace{0.5cm}

\begin{tikzpicture}
    \node[shape=circle,draw=black] (A) at (0,0) {};
    \node[shape=circle,fill=black] (B) at (1,0) {};
    \node[shape=circle,draw=black] (C) at (2,0) {};
    \node[shape=circle,draw=black] (D) at (3,0) {};
    
    {\tiny
    \node (G) at (0,-0.5) {$u_1$};
    \node (H) at (1,-0.5) {$u_1\!,\!u_2,\!u_3$};
    \node (I) at (2,-0.5) {$u_2$};
    \node (J) at (3,-0.5) {$u_3$};
    }
\end{tikzpicture}
\\  

  $C^2_{0}(0,1,0,0,0)$
  &\hspace{0.5cm}

$C^3_{0}(0,1,0,0)$
  &\hspace{0.5cm}

 $C^4_{0}(1,0,1,1)$
 \\
\\
\\

\begin{tikzpicture}
    \node[shape=circle,fill=black] (A) at (0,0) {};
    \node[shape=circle,draw=black] (B) at (1.45,0) {};
    \node[shape=circle,draw=black] (C) at (2.9,0) {};
    
    {\tiny
    \node (G) at (0,-0.5) {$u_1\!,\! u_2$};
    \node (H) at (1.45,-0.5) {$u_1\!,\!u_3$};
    \node (I) at (2.9,-0.5) {$u_2\!,\!u_3$};
    }
    
\end{tikzpicture}

&
    
  \hspace{0.5cm}
\begin{tikzpicture}
    \node[shape=circle,draw=black] (A) at (0,0) {};
    \node[shape=circle,draw=black] (B) at (1.45,0) {};
    \node[shape=circle,fill=black] (C) at (2.9,0) {};
    
    {\tiny
    \node (G) at (0,-0.5) {$u_1$};
    \node (H) at (1.45,-0.5) {$u_1\!,\!u_2,\!,\!u_3$};
    \node (I) at (2.9,-0.5) {$u_2\!,\!u_3$};
    }
    
\end{tikzpicture}
&
\\
 $C^5_{0}(1,0,0)$
  &\hspace{0.5cm}

$C^6_{0}(0,0,1)$
  &   
 
\end{tabular}
\end{center}
We can discard the configurations $C^2_{0}(1,0,1,1,0)$, $C^2_{0}(0,1,0,0,0)$, $C^3_{0}(0,1,0,0)$, $C^4_{0}(1,0,1,1)$,  $C^5_{0}(1,0,0)$ and $C^6_{0}(0,0,1)$ using the same argument as before. 

Suppose that $C_{24}^1(1,1,1,1,1,1)$ appears with positive probability.  Let $v$ be a vertex such that there exists an $I\in \mathcal I(G)$ that induces the configuration $C_{24}^1(1,1,1,1,1,1)$ in its second neighborhood. As usual, let $w_{ij}$ for $i\in\{1,2,3\}$ and $j\in\{1,2\}$, denote the second neighbors of $v$, and let $W=\{w_{ij}|\; i\in\{1,2,3\},\, j\in\{1,2\}\}$. The vertices $w_{11}$ and $w_{31}$ have degree $1$ in $G[W]$. The other vertices in $W$ have degree $2$ in $G[W]$ and thus have no neighbor at distance $3$ from $v$. Let $z$ be unique neighbor of $w_{11}$ at distance $3$ from $v$. If $z$ is also a neighbor of $w_{31}$, then the independent set $I=\{z\}$ induces a configuration equivalent up to symmetries to  $C_{10}^1(1,1,1,0,1,0)$. If $z$ is not a neighbor of $w_{31}$ then  the independent set $I=\{z\}$ induces a configuration equivalent up to symmetries to  $C_{18}^1(1,1,1,1,1,0)$. In both cases we obtain a contradiction since these two configurations appear with probability $0$.

A similar argument also discards the configurations $C_{16}^1(1,1,1,1,1,1)$ and $C_{9}^1(1,1,1,1,1,1)$.

Finally, we can discard $C_{0}^1(1,0,1,0,1,0)$ in the same way we did it in the case where $\lambda\in[11/20,\sqrt{3/5}]$.
We conclude that any graph attaining the minimum is a union of $P_{5,2}$.

\end{document}